\crefname{equation}{}{}
\crefname{figure}{Figure}{Figures}
\crefname{assumption}{Assumption}{Assumptions}
\crefname{condition}{Condition}{Conditions}
\setlist[enumerate]{leftmargin=*,wide=0em, noitemsep,nolistsep, label = {\bfseries \arabic*.}}
\setlist[itemize]{leftmargin=*,wide=0em, noitemsep,nolistsep}
\newif\iflongversion
\renewcommand\th{\textsuperscript{th}\xspace}
\newcommand\tsup[2][2]{%
	\def\useanchorwidth{T}%
	\ifnum#1>1%
	\stackon[-.5pt]{\tsup[\numexpr#1-1\relax]{#2}}{\scriptscriptstyle\sim}%
	\else%
	\stackon[.5pt]{#2}{\scriptscriptstyle\sim}%
	\fi%
}
\newcommand{\longdash}[1][2em]{%
	\makebox[#1]{$\m@th\smash-\mkern-7mu\cleaders\hbox{$\mkern-2mu\smash-\mkern-2mu$}\hfill\mkern-7mu\smash-$}}
\newcommand{\omitskip}{\kern-\arraycolsep}
\newcommand{\df}{\mathrm{d}}
\newcommand{\real}{\mathbb{R}}
\DeclareMathOperator*{\argmin}{arg\,min}
\DeclareMathOperator*{\Argmin}{Arg\,min}
\newcommand{\sC}{\mathcal{C}}
\newcommand{\sX}{\mathcal{X}}
\renewcommand {\AA}  { {\mathbf{A}} }
\newcommand {\BB}  { {\mathbf{B}} }
\newcommand {\DD}  { {\mathbf{D}} }
\newcommand {\EE}  { {\mathbf{E}} }
\newcommand {\FF}  { {\mathbf{F}} }
\newcommand {\HH}  { {\mathbf{H}} }
\newcommand {\VV}  { {\mathbf{V}} }
\newcommand {\WW}  { {\mathbf{W}} }
\newcommand {\UU}  { {\mathbf{U}} }
\newcommand {\QQ}  { {\mathbf{Q}} }
\newcommand {\bSS}  { {\mathbf{S}} }
\newcommand {\JJ}  { {\mathbf{J}} }
\newcommand{\eye}{\mathbf{I}}
\renewcommand {\aa}  { {\bf a} }
\newcommand {\bb}  { {\bf b} }
\newcommand {\bgg}  { {\bf g} }
\newcommand {\yy}  { {\bf y} }
\newcommand {\rr}  { {\bf r} }
\newcommand {\uu}  { {\bf u} }
\newcommand {\qq}  { {\bf q} }
\newcommand {\pp}  { {\bf p} }
\newcommand {\vv}  { {\bf v} }
\newcommand {\ww}  { {\bf w} }
\newcommand {\xx}  { {\bf x} }
\newcommand {\zz}  { {\bf z} }
\newcommand {\zero}  { {\bf 0} }
\newcommand {\one}  { {\bf 1} }
\newcommand {\alphak}  { {{\alpha}_{k}} }
\newcommand {\UUk}  { {\UU_{k}} }
\newcommand {\HHk}  { {\HH_{k}} }
\newcommand {\bggk}  { {{\bgg}_{k}} }
\newcommand {\bggkk}  { {{\bgg}_{k+1}} }
\newcommand {\xxo}  { {{\xx}_{0}} }
\newcommand {\xxk}  { {{\xx}_{k}} }
\newcommand {\xxkk}  { {{\xx}_{k+1}} }
\newcommand {\xxs}  { {{\xx}^{\star}} }
\newcommand {\pps}  { {{\pp}^{\star}} }
\newcommand {\ppk}  { {{\pp}_{k}} }
\newcommand {\range}  { {\textnormal{Range}} }
\newcommand*{\transpose}{%
	{\mathpalette\@transpose{}}%
}
\newcommand*{\@transpose}[2]{%
	\raisebox{\depth}{$\m@th#1\intercal$}%
}
\newcommand {\ppkt}  { {\pp_{k}^{(t)}} }
\newcommand{\hf}{\frac12}
\newcommand{\defeq}{\triangleq}
\definecolor{forestgreen}{rgb}{0.13, 0.55, 0.13}
\definecolor{amber}{rgb}{1.0, 0.75, 0.0}
\definecolor{bananayellow}{rgb}{.8, 0.6, 0}
\newcounter{comment}\setcounter{comment}{0}
\newmdtheoremenv[%
linewidth = 1pt,%
roundcorner = 10pt,%
leftmargin = 0,%
rightmargin = 0,%
backgroundcolor = green!3,%
outerlinecolor = blue!70!black,%
splittopskip = \topskip,%
ntheorem = true,%
]{theorem}{Theorem}
\newmdtheoremenv[%
linewidth = 1pt,%
roundcorner = 10pt,%
leftmargin = 0,%
rightmargin = 0,%
backgroundcolor = green!3,%
outerlinecolor = blue!70!black,%
splittopskip = \topskip,%
ntheorem = true,%
]{corollary}{Corollary}
\newmdtheoremenv[%
linewidth = 1pt,%
roundcorner = 10pt,%
leftmargin = 0,%
rightmargin = 0,%
backgroundcolor = green!3,%
outerlinecolor = blue!70!black,%
splittopskip = \topskip,%
ntheorem = true,%
]{lemma}{Lemma}
\newmdtheoremenv[%
linewidth = 1pt,%
roundcorner = 10pt,%
leftmargin = 0,%
rightmargin = 0,%
backgroundcolor = blue!3,%
outerlinecolor = blue!70!black,%
splittopskip = \topskip,%
ntheorem = true,%
]{definition}{Definition}
\newmdtheoremenv[%
linewidth = 1pt,%
roundcorner = 10pt,%
leftmargin = 0,%
rightmargin = 0,%
backgroundcolor = green!3,%
outerlinecolor = blue!70!black,%
splittopskip = \topskip,%
ntheorem = true,%
]{proposition}{Proposition}
\newmdtheoremenv[%
linewidth = 1pt,%
roundcorner = 10pt,%
leftmargin = 0,%
rightmargin = 0,%
backgroundcolor = green!3,%
outerlinecolor = blue!70!black,%
splittopskip = \topskip,%
ntheorem = true,%
]{condition}{Condition}
\newmdtheoremenv[%
linewidth = 1pt,%
roundcorner = 10pt,%
leftmargin = 0,%
rightmargin = 0,%
backgroundcolor = yellow!3,%
outerlinecolor = blue!70!black,%
splittopskip = \topskip,%
ntheorem = true,%
]{assumption}{Assumption}
\theoremstyle{definition}
\newmdtheoremenv[%
linewidth = 1pt,%
roundcorner = 10pt,%
leftmargin = 0,%
rightmargin = 0,%
backgroundcolor = cyan!3,%
outerlinecolor = blue!70!black,%
splittopskip = \topskip,%
ntheorem = true,%
]{example}{Example}
\theoremstyle{definition}
\newmdtheoremenv[%
linewidth = 1pt,%
roundcorner = 10pt,%
leftmargin = 0,%
rightmargin = 0,%
backgroundcolor = red!3,%
outerlinecolor = blue!70!black,%
splittopskip = \topskip,%
ntheorem = true,%
]{remark}{Remark}
\NewDocumentCommand\DownArrow{O{2.0ex} O{black}}{%
	\mathrel{\tikz[baseline] \draw [<-, line width=0.5pt, #2] (0,0) -- ++(0,#1);}
}
\definecolor{mygreen}{rgb}{0,0.6,0}
\definecolor{mygray}{rgb}{0.5,0.5,0.5}
\definecolor{mymauve}{rgb}{0.58,0,0.82}
\definecolor{codegreen}{rgb}{0,0.6,0}
\definecolor{codegray}{rgb}{0.5,0.5,0.5}
\definecolor{codepurple}{rgb}{0.58,0,0.82}
\definecolor{backcolour}{rgb}{0.95,0.95,0.92}
\lstdefinestyle{mystyle}{
	backgroundcolor=\color{backcolour},   
	commentstyle=\color{codegreen},
	keywordstyle=\color{magenta},
	numberstyle=\tiny\color{codegray},
	stringstyle=\color{codepurple},
	basicstyle=\ttfamily\footnotesize,
	breakatwhitespace=false,         
	breaklines=true,                 
	captionpos=b,                    
	keepspaces=true,                 
	numbers=left,                    
	numbersep=5pt,                  
	showspaces=false,                
	showstringspaces=false,
	showtabs=false,                  
	tabsize=2
}
\newcommand*\dotprod[1]{\left\langle #1\right\rangle}
\newcommand*\vnorm[1]{\left\| #1\right\|}
\newcommand*\bigO[1]{\mathcal O\left( #1\right)}
\newcommand\bbR{\ensuremath{\mathbb{R}}} 
\newcommand*\indicfun[1]{\mathds{1}_{\left\{#1\right\}}}
\begin{document}
	
	\title{Newton-MR: Inexact Newton Method With Minimum Residual Sub-problem Solver}
	
	\author{Fred Roosta\thanks{School of Mathematics and Physics, University of Queensland, Australia, The Centre for Information Resilience (CIRES), Australia, and International Computer Science Institute, Berkeley, USA. Email:  \tt{fred.roosta@uq.edu.au}} 
		\and 
		Yang Liu\thanks{School of Mathematics and Physics, University of Queensland, Australia. Email: \tt{yang.liu2@uq.edu.au}}
		\and
		Peng Xu\thanks{Institute for Computational and Mathematical Engineering, Stanford University, USA. Email: \tt{pengxu@stanford.edu}}
		\and
		Michael W. Mahoney\thanks{International Computer Science Institute and Department of Statistics, University of California at Berkeley, USA. Email: \tt{mmahoney@stat.berkeley.edu}}
	}
	
	\date{\today}
	\maketitle

\abstract{
We consider a variant of inexact Newton Method \cite{eisenstat1994globally,brown1994convergence}, called Newton-MR, in which the least-squares sub-problems are solved approximately using Minimum Residual method \cite{paige1975solution}. 
By construction, Newton-MR can be readily applied for unconstrained optimization of a class of non-convex problems known as invex, which subsumes convexity as a sub-class. 
For invex optimization, instead of the classical Lipschitz continuity assumptions on gradient and Hessian, Newton-MR's global convergence can be guaranteed under a weaker notion of joint regularity of Hessian and gradient. 
We also obtain Newton-MR's problem-independent local convergence to the set of minima. 
We show that fast local/global convergence can be guaranteed under a novel inexactness condition, which, to our knowledge, is much weaker than the prior related works. 
Numerical results demonstrate the performance of Newton-MR as compared with several other Newton-type alternatives on a few machine learning problems.
}

\section{Introduction}
\label{sec:intro}
For a twice differentiable function $f:\mathbb{R}^{d} \rightarrow \mathbb{R}$, consider the unconstrained optimization problem
\begin{align}
	\label{eq:obj}
	\min_{\xx \in \mathbb{R}^{d}} f(\xx).
\end{align}
The canonical example of second-order methods is arguably the classical Newton's method whose iterations are typically written as 
\begin{align}
	\label{eq:newton_iterations}
	\xxkk = \xxk + \alphak \ppk, \quad \text{ with } \quad \ppk = -\left[\HH(\xxk)\right]^{-1} \bgg(\xxk),
\end{align}
where $ \xxk$, $\bgg(\xxk) = \nabla f(\xxk)$, $\HH(\xxk) = \nabla^{2} f(\xxk)$, and $\alphak $ are respectively the current iterate, the gradient,  the Hessian matrix, and the step-size that is often chosen using an Armijo-type line-search to enforce sufficient decrease in $ f $ \cite{nocedal2006numerical}. When $ f $ is smooth and strongly convex, it is well known that the local and global convergence rates of the classical Newton's method are, respectively, quadratic and linear \cite{nesterov2004introductory,boyd2004convex,nocedal2006numerical}. 
For such problems, the Hessian matrix is uniformly positive definite. As a result, if forming the Hessian matrix explicitly and/or solving the linear system in \cref{eq:newton_iterations} exactly is prohibitive, the update direction $ \ppk $ is obtained approximately using conjugate gradient (CG), resulting in the celebrated Newton-CG \cite{nocedal2006numerical}. 

Newton-CG is elegantly simple in that its iterations merely involve solving linear systems followed by a certain type of line-search. Despite its simplicity, it has been shown to enjoy various desirable theoretical and algorithmic properties including insensitivity to problem ill-conditioning~\cite{ssn2018,xu2016sub} as well as robustness to hyper-parameter tuning~\cite{kylasa2018gpu,berahas2017investigation}. However, in the absence of strong-convexity, Newton's method lacks any favorable convergence analysis. Indeed, recall that to obtain the update direction, $ \ppk $, Newton-CG aims at (approximately) solving quadratic sub-problems of the form (\cite{nocedal2006numerical})
\begin{align}
	\label{eq:quadratic}
	\min_{\pp \in \real^{d}} \dotprod{\bgg(\xxk), \pp}  + \hf \dotprod{ \pp, \HH(\xxk) \pp}.
\end{align}
However, if the Hessian is indefinite (as in non-convex settings) or if its positive semi-definite but $\bgg(\xxk) \notin \text{Range}\left(\HH(\xxk)\right)$ (as in weakly convex problems), \cref{eq:quadratic} is simply unbounded below. 

In this light, many Newton-type variants have been proposed which aim at extending Newton-CG beyond strongly convex problems to more general settings, e.g., trust-region \cite{conn2000trust} and cubic regularization \cite{nesterov2006cubic,cartis2011adaptiveI,cartis2011adaptiveII}. However, many of these extensions involve sub-problems that are themselves non-trivial to solve, e.g., the sub-problems of trust-region and cubic regularization methods are themselves non-linear and (potentially) non-convex \cite{xuNonconvexEmpirical2017,xuNonconvexTheoretical2017}. 
This has prompted many authors to design effective methods for approximately solving such sub-problems, e.g., \cite{steihaug1983conjugate,curtis2021trust,gould1999solving, lenders2018trlib,cartis2011adaptiveI,carmon2016gradient,bianconcini2015use}. These methods can be highly effective in many settings. For example, the trust-region method coupled with CG-Steihaug \cite{steihaug1983conjugate} has shown great promise in several machine learning applications \cite{xuNonconvexEmpirical2017}. However, solving such non-trivial sub-problems can be a major challenge in many other settings. For instance, despite the obvious advantages of the CG-Steihaug method, e.g., one matrix-vector product per iteration and the ability to naturally detect negative curvature directions, it is not guaranteed to solve the trust-region sub-problem to an arbitrary accuracy. This can have negative consequences for the convergence speed of the trust-region method. Indeed, if either of the negative curvature or the boundary is encountered too early, the CG-Steihaug method terminates and the resulting step is only slightly, if at all, better than the Cauchy direction \cite{conn2000trust}. If this occurs too often, the trust-region method's convergence can slow down to be comparable to that of the simple gradient descent method.

Recently, \cite{mishchenko2021regularized} proposes a novel Newton-type method, which not only enjoys sub-problems in the form of linear-systems, but as long as these sub-problems are solved exactly, it has provably fast global convergence for weakly convex objectives. 
To allow for optimization of a larger class of problems beyond convex, while maintaining the simplicity offered by linear sub-problems, one can consider replacing \cref{eq:quadratic} with an equivalently simple ordinary least-squares (OLS)
\begin{align}
	\label{eq:ols}
	\min_{\pp \in \real^{d}} \; \hf \vnorm{\HH(\xxk) \pp + \bgg(\xxk)}^{2}.
\end{align}
Clearly, regardless of the Hessian matrix, there is always a solution (in fact at times infinitely many solutions) to the the above OLS problem. For example, exact minimum norm solution to the above OLS amounts to iterations of the form
\begin{align}
	\label{eq:newton_mr_iterations}
	\xxkk = \xxk + \alphak \ppk, \quad \text{ with } \quad \ppk = -\left[\HH(\xxk)\right]^{\dagger} \bgg(\xxk),
\end{align}   
where $ \AA^{\dagger} $ denotes the Moore-Penrose generalized inverse of matrix $ \AA $, and $ \alphak $ is an appropriately chosen step-size. 
Our aim in this paper is to give an in-depth treatment of the above alternative and study the convergence properties of the resulting algorithm, which we call \emph{Newton-MR}\footnote{The term ``MR'' refers to the fact that the sub-problems are in the form of \emph{\textbf{M}inimum \textbf{R}esidual}, i.e., least squares, as depicted in \cref{alg:NewtonMR_Ex,alg:NewtonMR}.}, under a variety of settings.

The rest of this paper is organized as follows. We end \cref{sec:intro} by introducing the notation used throughout the paper. 
In \cref{sec:newton_mr}, we take a look at high-level consequences of using a direction from (an approximation of) \cref{eq:ols}.
Convergence properties of Newton-MR are gathered in \cref{sec:analysis}. This is done, first, by extensive discussion on various assumptions underlying the analysis in \cref{sec:assumption}, followed by a detailed theoretical development to obtain local and global convergence of Newton-MR in \cref{sec:convergence}. Numerical examples are provided in \cref{sec:experiments}. Conclusions and further thoughts are gathered in \cref{sec:conclusion}.

\paragraph{Notation.}
In what follows, vectors and matrices are denoted by bold lowercase and bold uppercase letters, e.g., $\vv$ and $\VV$, respectively. We use regular lowercase and uppercase letters to denote scalar constants, e.g., $ c $  or $ L $. 
The transpose of a real vector $\vv$ is denoted by $ \vv^{\intercal} $. 
For two vectors, $ \vv,\ww $, their inner-product is denoted as $ \dotprod{\vv, \ww}  = \vv^{\intercal} \ww$. For a vector $\vv$, and a matrix $\VV$, $\|\vv\|$ and $\|\VV\|$ denote the vector $\ell_{2}$ norm and the matrix spectral norm, respectively. 
For any $ \xx,\zz \in \mathbb{R}^{d}$, $ \yy \in \left[\xx, \zz\right] $ denotes $ \yy = \xx + \tau (\zz - \xx) $ for some $ 0 \leq \tau \leq 1 $. 
For a natural number $ n $, we denote  $ [n] = \left\{ 1,2,\ldots,n \right\} $. 
For any finite collection $ \mathcal{S} $, its cardinality is denoted by $ |\mathcal{S}| $. 
The subscript, e.g., $\xxk$, denotes iteration counter. 
For a matrix $ \AA \in \real^{d \times d} $, $ \text{Range}\left(\AA\right) $ and $ \text{Null}\left(\AA\right) $ denote, respectively, its range, i.e., $ \text{Range}\left(\AA\right) = \left\{\AA \xx \mid \xx \in \real^{d} \right\}$ and its null-space, i.e., $ \text{Null}\left(\AA\right) = \left\{\yy \in \real^{d} \mid \AA \yy  = 0 \right\}$. 
The Moore-Penrose generalized inverse of a matrix $ \AA $ is denoted by $ \AA^{\dagger} $. 
The identity matrix is written as $\eye$ and the indicator function of a set $ \mathcal{S} $ is denoted as $ \indicfun{\mathcal{S}} $. 
When the minimum is attained at more than one point, ``$ \Argmin $'' denotes the set of minimizers, otherwise ``$ \argmin $'' implies a unique minimizer. 
Finally, we use $\bgg(\xx) \triangleq \nabla f\left(\xx\right)$ and $\HH(\xx) \triangleq \nabla^{2} f\left(\xx\right) $ for the gradient and the Hessian of $f$ at $\xx$, respectively, and at times we drop the dependence on $ \xx $ by simply using $ \bgg $ and $ \HH $, e.g., $ \bggk = \bgg(\xxk) $ and $ \HHk = \HH(\xxk) $.

\section{Newton-MR}
\label{sec:newton_mr}
Before providing a detailed look into the convergence implications of using \cref{eq:ols}, let us first study higher-level consequences of such a choice as they relate to solving \cref{eq:obj}.

\subsection{Minimum Residual Sub-problem Solver}
\label{sec:minres-qlp}
For solving OLS problems, a plethora of iterative solvers exists.
Each of these iterative methods can be highly effective once applied to the class of problems for which they have been designed.
Among them, the one method which has been specifically designed to effectively obtain the least-norm solution for all systems, compatible or otherwise, involving real symmetric but potentially indefinite/singular matrices (such as those in \cref{eq:ols}) is MINRES-QLP \cite{choi2014algorithm, choi2011minres}. In fact, by requiring only marginally more computations/storage, it has been shown that MINRES-QLP is numerically more stable than MINRES on ill-conditioned problems.

Perhaps a natural question at this stage is ``\emph{what are the descent properties of the (approximate) solution to \cref{eq:ols} using MINRES-QLP?}'' Recall that the $ t\th $ iteration of MINRES-QLP yields a search direction, $ \pp_{k}^{(t)} $, which satisfies
\begin{align}
	\label{eq:minres_sub}
	\pp_{k}^{(t)} = \argmin_{\pp \in \mathcal{K}_{t}(\HHk, \bggk)}  r_{k}(\pp) \triangleq \vnorm{\bggk + \HHk \pp},
\end{align}
where $ \mathcal{K}_{t}(\HHk, \bggk) = \text{Span}\{\bggk,\HHk \bggk, \ldots, \left[\HHk\right]^{t-1} \bggk \}$ denotes the Krylov sub-space of order $ t $, \cite{golub2013matrix}.
Since $ \bm{0} \in \mathcal{K}_{t}(\HHk, \bggk)$, it follows that for any $ t $ in \cref{eq:minres_sub}, we have $ r_{k}(\pp_{k}^{(t)}) \leq r_{k}(\bm{0}) $, which implies $$\dotprod{\pp_{k}^{(t)},\HHk \bggk} \leq - \vnorm{\HHk \pp_{k}^{(t)}}^{2}/2 \leq 0.$$ 
In other words, $ \pp_{k}^{(t)} $ from MINRES, for all $ t $, is a descent direction for $ \vnorm{\bgg(\xx)} $ at $ \xxk $. 
Let $ \ppk = \ppkt $ for some $ t \geq 1 $. As a consequence of such particular descent property of $ \ppk $, the step-size, $ \alphak $, can be chosen using Armijo-type line search \cite{nocedal2006numerical} to reduce the norm of the gradient. Hence, we chose $\alphak \leq 1$ as the largest value such that for some $0< \rho < 1$, we get
\begin{align}
	\label{eq:armijo_gen}
	\vnorm{{\bggkk}}^{2} &\leq \vnorm{\bggk}^{2} + 2 \rho \alphak \dotprod{\ppk, \HHk \bggk}.
\end{align}
This can be approximately achieved using any standard back-tracking line-search strategy. 

\subsection{Invexity}
\label{sec:invexity}
From the above discussion, we always have $ \dotprod{\ppk, \HHk \bggk} \leq 0 $, which implies that 
\begin{align}
	\label{eq:grad_descent}
	\vnorm{{\bggkk}} &\leq \vnorm{\bggk}.
\end{align}
In other words, by appropriate application of Hessian and regardless of non-convexity of the problem, one can always obtain descent directions corresponding to the auxiliary problem 
\begin{align}
	\label{eq:obj_grad}
	\min_{\xx \in \mathbb{R}^{d}} \vnorm{\bgg(\xx)}.
\end{align}
In certain applications, e.g., chemical physics \cite{mciver1972structure,angelani2000saddles} and deep learning \cite{frye2019numerically,frye2021critical,frye2019critical}, instead of minimizing the function, the goal is to find zeros of its gradient field, which can obtained by solving \cref{eq:obj_grad}. However, when the goal is solving \cref{eq:obj}, a natural question is ``\emph{what is the class of objective functions for which \cref{eq:obj_grad} is equivalent to \cref{eq:obj}?}''
Clearly, any global optimum of \cref{eq:obj} is also a solution to \cref{eq:obj_grad}. However, the converse only holds for a special class of non-convex functions, known as \emph{invex} \cite{mishra2008invexity,hanson1981sufficiency}. 
\begin{definition}[Invexity]
	\label{def:invex}
	Let $\mathcal{X} \subseteq \real^d$ be an open set. A differentiable function $f: \mathcal{X} \rightarrow \bbR$ is said to be invex on $ \mathcal{X} $ if there exists a vector-valued function $\bm{\phi}: \mathcal{X} \times \mathcal{X} \rightarrow \real^d$ such that
	\begin{align}
		\label{eq:invex}
		f(\yy) - f(\xx) \geq \dotprod{\bm{\phi}(\yy, \xx),\bgg(\xx)}, \hspace{1cm} \forall \xx, \yy \in \mathcal{X}.
	\end{align}
	The class of functions satisfying \cref{eq:invex} with the same $ \bm{\phi} $ is denoted by $ \mathcal{F}_{\bm{\phi}} $. The class of all invex functions is denoted by $ \mathcal{F} $.
\end{definition}
Invexity characterizes the class of functions for which the first order optimality condition is also sufficient.  It is also easily seen that, by choosing $\bm{\phi}(\yy,\xx) = \yy-\xx$ in \cref{def:invex}, differentiable convex functions are subsumed in the class of invex functions; see  \cite{mishra2008invexity,ben1986invexity} for detailed treatments of invexity. 
%
Much like all generalized convex functions \cite{cambini2008generalized}, invexity has been introduced in order to weaken, as much as possible, the convexity requirements in optimization problems, and extend many results related to convex optimization theory to more general functions. 

Invexity has recently garnered attention in the machine learning community. Restricted to a special subclass, often referred to as Polyak-\L{}ojasiewicz functions (cf.\ \cref{sec:gpl}), numerous works have established the convergence of first-order optimization algorithms, e.g.,  \cite{karimi2016linear,bassily2018exponential,oymak2019overparameterized,dereich2021convergence,kim2021convergence,vaswani2019fast}. In addition, it has recently been shown that several deep learning models enjoy such invex properties in certain regimes, e.g., \cite{liu2022loss,barboni2021global}. For more general non-convex machine learning models, \cite{crane2021invexifying} proposes a novel regularization framework, which ensures the invexity of the regularized function and is shown to be advantageous over the classical $ \ell_{2} $-regularization in terms of the sensitivity to hyper-parameter tuning and out-of-sample generalization performance.

\subsection{Connections to Root Finding Algorithms}
\label{sec:root}
Considering \cref{eq:obj_grad} in lieu of $ \bgg(\xx) = 0 $ is, in fact, a special case of a more general framework for solving non-linear system of equations involving a vector valued function $ \mathbf{F}: \real^{m} \rightarrow \real^{n}$. More specifically, as an alternative to solving $ \mathbf{F}(\xx) = 0 $, minimization of $ \|\mathbf{F}(\xx)\| $ has been considered extensively in the literature; e.g.,  \cite{polyak2017solving,chen1994newton,nesterov2007modified,yuan2011recent,bellavia2010convergence,zhao2016global,bellavia2018levenberg}. Consequently, Newton-MR can be regarded as a member of the class of inexact Newton methods with line-search for solving nonlinear systems \cite{eisenstat1994globally,eisenstat1996choosing,morini1999convergence,eisenstat1994globally,dembo1982inexact}. In our case, the non-linear system of equations arises from optimality condition $ \bgg(\xx) = 0 $, i.e., $ \mathbf{F}(\xx) = \bgg(\xx) $, which has a symmetric Jacobian. Hence, through the perspective of the application of MINRES within its iterations, Newton-MR can be viewed as a special case of more general Newton-GMRES algorithms for solving non-linear systems \cite{brown1994convergence,brown1990hybrid,bellavia2000hybrid,bellavia2001globally,an2007globally,kelley1995iterative}. 

In light of these connections, the bare-bones iterations of Newton-MR do not constitute novel elements of our work here. 
What sets our contributions in this paper apart is studying the plethora of desirable theoretical and algorithmic properties of Newton-MR in the context of \cref{eq:obj_grad} and its connection to \cref{eq:obj}. In particular, we give a thorough treatment of the implications of the assumptions that we make as well as the novel inexactness conditions that we propose as they relate to convergence of Newton-MR. For example, in most prior works on Newton-GMRES, the Jacobian of the non-linear function is assumed full-rank \cite{brown1994convergence,bellavia2001globally,bellavia2000hybrid,walker1990least}. This, in our setting here, amounts to assuming that the Hessian matrix is invertible, which can be easily violated in the non-convex settings. In fact, to address rank deficient cases, the ubiquitous approach has been to employ regularization techniques such as Levenberg-Marquardt \cite{fan2005quadratic,behling2019local,behling2012unified,dennis1996numerical,gratton2007approximate,yamashita2001rate,gratton2007approximate,dennis1996numerical}. By introducing a novel assumption on the interplay between the gradient and the sub-space spanned by the Hessian matrix, we obviate the need for full-rank assumptions or the Hessian regularization techniques. As another example, to guarantee convergence, we will introduce a novel inexactness condition that constitute a major relaxation from the typical relative residual conditions used, almost ubiquitously, in prior works.

\section{Theoretical Analysis}
\label{sec:analysis}
In this section, we study the convergence properties of some variants of Newton-MR (\cref{alg:NewtonMR_Ex,alg:NewtonMR}). For this, in \cref{sec:assumption}, we first give the assumptions underlying our analysis. 
Under these assumptions, in \cref{sec:convergence}, we  provide detailed local/global convergence analysis of these Newton-MR variants. 

\subsection{Assumptions}
\label{sec:assumption}
For the theoretical analysis of Newton-MR, we make the following blanket assumptions regarding the properties of the objective function $ f $ in \cref{eq:obj}. Some of these assumptions might seem unconventional at first, however, they are simply generalizations of many typical assumptions made in the similar literature. For example, a strongly-convex function with Lipschitz continuous gradient and Hessian satisfies all of the following assumptions in this Section. 

\begin{assumption}[Differentiability]
	\label{assmpt:diff}
	The function $ f $ is twice-differentiable. 
\end{assumption}
In particular, all the first partial derivatives are themselves differentiable, but the second partial derivatives are allowed to be discontinuous. Recall that requiring the first partials be differentiable implies the equality of crossed-partials, which amounts to the symmetric Hessian matrix \cite[pp.\ 732-733]{hubbard2015vector}. 

In the literature for the analysis of non-convex Newton-type methods for \cref{eq:obj}, to obtain \emph{non-asymptotic quantitative} convergence rates, it is typically assumed that the function is sufficiently smooth in that its gradient and Hessian are Lipschitz continuous. Specifically, for some $ 0 \leq L_{\bgg} < \infty$, $0\leq L_{\HH} < \infty $, and $ \forall ~\xx, \yy \in \real^{d} $, it is assumed that
\begin{subequations}
	\label{eq:lip_usual}
	\begin{align}
		\vnorm{\bgg(\xx) - \bgg(\yy)} &\leq L_{\bgg} \vnorm{\xx - \yy}, 	\label{eq:lip_usual_grad} \\
		\vnorm{\HH(\xx) - \HH(\yy)} &\leq L_{\HH} \vnorm{\xx - \yy}. \label{eq:lip_usual_hessian}
	\end{align}
\end{subequations}
For Newton-MR, the smoothness is required for the auxiliary function $\vnorm{\bgg(\xx)}^{2}$, which amounts to a more relaxed smoothness condition.
 
\begin{assumption}[Moral-smoothness]
	\label{assmpt:lipschitz_special}
	For any $ \xx_{0} \in \real^{d} $, there is $ 0 \leq L(\xx_{0}) < \infty $, such that 
	\begin{align}
		\label{eq:lip_special}
		\vnorm{\HH(\xx) \bgg(\xx)-\HH(\yy) \bgg(\yy)} \leq L(\xx_{0}) \vnorm{\xx - \yy}^{\beta}, \quad \forall (\xx,\yy) \in \mathcal{X}_{0} \times \real^{d}, 
	\end{align}
	where $\mathcal{X}_{0} \triangleq \left\{\xx \in \real^{d} \mid \vnorm{\bgg(\xx)} \leq \vnorm{\bgg(\xx_{0})}\right\}$ and $0 < \beta < \infty$.
\end{assumption}
Note that in \cref{assmpt:lipschitz_special}, the constant $ L(\xx_{0}) $ depends on the choice of $ \xx_{0} $. 
We recall that, for $ \beta = 1 $, \cref{assmpt:lipschitz_special} is similar to the assumption that is often made in the analysis of many non-linear least-squares algorithms for minimizing $\vnorm{\FF(\xx)}^{2}$ for some non-linear mapping $ \FF(\xx) $ in which  $\nabla \vnorm{\FF(\xx)}^{2}$ is assumed Lipschitz continuous (on a level-set).
%
%
We refer to \cref{assmpt:lipschitz_special} as ``moral-smoothness'' since by \cref{eq:lip_special}, it is only the action of Hessian on the gradient that is required to be Lipschitz continuous, and each gradient and/or Hessian individually can be highly irregular, e.g., gradient can be very non-smooth and Hessian can even be discontinuous.

\begin{example}[A Morally-smooth Function with Discontinuous Hessian]
	\label{example:morall_smooth_disc_Hessian}
	Consider a quadratically smoothed variant of hinge-loss function, 
	\begin{align*}
		f(\xx) = \hf \max \Big\{ 0, b\dotprod{\aa,\xx}\Big\}^{2},
	\end{align*}
	for a given $ (\aa,b) \in \real^{d} \times \real $. It is easy to see that 
	\begin{align*}
		\bgg(\xx) &= b^2 \dotprod{\aa,\xx} \aa\; \indicfun{b\dotprod{\aa,\xx} > 0}, \; \quad \forall \xx \in \real^{d},\\
		\HH(\xx) &= b^{2} \aa \aa^{\intercal} \; \indicfun{b\dotprod{\aa,\xx} > 0}, \quad \quad \;\;\; \forall \xx \notin \mathcal{N},
	\end{align*}
	where $\mathcal{N} \triangleq \left\{ \xx \in \real^{d} \mid  b \dotprod{\aa,\xx} = 0 \right\}$. Clearly, Hessian is discontinuous on $ \mathcal{N} $. Since $ \mathcal{N} $ is a set of measure zero, i.e., $ \mu(\mathcal{N}) = 0 $ with respect to the Lebesgue measure $ \mu $, we can arbitrarily define $ \HH(\xx) \triangleq \bm{0} $ for $ \xx \in \mathcal{N} $ (in fact, any other definition would work as well). It follows that for any $ \xx,\yy \in \real^{d} $, we have
	\begin{align*}
		\vnorm{\HH(\xx)\bgg(\xx) - \HH(\xx)\bgg(\xx)} &= b^{4} \vnorm{\aa}^{2} \vnorm{\dotprod{\aa,\xx} \aa  \; \indicfun{b\dotprod{\aa,\xx} > 0} - \dotprod{\aa,\yy} \aa  \; \indicfun{b\dotprod{\aa,\yy} > 0}  } \\
		&\leq b^{4} \vnorm{\aa}^{3} \left|\dotprod{\aa,\xx} \; \indicfun{b\dotprod{\aa,\xx} > 0} - \dotprod{\aa,\yy} \; \indicfun{b\dotprod{\aa,\yy} > 0}\right| \\
		&\leq b^{4} \vnorm{\aa}^{4} \vnorm{\xx - \yy}.
	\end{align*}
	The last inequality follows by considering four cases with $ \indicfun{b\dotprod{\aa,\xx} > 0} = \indicfun{b\dotprod{\aa,\yy} > 0} $ and $ \indicfun{b\dotprod{\aa,\xx} > 0} \neq \indicfun{b\dotprod{\aa,\yy} > 0} $. Indeed, for the former two cases, the last inequality follows immediately. For the latter two cases, suppose without loss of generality that $ \indicfun{b\dotprod{\aa,\xx} > 0} = 1 $ and $ \indicfun{b\dotprod{\aa,\yy} > 0} = 0 $. In this case, we use the fact that by $ b \dotprod{\aa,\yy} \leq 0 $, we have $ b \dotprod{\aa,\xx} \leq b \dotprod{\aa,\xx} - b \dotprod{\aa,\yy}$.
\end{example}

It is easy to show that \cref{eq:lip_special} is implied by \cref{eq:lip_usual}, and hence, it constitutes a relaxation on the typical smoothness requirements for gradient and Hessian, commonly found in the literature. 

\begin{lemma}[Moral-smoothness \cref{eq:lip_special} is less strict than smoothness  \cref{eq:lip_usual}]
	\label{lemma:lipschitz_special}
	Suppose \cref{eq:lip_usual} holds. Then, for any $ \xx_{0} $, we have \cref{eq:lip_special} with $ \beta = 1$ and  $L(\xx_{0}) = L_{\bgg}^{2} + L_{\HH} \vnorm{\bgg(\xx_{0})}$.
\end{lemma}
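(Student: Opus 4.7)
The plan is to use the standard add-and-subtract trick on the product $\nabla^{2}f \cdot \nabla f$, which is the natural way to bound differences of products of two varying quantities. Specifically, I would write
\begin{align*}
\nabla^{2} f(\yy) \nabla f(\yy)  - \nabla^{2} f(\xx) \nabla f(\xx) = \nabla^{2} f(\yy) \bigl(\nabla f(\yy) - \nabla f(\xx)\bigr) + \bigl(\nabla^{2} f(\yy) - \nabla^{2} f(\xx)\bigr) \nabla f(\xx),
\end{align*}
and then apply the triangle inequality together with sub-multiplicativity of the spectral norm. This splits the task into bounding two scalar quantities: one involving a Hessian times a gradient difference, and one involving a Hessian difference times a gradient.

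Next, I would handle each piece using the assumptions \cref{eq:lip_usual_grad} and \cref{eq:lip_usual_hessian}. For the first piece, the gradient-difference factor is controlled by $L_{\bgg}\vnorm{\yy-\xx}$ via \cref{eq:lip_usual_grad}, and the Hessian factor satisfies $\vnorm{\nabla^{2}f(\yy)} \leq L_{\bgg}$. This last fact is a standard consequence of the Lipschitz continuity of the gradient for a twice-differentiable function (obtained, for instance, by considering the directional derivative $\lim_{t\to 0}(\nabla f(\yy+t\vv)-\nabla f(\yy))/t$ and using \cref{eq:lip_usual_grad}), and it gives a $L_{\bgg}^{2}\vnorm{\yy-\xx}$ bound on the first piece. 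For the second piece, \cref{eq:lip_usual_hessian} yields $L_{\HH}\vnorm{\yy-\xx}\cdot\vnorm{\nabla f(\xx)}$.

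Finally, I would restrict to $\xx\in\mathcal{X}_{0}$, which by definition enforces $\vnorm{\nabla f(\xx)}\leq\vnorm{\nabla f(\xx_{0})}$; this converts the second piece into $L_{\HH}\vnorm{\nabla f(\xx_{0})}\vnorm{\yy-\xx}$. Combining the two bounds yields the claim with $\beta=1$ and $L(\xx_{0})=L_{\bgg}^{2}+L_{\HH}\vnorm{\nabla f(\xx_{0})}$. There is no real obstacle here: the argument is a short chain of triangle inequality, sub-multiplicativity, and the two assumed Lipschitz bounds, together with the implicit uniform bound on $\|\nabla^{2}f\|$ coming from \cref{eq:lip_usual_grad}. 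The only mildly subtle point worth flagging in the write-up is why restricting one variable to $\mathcal{X}_{0}$ (rather than both) suffices, and this is precisely because the gradient factor in the second piece is $\nabla f(\xx)$ with $\xx\in\mathcal{X}_{0}$, while $\yy$ can be arbitrary in $\reals^{d}$.
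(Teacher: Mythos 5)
Your proposal is correct and follows essentially the same route as the paper: the add-and-subtract decomposition you describe is exactly the inequality $\|\AA\xx - \BB\yy\| \leq \|\AA\|\|\xx - \yy\| + \|\yy\|\|\AA - \BB\|$ that the paper invokes, followed by the same use of $\vnorm{\nabla^{2}f(\yy)} \leq L_{\bgg}$, the Hessian Lipschitz bound, and the restriction $\xx \in \mathcal{X}_{0}$ to replace $\vnorm{\nabla f(\xx)}$ by $\vnorm{\nabla f(\xx_{0})}$. Your remark about why only $\xx$ (and not $\yy$) needs to lie in $\mathcal{X}_{0}$ is a nice touch that the paper leaves implicit.
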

\begin{proof}
	Suppose \cref{eq:lip_usual} is satisfied and fix any $ \xx_{0} $ with the corresponding $ \mathcal{X}_{0} $ as defined in \cref{assmpt:lipschitz_special}. Since for any two matrices $ \AA,\BB $ and two vectors $ \xx,\yy $, we have $ \|\AA\xx - \BB \yy\| \leq \|\AA\|\|\xx-\yy\|+\|\yy\| \|\AA-\BB\| $, it implies that $\forall \xx \in \mathcal{X}_{0}, ~\forall \yy \in \real^{d}$
	\begin{align*}
		\vnorm{\HH(\yy) \bgg(\yy) - \HH(\xx) \bgg(\xx)} & \leq \vnorm{\HH(\yy)} \vnorm{\bgg(\xx)  - \bgg(\yy)} + \vnorm{\bgg(\xx)} \vnorm{\HH(\xx) - \HH(\yy)} \\
		& \leq L_{\bgg}^{2} \vnorm{\yy  - \xx} + L_{\HH} \vnorm{\bgg(\xx)} \vnorm{\yy  - \xx} \leq \left(L_{\bgg}^{2} + L_{\HH} \vnorm{\bgg(\xx_{0})}\right) \vnorm{\yy  - \xx},
	\end{align*}
	where the last inequality follows since $ \xx \in \mathcal{X}_{0} $. 
\end{proof}
%

By \cref{lemma:lipschitz_special}, any smooth function satisfying \cref{eq:lip_usual} would also satisfy \cref{assmpt:lipschitz_special} with $ \beta = 1 $. Below, we bring examples which show that the converse does not necessarily hold. 

\begin{example}[Smoothness \cref{eq:lip_usual} is strictly stronger than moral-smoothness \cref{eq:lip_special}]
	In this example, for three regimes of $ 0 < \beta < 1, \beta = 1 $, and $ \beta > 1 $, we give concrete counter-examples of functions on $ \mathbb{R} $, i.e., $ d = 1 $, which satisfy \cref{eq:lip_special} but not \cref{eq:lip_usual}.
	\label{example:morally_smooth}
	\begin{enumerate}
		\item For $0< \beta < 1$, consider a solution to the following ODE 
		\begin{align*}
			f''(x) f'(x) &= x^{\beta}, \; x > x_{0} \triangleq \Big((1+\beta)a\Big)^{\frac{1}{(1+\beta)}}, \quad f(x_{0}) = b, \quad f'(x_{0}) = 0,
		\end{align*}
		where $ b \in \mathbb{R}, a \in \mathbb{R}^{+}$. Such a solution can be of the form
		\begin{subequations}
			\begin{align}
				\label{eq:example_f_beta_leq_1}
				f(x) = b + \sqrt{\frac{2}{1+\beta}}  \int_{x_{0}}^{x} \sqrt{t^{\beta + 1} - (1+\beta)a} \; d t,
			\end{align}
			and satisfies \cref{eq:lip_special} with $0 < \beta < 1$, and $L=1$. It is easily verified that $ f''(x) $ and $ f'''(x) $ are both unbounded; hence $ f $ violates \cref{eq:lip_usual}. Note that, on $ (x_{0}, \infty) $, \cref{eq:example_f_beta_leq_1} is convex, and hence by definition, it is invex.
			
			\item For $\beta = 1$, the condition \cref{eq:lip_special} implies that $\vnorm{\left[\nabla^{2}f(\xx)\right]^{2} + \dotprod{\nabla^{3}f(\xx),\bgg(\xx)}} \leq L$.
			In one variable, for $ b \in \mathbb{R}, a \in \mathbb{R}^{++} $, the solution to the following ODE 
			\begin{align*}
				f''(x) f'(x) = x, \quad x > x_{0} \triangleq \sqrt{2a}, \quad f(x_{0}) = b - a \log(\sqrt{2a}), \quad f'(x_{0}) = 0,
			\end{align*}
			can be written as 
			\begin{align}
				\label{eq:example_f_b_1_good}
				f(x) = \hf x \sqrt{x^{2} - 2 a} - a \log\left(\sqrt{x^{2} - 2a} + x\right) + b,
			\end{align} 
			which satisfies \cref{eq:lip_special} with $\beta = L=1$. It can be easily verified that 
			\begin{align*}
				f''(x) = \frac{x}{\sqrt{x^{2} - 2 a}}, \quad \text{ and } \quad f'''(x) = -\frac{2 a}{{\left(x^2 - 2 a\right)}^{\frac{3}{2}}},
			\end{align*}
			which are unbounded, and so,  $f$ does not satisfy \cref{eq:lip_usual_grad} or \cref{eq:lip_usual_hessian}. It is clear that, on $ (x_{0}, \infty) $, \cref{eq:example_f_b_1_good} is convex, and hence it is invex.
			
			Similarly, for $ b \in \mathbb{R}, a \in \mathbb{R}^{++} $, one can consider 
			\begin{align*}
				f''(x) f'(x) = -x,  \quad x \in (-\sqrt{2 a}, \sqrt{2 a}), \quad f(0) = b, \quad f'(0) = -\sqrt{2a}.
			\end{align*}
			The solution to this ODE is of the form
			\begin{align}
				\label{eq:example_f_b_1_bad}
				f(x) = b - \frac{x}{2} \sqrt{2 a - x^{2}} - a \arctan\left(\frac{x}{\sqrt{2 a - x^{2}}}\right),
			\end{align} 
			which satisfies \cref{eq:lip_special} with $\beta = L=1$.
			However, since 
			\begin{align*}
				f''(x) = \frac{x}{\sqrt{2 a -x^{2}}}, \quad \text{ and } \quad f'''(x) = \frac{2 a}{{\left(2a - x^2\right)}^{\frac{3}{2}}},
			\end{align*}
			are both unbounded, such a function does not satisfy either of \cref{eq:lip_usual_grad} or \cref{eq:lip_usual_hessian}.
			
			\item For $\beta > 1$, the condition \cref{eq:lip_special} implies that $\left[\nabla^{2}f(\xx)\right]^{2} + \dotprod{\nabla^{3}f(\xx),\bgg(\xx)} = 0$.
			In one variable, the solution to the following ODE 
			\begin{align*}
				\left( f''(x) \right)^{2} + f'''(x) f'(x) = 0, \quad x > c/2, \quad f(c/2) = a, \quad f'(c/2) = 0, \quad f''(c) = 3b/\sqrt{c}.
			\end{align*}
			can be written as 
			\begin{align}
				\label{eq:example_f_b_g_1}
				f(x) = a + b (2 x - c)^{3/2}.
			\end{align}
		\end{subequations}
		This implies that there are non-trivial function which can satisfy \cref{eq:lip_special} for $\beta > 1$. In addition, since
		\begin{align*}
			f''(x) = \frac{3b}{(2x - c)^{1/2}},\quad f'''(x) = \frac{-3b}{(2x - c)^{3/2}},
		\end{align*}
		are both unbounded, it implies that such $f$ does not satisfy either assumptions in \cref{eq:lip_usual}. It is clear that, on $ (c/2, \infty) $, \cref{eq:example_f_b_g_1} is convex, and hence it is invex.
	\end{enumerate}
\end{example}


\cref{assmpt:lipschitz_special} allows for Hessian to grow unboundedly, however it implies a certain restriction on the growth rate of the action of Hessian on the gradient, i.e., $\HH(\xx) \bgg(\xx)$.
\begin{lemma}[Growth Rate of $\HH(\xx) \bgg(\xx)$]
	\label{lemma:growth_rate}
	Under \cref{assmpt:lipschitz_special}, for any $ \xx_{0} \in \real^{d}$, we have
	\begin{align*}
		L(\xx_{0}) \geq \left(\frac{2 \beta}{\beta+1}\right)^{\beta} \frac{\vnorm{\HH(\xx) \bgg(\xx)}^{\beta+1}}{\vnorm{\bgg(\xx)}^{2\beta}}, \quad \forall \xx \in \mathcal{X}_{0},
	\end{align*}
	where $L(\xx_{0}), \beta$ and $ \mathcal{X}_{0} $ are as defined in \cref{assmpt:lipschitz_special}.
\end{lemma}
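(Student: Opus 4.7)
The plan is to reduce the statement to the classical descent estimate for a function with H\"older-continuous gradient. Define the auxiliary non-negative function $h: \reals^{d} \to \reals^{+}$ by $h(\xx) \defeq \hf \vnorm{\nabla f(\xx)}^{2}$, so that $\nabla h(\xx) = \nabla^{2} f(\xx) \nabla f(\xx)$. With this identification, \cref{assmpt:lipschitz_special} is precisely the statement that $\nabla h$ is $\beta$-H\"older continuous with constant $L(\xx_{0})$ on pairs $(\xx,\yy) \in \mathcal{X}_{0} \times \reals^{d}$.

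First I would derive the H\"older descent inequality: for any $\xx \in \mathcal{X}_{0}$ and any $\yy \in \reals^{d}$,
\begin{align*}
h(\yy) - h(\xx) &= \int_{0}^{1} \dotprod{\nabla h(\xx + t(\yy-\xx)), \yy - \xx} \, dt \\
&\leq \dotprod{\nabla h(\xx), \yy - \xx} + \int_{0}^{1} L(\xx_{0}) t^{\beta} \vnorm{\yy - \xx}^{\beta+1} \, dt \\
&= \dotprod{\nabla h(\xx), \yy - \xx} + \frac{L(\xx_{0})}{\beta+1}\vnorm{\yy - \xx}^{\beta+1}.
\end{align*}
The H\"older bound along the segment is legitimate because the first argument is the fixed point $\xx \in \mathcal{X}_{0}$ and the second argument, $\xx + t(\yy-\xx)$, is simply an arbitrary point in $\reals^{d}$, so it lies in the admissible set of \cref{assmpt:lipschitz_special}.

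Next I would specialize to the gradient step $\yy = \xx - t \nabla h(\xx)$ for $t > 0$ and minimize the resulting upper bound over $t$. Differentiating the right-hand side $h(\xx) - t\vnorm{\nabla h(\xx)}^{2} + \frac{L(\xx_{0})}{\beta+1} t^{\beta+1} \vnorm{\nabla h(\xx)}^{\beta+1}$ in $t$ yields the optimum $t^{\star} = \vnorm{\nabla h(\xx)}^{(1-\beta)/\beta} L(\xx_{0})^{-1/\beta}$. Substituting back and collecting the two terms gives
\begin{align*}
h(\yy) \leq h(\xx) - \frac{\beta}{\beta+1} \, L(\xx_{0})^{-1/\beta} \vnorm{\nabla h(\xx)}^{(\beta+1)/\beta}.
\end{align*}
Since $h \geq 0$, rearranging gives $\vnorm{\nabla h(\xx)}^{(\beta+1)/\beta} \leq \frac{\beta+1}{2\beta} L(\xx_{0})^{1/\beta} \vnorm{\nabla f(\xx)}^{2}$, and raising both sides to the power $\beta$ yields the claimed bound on $L(\xx_{0})$ after substituting $\nabla h(\xx) = \nabla^{2} f(\xx) \nabla f(\xx)$.

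There is no serious obstacle: the only points of care are (i) verifying the H\"older inequality applies to every pair $(\xx, \xx + t(\yy-\xx))$ used in the integration, which is automatic since the second slot is unrestricted in \cref{assmpt:lipschitz_special}; and (ii) bookkeeping of the exponents in the optimization over $t$, in particular checking that $2 + (1-\beta)/\beta = (\beta+1)/\beta$ and that the two coefficients combine into $\beta/(\beta+1)$. The sharpness of the constant $(2\beta/(\beta+1))^{\beta}$ is a direct consequence of this exact optimization, so no further slack needs to be tracked.
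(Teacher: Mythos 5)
Your proposal is correct and follows essentially the same route as the paper: both apply the H\"older descent inequality to $h(\xx)=\tfrac12\vnorm{\nabla f(\xx)}^2$, minimize the resulting upper bound (the paper over all $\pp\in\reals^d$, you over the ray $\yy=\xx-t\nabla h(\xx)$, which contains the unconstrained minimizer), and then invoke $h\geq 0$. The exponent bookkeeping checks out, so nothing further is needed.
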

\begin{proof}
	If $ L(\xx_{0}) = 0 $, then the result follows immediately. Suppose $ L(\xx_{0}) > 0 $. Under \cref{assmpt:lipschitz_special}, \cref{lemma:aux} with $ h(\xx) = \vnorm{\bgg(\xx)}^{2}/2 $, gives
	\begin{align*}
		\vnorm{\bgg(\xx + \pp)}^{2} &\leq \vnorm{\bgg(\xx)}^{2} + 2  \dotprod{\HH(\xx) \bgg(\xx), \pp} + \frac{2 L(\xx_{0})}{(\beta+1)} \vnorm{ \pp }^{\beta+1}, \quad \forall \xx \in \mathcal{X}_{0}, \quad \forall \pp \in \real^{d}.
	\end{align*}
	Let $m(\pp) \triangleq \vnorm{\bgg(\xx)}^{2} + 2  \dotprod{\HH(\xx) \bgg(\xx), \pp} + {2 L(\xx_{0})}/{(\beta+1)} \vnorm{ \pp }^{\beta+1}$.
	Consider $ \pps \neq \bm{0} $ such that 
	\begin{align*}
		\nabla m(\pps) = 2 \HH(\xx) \bgg(\xx) + 2 L(\xx_{0}) \vnorm{ \pps }^{\beta-1} \pps = 0.
	\end{align*}
	It implies that $\vnorm{ \pps }^{\beta-1} \pps = -\HH(\xx) \bgg(\xx)/ L(\xx_{0})$. As a result, for such $ \pps $, we must have $\vnorm{ \pps }^{\beta} = \vnorm{\HH(\xx) \bgg(\xx)}/ L(\xx_{0})$, and $\dotprod{\HH(\xx) \bgg(\xx), \pps} = -\vnorm{\HH(\xx) \bgg(\xx)} \vnorm{\pps}$, where the equality follows since $ \pps $ is a scalar multiple of $ \HH(\xx) \bgg(\xx) $. Since $ m(\pp) $ is convex, it follows that 
	\begin{align*}
		\min_{\pp} m(\pp) &= \vnorm{\bgg(\xx)}^{2} - 2 \vnorm{\HH(\xx) \bgg(\xx)} \vnorm{\pps} + \frac{2 L(\xx_{0})}{(\beta+1)} \vnorm{ \pps }^{\beta+1} \\
		&= \vnorm{\bgg(\xx)}^{2} - \frac{2 \vnorm{\HH(\xx) \bgg(\xx)}^{\frac{\beta+1}{\beta}}}{L(\xx_{0})^{\frac{1}{\beta}}} + \frac{2 \vnorm{\HH(\xx) \bgg(\xx)}^{\frac{\beta+1}{\beta}} }{(\beta+1) L(\xx_{0})^{\frac{1}{\beta}}} \\
		&= \vnorm{\bgg(\xx)}^{2} - \left(\frac{\beta}{\beta+1}\right)\left(\frac{2}{L(\xx_{0})^{\frac{1}{\beta}}}\right) \vnorm{\HH(\xx) \bgg(\xx)}^{\frac{\beta+1}{\beta}} .
	\end{align*}
	The result follows since we have $ m(\pp) \geq \vnorm{\bgg(\xx + \pp)}^{2} \geq 0, \forall \pp $.
\end{proof}

We also require the following regularity on the pseudo-inverse of the Hessian matrix.
\begin{assumption}[Pseudo-inverse Regularity]
	\label{assmpt:pseudo_regularity}
	For any $ \xx_{0} \in \real^{d} $, there is a $\gamma(\xxo) > 0$, s.t.\
	\vspace{-1mm}	
	\begin{align}
		\label{eq:pseudo_regularity}
		\|[\HH(\xx)]^{\dagger}\| \leq 1/\gamma(\xxo), \quad \forall \xx \in \mathcal{X}_{0},
	\end{align}
	where $ \mathcal{X}_{0} $ is as in \cref{assmpt:lipschitz_special}.
\end{assumption}
It turns out that \cref{eq:pseudo_regularity} is equivalent to the regularity of the Hessian matrix on its \emph{range space}.
\vspace{-2mm}
\begin{lemma}
	\label{lemma:pseudo_regularity}
	\cref{eq:pseudo_regularity} is equivalent to 
	\vspace{-2mm}
	\begin{align}
		\label{eq:pseudo_regularity_lower}
		\vnorm{\HH(\xx) \pp \geq \gamma(\xxo) \vnorm{\pp}, \quad \forall \xx \in \mathcal{X}_{0}, \quad \text{and} \quad \forall \pp \in \text{Range}\left(\HH(\xx)\right).}
	\end{align}
\end{lemma}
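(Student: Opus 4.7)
My plan is to exploit two standard properties of the Moore--Penrose pseudo-inverse for the symmetric matrix $\HH$ (symmetry is guaranteed by \cref{assmpt:diff}): (i) $\HH^{\dagger}\HH$ is the orthogonal projector onto $\text{Range}(\HH^{T})=\text{Range}(\HH)$, and (ii) $\HH\HH^{\dagger}$ is the orthogonal projector onto $\text{Range}(\HH)$. With these two identities in hand, both directions of the equivalence reduce to a single line of matrix-norm manipulation, so the proof is short.

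For the forward direction, assume $\|\HH^{\dagger}\|\le 1/\gamma$ and pick any $\pp\in\text{Range}(\HH)$. By property (i), $\HH^{\dagger}\HH\pp=\pp$, and therefore
\begin{align*}
\vnorm{\pp}=\vnorm{\HH^{\dagger}\HH\pp}\le \vnorm{\HH^{\dagger}}\,\vnorm{\HH\pp}\le \tfrac{1}{\gamma}\vnorm{\HH\pp},
\end{align*}
which rearranges to $\vnorm{\HH\pp}\ge \gamma\vnorm{\pp}$, giving \cref{eq:pseudo_regularity_lower}.

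For the converse, assume \cref{eq:pseudo_regularity_lower} and let $\yy\in\reals^{d}$ be arbitrary. Decompose $\yy=\yy_{R}+\yy_{N}$ with $\yy_{R}\in\text{Range}(\HH)$ and $\yy_{N}\in\text{Null}(\HH)=\text{Range}(\HH)^{\perp}$. Setting $\pp\defeq \HH^{\dagger}\yy$, one has $\pp\in\text{Range}(\HH^{T})=\text{Range}(\HH)$, and by property (ii), $\HH\pp=\HH\HH^{\dagger}\yy=\yy_{R}$. The hypothesis applied to this $\pp$ then gives
\begin{align*}
\gamma\vnorm{\HH^{\dagger}\yy}=\gamma\vnorm{\pp}\le \vnorm{\HH\pp}=\vnorm{\yy_{R}}\le\vnorm{\yy},
\end{align*}
and taking the supremum over unit-norm $\yy$ yields $\vnorm{\HH^{\dagger}}\le 1/\gamma$, which is \cref{eq:pseudo_regularity}.

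I do not anticipate any real obstacles here: the argument is entirely algebraic once the two projector identities are invoked, and symmetry of $\HH$ (which ensures $\text{Range}(\HH)=\text{Range}(\HH^{T})$, so that $\text{Null}(\HH)$ is orthogonal to $\text{Range}(\HH)$) is the only nontrivial ingredient. The only mild subtlety worth flagging to the reader is that \cref{eq:pseudo_regularity_lower} genuinely needs the restriction $\pp\in\text{Range}(\HH)$ --- the bound fails on $\text{Null}(\HH)$ when $\HH$ is singular --- and the forward direction makes clear why this is exactly the right subspace.
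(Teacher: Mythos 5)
Your proof is correct and follows essentially the same route as the paper's: the forward direction uses $\HH^{\dagger}\HH\pp=\pp$ on $\text{Range}(\HH)$ together with submultiplicativity of the spectral norm, and the converse uses the orthogonal decomposition of an arbitrary vector, the projector identity $\HH\HH^{\dagger}\yy=\yy_{R}$, and the Pythagorean bound $\vnorm{\yy_{R}}\leq\vnorm{\yy}$. The only cosmetic difference is that the paper first discards the null-space component via $\HH^{\dagger}\qq=\bm{0}$ and then applies the hypothesis to $\hat{\pp}=\HH^{\dagger}\pp$, whereas you apply it directly to $\pp=\HH^{\dagger}\yy$; the two computations are identical.
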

The proof of \cref{lemma:pseudo_regularity} follows simply by considering the eigenvalue decomposition of $ \HH $.
\cref{assmpt:pseudo_regularity} is trivially satisfied for all strongly convex functions. However, it is also satisfied for any function whose (possibly rank-deficient) Hessian is uniformly positive definite in the sup-space spanned by its range. 

\begin{example}[Functions satisfying \cref{assmpt:pseudo_regularity}]
	A simple example of a non-strongly convex function satisfying \cref{assmpt:pseudo_regularity} is the undetermined least squares $ f(\xx) = \vnorm{\AA\xx - \bb}^{2}/2 $ with full row rank matrix $ \AA \in \real^{n \times d}, ~ n \leq d $. This problem is clearly only weakly convex since the Hessian matrix, $ \AA^{\intercal} \AA \in \real^{d \times d}$, is rank-deficient. However, it is easy to see that \cref{eq:pseudo_regularity} holds with $ \gamma(\xxo) = \gamma = \sigma_{n}^{2}(\AA) $, where $\sigma_{n}$ is the smallest non-zero singular value of $ \AA $.
	
\end{example}

\vspace{-1mm}
Finally, we make the following structural assumption about $ f $ with regards to the gradient and its projection onto the range space of Hessian. 

\begin{assumption}[Gradient-Hessian Null-Space Property]
	\label{assmpt:null_space}
	For any $\xx \in \mathbb{R}^{d}$, let $\UU_{\xx}$ and $\UU^{\perp}_{\xx}$ denote arbitrary orthogonal bases for $\text{Range}(\HH(\xx))$ and its orthogonal complement, respectively. A function is said to satisfy the Gradient-Hessian Null-Space property, if for any $ \xxo \in \real^{d} $, there is a $0 < \nu(\xxo) \leq 1$, such that 
	\vspace{-2mm}
	\begin{align}
		\label{eq:null_space}
		\nu(\xxo) \vnorm{\left(\UU^{\perp}_{\xx}\right)^{\intercal} \bgg(\xx)}^{2} \leq \big( 1 - \nu(\xxo) \big) \vnorm{\UU_{\xx}^{\intercal} \bgg(\xx)}^{2}, \quad \forall \xx \in \mathcal{X}_{0},
	\end{align}
	where $ \mathcal{X}_{0} $ is as in \cref{assmpt:lipschitz_special}.
\end{assumption}

\cref{lemma:null_space_grad} gives some simple, yet useful, consequences of \cref{assmpt:null_space}.
\begin{lemma}
	\label{lemma:null_space_grad}
	Under \cref{assmpt:null_space}, we have 
	\begin{subequations}
		\label{eq:null_space_grad}	
		\begin{align}
			\nu(\xxo) \vnorm{\bgg(\xx)}^{2} &\leq  \vnorm{\UU_{\xx}^{\intercal}\bgg(\xx)}^{2}, \quad \quad \quad \forall \xx \in \mathcal{X}_{0}, \label{eq:null_space_grad_1} \\
			\big(1-\nu(\xxo)\big) \vnorm{\bgg(\xx)}^{2} &\geq  \vnorm{\left(\UU^{\perp}_{\xx}\right)^{\intercal}\bgg(\xx)}^{2}, \quad \; \forall \xx \in \mathcal{X}_{0}. \label{eq:null_space_grad_2}
		\end{align}
	\end{subequations}
\end{lemma}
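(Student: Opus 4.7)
The plan is to observe that since $\UU_{\xx}$ and $\UU^{\perp}_{\xx}$ are orthogonal bases for $\text{Range}(\nabla^{2} f(\xx))$ and its orthogonal complement, respectively, they together span all of $\reals^{d}$. Therefore, by the Pythagorean theorem (or equivalently Parseval's identity for the orthogonal decomposition), one has the fundamental identity
\begin{equation*}
\vnorm{\nabla f(\xx)}^{2} = \vnorm{\UU_{\xx}^{T}\nabla f(\xx)}^{2} + \vnorm{\left(\UU^{\perp}_{\xx}\right)^{T}\nabla f(\xx)}^{2}.
\end{equation*}
This decomposition is the only nontrivial geometric ingredient needed; once it is in hand, both inequalities follow by elementary algebra from \cref{eq:null_space}.

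For part \cref{eq:null_space_grad_1}, I would substitute the bound from \cref{eq:null_space} into the decomposition to obtain
\begin{equation*}
\vnorm{\nabla f(\xx)}^{2} \leq \vnorm{\UU_{\xx}^{T}\nabla f(\xx)}^{2} + \frac{1-\nu}{\nu}\vnorm{\UU_{\xx}^{T}\nabla f(\xx)}^{2} = \frac{1}{\nu}\vnorm{\UU_{\xx}^{T}\nabla f(\xx)}^{2},
\end{equation*}
and then rearrange. For part \cref{eq:null_space_grad_2}, I would first rewrite \cref{eq:null_space} in the equivalent form $\vnorm{\UU_{\xx}^{T}\nabla f(\xx)}^{2} \geq \frac{\nu}{1-\nu}\vnorm{(\UU^{\perp}_{\xx})^{T}\nabla f(\xx)}^{2}$ (valid for $\nu < 1$; the case $\nu = 1$ forces the orthogonal-complement component to vanish and the inequality is trivial), plug into the decomposition, and collect terms to get $\vnorm{\nabla f(\xx)}^{2} \geq \frac{1}{1-\nu}\vnorm{(\UU^{\perp}_{\xx})^{T}\nabla f(\xx)}^{2}$.

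There is essentially no obstacle here: the content of the lemma is that \cref{eq:null_space}, which compares the two orthogonal components of the gradient against each other, can be re-expressed as bounds of each component against the full gradient norm. The only subtlety worth flagging in the write-up is independence of the bound from the particular choice of orthogonal bases $\UU_{\xx}$ and $\UU_{\xx}^{\perp}$, but this is immediate since the quantities $\vnorm{\UU_{\xx}^{T}\nabla f(\xx)}$ and $\vnorm{(\UU^{\perp}_{\xx})^{T}\nabla f(\xx)}$ are just the norms of the orthogonal projections of $\nabla f(\xx)$ onto $\text{Range}(\nabla^{2} f(\xx))$ and its orthogonal complement, which are intrinsic to these subspaces.
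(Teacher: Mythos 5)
Your proposal is correct and follows essentially the same route as the paper: the orthogonal (Pythagorean) decomposition $\vnorm{\nabla f(\xx)}^{2} = \vnorm{\UU_{\xx}^{T}\nabla f(\xx)}^{2} + \vnorm{(\UU^{\perp}_{\xx})^{T}\nabla f(\xx)}^{2}$ followed by substitution of \cref{eq:null_space}. The paper dispatches \cref{eq:null_space_grad_2} with ``obtained similarly,'' whereas you spell it out (including the $\nu=1$ edge case), which is fine and correct.
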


\begin{proof}
	For a given $ \xxo \in \real^{d} $, take any $ \xx \in \mathcal{X}_{0} $. For \cref{eq:null_space_grad_1}, we have
	\begin{align*}
		\vnorm{\bgg(\xx)}^{2} &= \vnorm{\left(\UU_{\xx} \UU_{\xx}^{\intercal} + \UU^{\perp}_{\xx} \left(\UU^{\perp}_{\xx}\right)^{\intercal}\right) \bgg(\xx)}^{2} = \vnorm{\UU_{\xx}^{\intercal}\bgg(\xx)}^{2} + \vnorm{\left(\UU^{\perp}_{\xx}\right)^{\intercal} \bgg(\xx)}^{2} \\
		& \leq \frac{1 - \nu(\xxo)}{\nu(\xxo)} \vnorm{\UU_{\xx}^{\intercal}\bgg(\xx)}^{2} +  \vnorm{\UU_{\xx}^{\intercal}\bgg(\xx)}^{2} = \frac{1}{\nu(\xxo)} \vnorm{\UU_{\xx}^{\intercal}\bgg(\xx)}^{2}.
	\end{align*}
	Also, \cref{eq:null_space_grad_2} is obtained similarly.  
\end{proof}

\cref{assmpt:null_space} ensures that, as long as the gradient is non-zero, its angle with the sub-space spanned by the Hessian matrix is uniformly bounded away from zero. In other words, the gradient will never become arbitrarily orthogonal to the range space of Hessian.

\begin{example}[Over-parameterized ERM Using Linear Predictor Models]
	\label{example:erm}
	Consider an empirical risk minimization (ERM) problem involving linear predictor models \cite{shalev2014understanding},
	\begin{align}
		\label{eq:erm}
		f(\xx) = \sum_{i=1}^{n} f_{i}(\aa_{i}^{\intercal}\xx),
	\end{align}
	where $ \aa_{i} \in \real^{d}, i =1,\ldots n$  are given data points and each $ f_{i}:\real \rightarrow \real $ is some nonlinear misfit or loss corresponding to the $ i\th $ data point. Consider the over-parameterized settings ($ n \leq d $), which has recently garnered significant attention within the machine learning community, e.g., \cite{taheri2021fundamental,su2019learning,muthukumar2021classification,liu2022loss}. Assume that the data points are linearly independent, i.e., $ \text{Range}\left( \left\{ \aa_{i} \right\}_{i=1}^{n} \right) = \real^{n} $. Further, suppose each loss function $ f_{i} $ is such that if $ f_{i}^{''}(t) = 0$ then we must also have that $f_{i}^{'}(t) = 0 $. Many loss functions satisfy this assumption, e.g., the convex function $ t^{2p} $ for any $ p \geq 1 $, so that $ f_{i}(\aa_{i}^{\intercal}\xx) = \dotprod{\aa_{i},\xx}^{2p} $. Another example is when each $ f_{i} $ is such that $f_{i}^{''}(t) > 0, \forall t $, e.g., logistic function $ \log(1+e^{-t}) $ or any strongly convex. Note that even if each $ f_{i} $ is strongly convex, since we have $ n \leq d $, the overall objective $ f $ in \cref{eq:erm} is still only weakly convex. 
	
	Define
	\begin{align*}
		\AA = \begin{pmatrix}
			\aa_{1}^{\intercal} \\
			\vdots \\
			\aa_{n}^{\intercal}
		\end{pmatrix}  \in \mathbb{R}^{n \times d},\quad 
		\DD = \begin{pmatrix}
			f''(\aa_{1}^{\intercal} \xx) & & & \\
			& f''(\aa_{2}^{\intercal} \xx) & & \\
			& & \ddots & \\
			& & & f''(\aa_{n}^{\intercal} \xx)
		\end{pmatrix} \in \mathbb{R}^{n \times n}.
	\end{align*}
	It is easy to see that
	\begin{align*}
		\bgg(\xx) = \AA^{\intercal} \begin{pmatrix} f_{i}^{'}(\aa_{1}^{\intercal} \xx) \\ \vdots \\ f_{i}^{'}(\aa_{n}^{\intercal} \xx) \end{pmatrix}, \quad 	\HH(\xx) = \AA^{\intercal} \DD \AA.
	\end{align*}
	For the gradient to be in the range of the Hessian, we must have for some $ \vv \in \real^{d} $
	\begin{align}
		\label{eq:erm_nullspace}
		\AA^{\intercal} \DD \AA \vv = \AA^{\intercal} [f'(\aa_{1}^{\intercal} \xx),\ldots,f'(\aa_{n}^{\intercal} \xx)]^{\intercal}.
	\end{align}
	Under the assumption on $ f_{i} $, it can be seen that $\vv = \AA^{\dagger} \DD^{\dagger} [f'(\aa_{1}^{\intercal} \xx),\ldots,f'(\aa_{n}^{\intercal} \xx)]^{\intercal}$, 	satisfies \cref{eq:erm_nullspace}, 	where $\AA^{\dagger} = \AA^{\intercal} \left(\AA \AA^{\intercal}\right)^{-1} \in \mathbb{R}^{d \times n}$. Hence, \cref{assmpt:null_space} holds with $ \nu(\xxo) = \nu = 1 $. 
\end{example}

\subsection{Convergence Analysis}
\label{sec:convergence}
In this section, we discuss the convergence properties of Newton-MR. For this, in \cref{sec:exact}, we first consider the slightly simpler, yet less practical, case where the sub-problems \cref{eq:least_norm_solution} are solved exactly (\cref{alg:NewtonMR_Ex}). Clearly, this is too stringent, and as a result, in \cref{sec:inexact}, it is subsequently relaxed to allows inexact solutions (\cref{alg:NewtonMR}). Convergence under a generalized variant of Polyak-\L{}ojasiewicz inequality will be treated in \cref{sec:gpl}.

The following simple Lemma relating to \cref{eq:lip_special} is frequently used in our theoretical analysis. The proof can be found in most textbooks and is only given here for completeness.
\begin{lemma}
	\label{lemma:aux}
	Consider any $ \xx,\zz \in \mathbb{R}^{d} $, $0 < \beta < \infty$, $ 0 \leq L < \infty $ and $ h: \mathbb{R}^{d} \rightarrow \mathbb{R} $. If $$\vnorm{\nabla h(\yy) - \nabla h(\xx)} \leq L \vnorm{\yy - \xx}^{\beta}, \; \forall \yy \in \left[\xx, \zz\right],$$ then $$h(\yy) \leq h(\xx) + \dotprod{\nabla h(\xx), \yy - \xx} + L \vnorm{\yy - \xx}^{\beta+1}/(\beta+1), \; \forall \yy \in \left[\xx, \zz\right].$$
\end{lemma}

\begin{proof}
	For any $ \yy \in \left[\xx, \zz\right] $, using the mean value theorem, we have
	\begin{align*}
		h(\yy) - h(\xx) - \dotprod{\nabla h(\xx), \yy - \xx} &= \int_{0}^{1}  \dotprod{\nabla h(\xx + \tau (\yy - \xx)), \yy - \xx} d \tau - \dotprod{\nabla h(\xx), \yy - \xx} \\
		&\leq  \vnorm{\yy - \xx}  \int_{0}^{1}  \vnorm{\nabla h(\xx + \tau (\yy - \xx)) - \nabla h(\xx)} d \tau \\
		& \leq L \vnorm{\yy - \xx}^{\beta+1}  \int_{0}^{1}  \tau^{\beta} d \tau \leq \frac{L}{\beta+1} \vnorm{\yy - \xx}^{\beta+1}.
	\end{align*}
\end{proof}

\subsubsection{Exact Update}
\label{sec:exact}
The underlying sub-problem of Newton-MR, at $k\th$ iteration, involves OLS problem of the form \cref{eq:ols}. Under the invexity assumption, the Hessian matrix can be indefinite and rank deficient, and as a result, the sub-problem \eqref{eq:ols} may contain infinitely many solutions.  
Indeed, the general solution of \eqref{eq:ols} is written as 
\begin{align*}
	\pp = -\left[\HHk\right]^{\dagger} \bggk + \left(\eye - \HHk \left[\HHk\right]^{\dagger}\right) \qq, \quad \forall \qq \in\mathbb{R}^{d},
\end{align*}
which implies that when $ \HHk $ has a non-trivial null-space, the sub-problem \eqref{eq:ols} has infinitely many solutions. 
Among these, the one with the minimum norm is defined uniquely as 
\begin{align}
	\label{eq:least_norm_solution}
	\min_{\pp \in \real^{d}} ~~ \|\pp\| \quad \text{s.t.} \quad \pp \in \Argmin_{\widehat{\pp} \in \mathbb{R}^{d}} \vnorm{\bggk + \HHk \widehat{\pp}},
\end{align}
which yields $ \ppk = -\left[\HHk\right]^{\dagger} \bggk $. 
In our analysis below, among all possible solutions, we will choose the least norm solution, to get iterations of the form \cref{eq:newton_mr_iterations}. 
The resulting Newton-MR variant with exact update \cref{eq:least_norm_solution} is depicted in \cref{alg:NewtonMR_Ex}.

\begin{algorithm}
	\caption{Exact Newton-MR}
	\begin{algorithmic}
		\vspace{1mm}
		\STATE \textbf{Input:} 
		\begin{itemize}[label=-]
			\item Initial iterate $\xx_{0} \in \real^{d}$, Line-search parameter $0 < \rho < 1$, Termination tolerance $0 < \epsilon$
		\end{itemize}
		\vspace{1mm}
		\FOR {$k = 0,1,2, \cdots$ until $ \vnorm{\bggk} \leq \epsilon $} 
		\vspace{1mm}
		\STATE Find $ \ppk $ using \cref{eq:least_norm_solution}, i.e., set $\ppk = -\left[\HHk\right]^{\dagger} \bggk$
		\vspace{1mm}
		\STATE \label{alg:step:alphak} Find step-size, $\alphak$, such that \eqref{eq:armijo_gen} holds with $ \rho $
		\vspace{1mm}
		\STATE Set $\xxkk  =  \xxk + \alphak \ppk$
		\vspace{1mm}
		\ENDFOR
		\vspace{1mm}
		\STATE \textbf{Output:} $\xx$ for which $ \vnorm{\bgg(\xx)} \leq \epsilon$
	\end{algorithmic}
	\label{alg:NewtonMR_Ex}
\end{algorithm}

\cref{thm:exact} gives the convergence guarantees of \cref{alg:NewtonMR_Ex}.

\begin{theorem}[Convergence of \cref{alg:NewtonMR_Ex}]
	\label{thm:exact}
	Consider \cref{assmpt:diff,assmpt:lipschitz_special,assmpt:pseudo_regularity,assmpt:null_space}. For the iterates of \cref{alg:NewtonMR_Ex}, we have 
	\vspace{-1mm}
	\begin{align*}
		\vnorm{\bggkk}^2 \le \left( 1 - 2 \rho \tau(\xxo) \vnorm{\bggk}^{(1-\beta)/\beta} \right)\vnorm{\bggk}^2,
	\end{align*}
	where $\tau(\xxo) \triangleq \left( {(1-\rho) (1+\beta) \left(\nu(\xxo) \gamma(\xxo)\right)^{1+\beta} }/{L(\xx_{0})} \right)^{1/\beta}$, $ \rho $ is the line-search parameter of \cref{alg:NewtonMR_Ex}, $ \xx_{0} $ is the initial iterate, $ (\beta , L(\xx_{0})) $ are as in \cref{assmpt:lipschitz_special}, $ \gamma(\xxo) $ is as in \cref{assmpt:pseudo_regularity}, and $ \nu(\xxo) $ is defined in \cref{assmpt:null_space}.
\end{theorem}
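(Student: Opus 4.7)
The plan is to combine the moral-smoothness descent bound with the explicit form of the exact Newton-MR direction to exhibit a concrete lower bound on the admissible Armijo step, and then substitute that step back into the sufficient-decrease inequality.

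First, I would invoke the auxiliary descent lemma (the same \cref{lemma:aux} applied to $h(\xx) = \vnorm{\bgg(\xx)}^{2}/2$ that is used in the proof of \cref{lemma:growth_rate}); since \cref{assmpt:lipschitz_special} supplies exactly the $\beta$-H\"older regularity of $\nabla h = \HH\bgg$ that this lemma requires, I would obtain, for every $\xx \in \mathcal{X}_{0}$ and every $\pp \in \reals^{d}$,
\begin{align*}
\vnorm{\bgg(\xx+\pp)}^{2} \;\le\; \vnorm{\bgg(\xx)}^{2} + 2\dotprod{\HH(\xx)\bgg(\xx),\pp} + \frac{2 L(\xx_{0})}{\beta+1}\vnorm{\pp}^{\beta+1}.
\end{align*}
By \cref{eq:grad_descent}, every iterate of \cref{alg:NewtonMR} stays inside $\mathcal{X}_{0}$, so the bound applies at each $\xxk$.

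Second, I would specialize the two right-hand terms to $\pp = \alpha\ppk$ with $\ppk = -\HHk^{\dagger}\bggk$. Symmetry of $\HHk$ together with the identity $\HHk^{\dagger}\HHk = \UU_{\xxk}\UU_{\xxk}^{T}$ gives $\dotprod{\HHk\bggk,\ppk} = -\vnorm{\UU_{\xxk}^{T}\bggk}^{2} \le -\nu\vnorm{\bggk}^{2}$ by \cref{eq:null_space_grad_1}, while \cref{assmpt:pseudo_regularity} yields $\vnorm{\ppk} \le \vnorm{\bggk}/\gamma$. Plugging these into the descent inequality and comparing with the Armijo rule \cref{eq:armijo_gen} shows that sufficient decrease holds whenever
\begin{align*}
\alpha^{\beta} \;\le\; \frac{(1-\rho)(\beta+1)\,\nu\,\gamma^{\beta+1}}{L(\xx_{0})\,\vnorm{\bggk}^{\beta-1}},
\end{align*}
so the line-search step may be taken as the $\beta$-th root $\alpha^{\star}$ of the right-hand side. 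Substituting $\alphak=\alpha^{\star}$ back into \cref{eq:armijo_gen} and using the null-space lower bound once more produces $\vnorm{\bggkk}^{2} \le (1 - 2\rho\nu\alpha^{\star})\vnorm{\bggk}^{2}$; a short algebraic rearrangement confirms that $\nu\alpha^{\star}=\tau\vnorm{\bggk}^{(1-\beta)/\beta}$, which is precisely the claim.

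The main obstacle is the exponent bookkeeping in the final step: one must apply the null-space inequality on the linear descent term, so that $\nu$ enters through the first-order contribution and is elevated to $\nu^{(1+\beta)/\beta}$ that pairs with $\gamma^{1+\beta}$ to form the $(\nu\gamma)^{1+\beta}$ appearing inside $\tau$, while using the looser bound $\vnorm{\ppk} \le \vnorm{\bggk}/\gamma$ on the higher-order residual; interchanging these two choices yields a strictly weaker constant in $\tau$. A secondary subtlety is that $\alpha^{\star}$ may exceed one, but since the Armijo rule of \cref{alg:NewtonMR} imposes no upper cap on $\alphak$, this does not affect the argument.
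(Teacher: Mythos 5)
Your proposal reproduces the paper's own proof essentially step for step: the same application of the descent lemma to $h(\xx)=\vnorm{\bgg(\xx)}^{2}/2$, the same identities $\dotprod{\HHk\bggk,\ppk}=-\vnorm{\UU_{k}^{T}\bggk}^{2}\le-\nu\vnorm{\bggk}^{2}$ and $\vnorm{\ppk}\le\vnorm{\bggk}/\gamma$, the same admissible-step lower bound, and the same exponent bookkeeping that turns $\nu\cdot\nu^{1/\beta}$ into the $(\nu\gamma)^{1+\beta}$ inside $\tau$. One small correction to your closing remark: the line search in \cref{alg:NewtonMR} does cap the step at $\alphak\le 1$ (see \cref{sec:step-size}, which selects the largest $\alphak\le 1$ satisfying \cref{eq:armijo_gen}), so the case $\alpha^{\star}>1$ is not dismissed for the reason you give — though this caveat is equally unaddressed in the paper's own proof and does not distinguish your argument from theirs.
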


\begin{proof}
	We first note that, by \cref{eq:grad_descent}, all iterates of \cref{alg:NewtonMR_Ex} remain in $ \sX_{0} $. Let $\UUk \in \mathbb{R}^{d \times r}$ be any orthogonal basis for the range of $\HHk$ and $r = \text{rank}\left(\HHk\right) \leq d$. It follows that $\dotprod{\HHk \bggk, \ppk} = -\dotprod{\bggk, \HHk \left[\HHk\right]^{\dagger} \bggk} =- \vnorm{\UU_{k}^{\intercal}\bggk}^{2}$. 
	Using \eqref{eq:armijo_gen}, we get the reduction in the gradient norm as
	\begin{align}
		\label{eq:grad_decrease}
		\vnorm{\bggkk}^{2} \leq \vnorm{\bggk}^{2} - 2 \rho \alphak \vnorm{\UU_{k}^{\intercal} \bggk}^{2}.
	\end{align}
	All that is left is to obtain a non-zero iteration independent lower-bound on $\alphak$, i.e., $ \alphak \geq \alpha > 0$, for which \cref{eq:grad_decrease} holds. For this, using \cref{assmpt:lipschitz_special} and \cref{lemma:aux}, with $ \xx = \xxk $, $ \zz = \xxk + \ppk $, $ \yy = \xxk + \alpha \ppk $, and $ h(\xx) = \vnorm{\bgg}^{2}/2 $, we get
	\begin{align}
		\label{eq:grad_upper_bound}
		\vnorm{\bggkk}^{2} &\leq \vnorm{\bggk}^{2} + 2 \alpha \dotprod{\HHk \bggk, \ppk} + \frac{2 \alpha^{\beta+1} L(\xx_{0})}{(\beta+1)} \vnorm{ \ppk }^{\beta+1}.
	\end{align}
	\cref{assmpt:pseudo_regularity} implies $\vnorm{ \ppk } = \vnorm{ \left[\HHk\right]^{\dagger} \bggk } \leq \vnorm{ \bggk }/[\gamma(\xxo)]^{\beta+1}$. Hence, it follows that
	\begin{align*}
		\vnorm{\bggkk}^{2} &\leq \vnorm{\bggk}^{2} - 2\alpha \vnorm{\UU_{k}^{\intercal} \bggk}^{2} + \frac{ 2 \alpha^{\beta+1} L(\xx_{0})}{[\gamma(\xxo)]^{\beta+1} (\beta+1)} \vnorm{\bggk}^{\beta+1}.
	\end{align*}
	Using the above as well as \cref{eq:grad_decrease}, we require for $ \alpha $ to satisfy 
	\begin{align*}
		\vnorm{\bggk}^{2} - 2\alpha \vnorm{\UU_{k}^{\intercal} \bggk}^{2} + \frac{ 2 \alpha^{\beta+1} L(\xx_{0})}{[\gamma(\xxo)]^{\beta+1} (\beta+1)} \vnorm{\bggk}^{\beta+1} \leq  \vnorm{\bggk}^2 - 2\rho \alpha \vnorm{\UU_{k}^{\intercal} \bggk}^{2}.
	\end{align*}
	But from \cref{eq:null_space_grad_1} and \cref{assmpt:null_space}, this is implied if $ \alpha $ satisfies $\alpha^{\beta+1} L(\xx_{0}) \vnorm{\bggk}^{\beta+1} \leq  [\gamma(\xxo)]^{\beta+1} (\beta+1) (1-\rho)\nu(\xxo) \alpha \vnorm{\bggk}^2$, which is given if $\alpha \leq \left( {(1-\rho) (1+\beta) \nu(\xxo) [\gamma(\xxo)]^{1+\beta} }/{L(\xx_{0})} \right)^{1/\beta} \vnorm{\bggk}^{(1-\beta)/\beta}$. 
	This implies that any step-size returned from the line-search will be at least as large as the right-hand side. Now, using \cref{assmpt:null_space} and \cref{lemma:null_space_grad} again, we get $\vnorm{\bggkk}^2 \leq \vnorm{\bggk}^2  - 2\rho\alpha \vnorm{\UU_{k}^{\intercal}\bggk}^2 \leq (1 - 2\rho\nu(\xxo) \alpha) \vnorm{\bggk}^2.$ 
\end{proof}

\begin{remark}
	\label{rem:meaningful_rate}
	The convergence rate of \cref{thm:exact}, for $ \beta \neq 1 $, seems rather complicated. However, one can ensure that it is indeed meaningful, i.e., the rate is positive and strictly less than one. From \cref{lemma:growth_rate,assmpt:pseudo_regularity,assmpt:null_space}, it follows that $ \forall \xx \in \mathcal{X}_{0} $, we have
	\begin{align*}
		L(\xx_{0}) &\geq \left(\frac{2 \beta}{\beta+1}\right)^{\beta} \frac{\vnorm{\HH(\xx) \bgg(\xx)}^{\beta+1}}{\vnorm{\bgg(\xx)}^{2\beta}} = \left(\frac{2 \beta}{\beta+1}\right)^{\beta} \frac{\vnorm{\HH(\xx) \left(\UU_{\xx} \UU_{\xx}^{\intercal} \bgg(\xx)\right)}^{\beta+1}}{\vnorm{\bgg(\xx)}^{2\beta}} \\
		&\geq \left(\frac{2 \beta}{\beta+1}\right)^{\beta} (\gamma(\xxo) \sqrt{\nu(\xxo)})^{\beta+1} \vnorm{ \bgg(\xx)}^{1-\beta},
	\end{align*}
	where $\UU_{\xx} \in \mathbb{R}^{d \times r}$ is any orthogonal basis for the range of $\HH(\xx)$. 
	It can also be shown that 
	\begin{align*}
		\rho \left((1+\beta)(1-\rho)\right)^{1/\beta} \leq \frac{\beta}{1+\beta}, ~\forall \rho \in (0,1), ~\forall \beta \in (0,\infty),
	\end{align*}
	with equality holding at $ \rho = \beta/(1+\beta) $.
	Hence, noting that $ \nu(\xxo) \leq 1 $, we obtain
	\begin{align*}
		L(\xx_{0}) \geq 2^{\beta} \rho^{\beta} (1-\rho)(1+\beta) \left(\nu(\xxo) \gamma(\xxo)\right)^{\beta+1} \vnorm{ \bgg(\xx)}^{1-\beta}, \quad \forall \xx \in \mathcal{X}_{0},
	\end{align*}
	which ensures 
	\begin{align}
		\label{eq:meaningful_rate}
		0 \leq 1 - 2 \rho \tau(\xxo) \vnorm{\bggk}^{(1-\beta)/\beta} < 1, \quad k = 0,1,\ldots.
	\end{align}
\end{remark}


\begin{remark}[Parallels between Newton-MR/Newton-CG and MINRES/CG]
	\label{rem:traditional}
	In the traditional settings of strongly convex and smooth functions, the global convergence rate in \cref{thm:exact} with $ \beta = 1 $ is identical to that of Newton-CG; see \cite{ssn2018} for example. However, the former indicates the rate of reduction in the norm of the gradient of the function, while the latter corresponds to the reduction in the function value itself. This relationship is reminiscent of the convergence guarantees of MINRES and CG for linear systems involving symmetric positive-definite matrices. Indeed, while having identical convergence rates, the former is stated in terms of the reduction in the norm of the residual of the iterates, while the latter indicates the reduction in the error of the iterates themselves. Also, recall that MINRES, unlike CG, can be applied beyond positive-definite matrices. In the same spirit, Newton-MR, unlike Newton-CG, is applicable beyond the traditional convex settings to invex functions.
	
\end{remark}

Similarly as in the case of many Newton-type methods, we can obtain local convergence guarantees for Newton-MR with unit step-size, i.e., $ \alphak = 1 $. We further show that such result greatly generalizes the classical analysis of Newton-CG. 
We will show that local (super-)linear rate of convergence is possible under either \cref{assmpt:lipschitz_special} with $ \beta > 1 $, or \cref{assmpt:lip_special_03}. 

\begin{assumption}
	\label{assmpt:lip_special_03}
	For some $ 0 < \beta < \infty $ and $ 0 \leq L_{\HH} < \infty $, we have $ \forall \xx \in \real^{d},\; \forall \pp \in \text{Range}(\HH(\xx))$,
	\begin{align}
		\label{eq:lip_special_03}
		\dotprod{\bgg(\xx + \pp),\big( \HH(\xx + \pp) - \HH(\xx) \big) \pp } &\leq L_{\HH} \vnorm{\bgg(\xx + \pp)}\vnorm{\pp}^{1+\beta}. 
	\end{align}
\end{assumption}
Although, we do not know of a particular way to, a priori, verify \cref{assmpt:lip_special_03}, it is easy to see that \cref{eq:lip_special_03} with $ \beta = 1 $ is implied by \cref{eq:lip_usual_hessian}, and hence weaker. In fact, unlike the usual local convergence analysis of Newton-type methods, analyzing iterations in terms of gradient norm allows us to weaken \cref{eq:lip_usual_hessian} and instead consider \cref{eq:lip_special_03}. 
%

\begin{theorem}[Error Recursion of \cref{alg:NewtonMR_Ex} with $ \alphak = 1 $]
	\label{thm:exact_local}
	Consider Assumptions \ref{assmpt:diff}, \ref{assmpt:pseudo_regularity}, and \ref{assmpt:null_space}. Suppose $ \xxk \in \sX_{0} $ and consider one iteration of \cref{alg:NewtonMR_Ex} with $ \alphak = 1 $.
	\begin{enumerate}[label = (\roman*)]
		\item \label{cond:thm:exact_local_01} If \cref{assmpt:lipschitz_special} holds, then
		\begin{align*}
			\vnorm{\bggkk}^{2} \leq \frac{2 L(\xx_{0})}{(1+\beta)[\gamma(\xxo)]^{1+\beta}} \vnorm{\bggk}^{1+\beta} + (1-2\nu(\xxo)) \vnorm{\bggk}^{2}.
		\end{align*}
		\item \label{cond:thm:exact_local_02} If \cref{assmpt:lip_special_03} holds and $ \HH(\xx) $ is continuous, then
		\begin{align*}
			\vnorm{\bggkk} \leq \frac{L_{\HH}}{(1+\beta) [\gamma(\xxo)]^{1+\beta}} \vnorm{\bggk}^{1+\beta}  + \sqrt{1-\nu(\xxo)} \vnorm{\bggk}.
		\end{align*}
	\end{enumerate}
	Here, $ L(\xx_{0}), \gamma(\xxo), \nu(\xxo) $ and $ L_{\HH} $ are defined, respectively, in Assumptions \ref{assmpt:lipschitz_special}, \ref{assmpt:pseudo_regularity}, \ref{assmpt:null_space}, and \ref{assmpt:lip_special_03}. Also, $\beta$ refers to the respective constants of Assumptions  \ref{assmpt:lipschitz_special} and \ref{assmpt:lip_special_03}.
\end{theorem}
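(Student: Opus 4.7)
Both parts rest on the same elementary identities. With $\alphak=1$ and $\ppk = -\HHk^{\dagger}\bggk$, we have $\HHk\ppk = -\HHk\HHk^{\dagger}\bggk = -\UUk\UUk^{T}\bggk$, so
\begin{align*}
\bggk + \HHk\ppk \;=\; (\eye - \UUk\UUk^{T})\bggk \;=\; \UUk^{\perp}\left(\UUk^{\perp}\right)^{T}\bggk.
\end{align*}
By \cref{assmpt:null_space} (via \cref{lemma:null_space_grad}), this vector has squared norm at most $(1-\nu)\|\bggk\|^{2}$, and by \cref{assmpt:pseudo_regularity}, $\|\ppk\| \leq \|\bggk\|/\gamma$.

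\textbf{Part (i).} The plan is to invoke the same auxiliary inequality (Lemma ``aux'') used to obtain \cref{eq:grad_upper_bound} in the proof of \cref{thm:exact}, but now with $\alpha=1$. Setting $\xx=\xxk$, $\yy=\xxkk$, we get
\begin{align*}
\|\bggkk\|^{2} \;\leq\; \|\bggk\|^{2} + 2\dotprod{\HHk\bggk,\ppk} + \tfrac{2 L(\xx_{0})}{1+\beta}\|\ppk\|^{1+\beta}.
\end{align*}
Using $\dotprod{\HHk\bggk,\ppk} = -\dotprod{\bggk,\HHk\HHk^{\dagger}\bggk} = -\|\UUk^{T}\bggk\|^{2}$ together with \cref{eq:null_space_grad_1} gives $\dotprod{\HHk\bggk,\ppk} \leq -\nu\|\bggk\|^{2}$, while $\|\ppk\|^{1+\beta} \leq \|\bggk\|^{1+\beta}/\gamma^{1+\beta}$ from \cref{assmpt:pseudo_regularity}. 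Substituting these two estimates yields the claimed recursion.

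\textbf{Part (ii).} Here the plan is to exploit the orthogonal-plus-remainder decomposition
\begin{align*}
\bggkk \;=\; \UUk^{\perp}\left(\UUk^{\perp}\right)^{T}\bggk \;+\; \rr_{k}, \qquad \rr_{k} \;\triangleq\; \int_{0}^{1}[\HH(\xxk+t\ppk) - \HHk]\ppk\,dt,
\end{align*}
so that by the triangle inequality $\|\bggkk\| \leq \sqrt{1-\nu}\,\|\bggk\| + \|\rr_{k}\|$. It then suffices to show $\|\rr_{k}\| \leq \frac{L_{\HH}}{1+\beta}\|\ppk\|^{1+\beta}$, because the bound $\|\ppk\|\leq \|\bggk\|/\gamma$ then delivers the stated inequality. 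To estimate $\|\rr_{k}\|$ I would take inner product with $\bggkk/\|\bggkk\|$ and apply \cref{assmpt:lip_special_03} along the segment $[\xxk,\xxkk]$: for each $t\in[0,1]$, setting $\xx=\xxk$ and $\pp=t\ppk$ (which lies in $\textnormal{Range}(\HHk)$ since $\ppk$ does), \cref{eq:lip_special_03} gives $\dotprod{\bgg(\xxk+t\ppk),[\HH(\xxk+t\ppk)-\HHk]\ppk} \leq L_{\HH}\,t^{\beta}\|\bgg(\xxk+t\ppk)\|\,\|\ppk\|^{1+\beta}$. The continuity of $\nabla^{2} f$ guarantees that $\bgg(\xxk+t\ppk)$ varies continuously and in particular approaches $\bggkk$ as $t\to 1$, which is what allows the pointwise estimate along the segment to be combined with $\bggkk$ on the left and integrated. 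The factor $\int_{0}^{1} t^{\beta}\,dt = 1/(1+\beta)$ produces the coefficient in the claim.

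\textbf{Main obstacle.} The tight spot is the Taylor-remainder bound in part (ii). Unlike the standard analyses based on Lipschitz Hessian (which immediately give an operator-norm bound $\|\HH(\xxk+t\ppk)-\HHk\|\leq L_{\HH}(t\|\ppk\|)^{\beta}$), \cref{assmpt:lip_special_03} only furnishes an inner-product estimate, and crucially one in which the \emph{same} point $\xx+\pp$ appears in both the gradient and the Hessian. Reconciling this with the integrand $\dotprod{\bggkk,[\HH(\xxk+t\ppk)-\HHk]\ppk}$, in which the gradient is fixed at $\xxkk$ while the Hessian floats over the segment, is the delicate step; continuity of $\nabla^{2}f$ is precisely what is needed to legitimize passing from the pointwise moving-gradient bound to a bound involving $\|\bggkk\|$.
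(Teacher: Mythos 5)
Part (i) of your proposal is exactly the paper's argument: apply the descent-lemma bound (the paper's \cref{eq:grad_upper_bound}) with $\alpha=1$, use $\dotprod{\HHk\bggk,\ppk}=-\vnorm{\UU_k^T\bggk}^2\le-\nu\vnorm{\bggk}^2$ and $\vnorm{\ppk}\le\vnorm{\bggk}/\gamma$. Nothing to add there.

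For part (ii) your decomposition $\bggkk=\UUk^{\perp}(\UUk^{\perp})^{T}\bggk+\rr_k$ with $\rr_k=\int_0^1[\HH(\xxk+t\ppk)-\HHk]\ppk\,dt$ is the same one the paper uses, but the way you close the argument has a genuine gap. You pass to $\vnorm{\bggkk}\le\sqrt{1-\nu}\,\vnorm{\bggk}+\vnorm{\rr_k}$ by the triangle inequality and then claim it suffices to show $\vnorm{\rr_k}\le\frac{L_{\HH}}{1+\beta}\vnorm{\ppk}^{1+\beta}$. But \cref{assmpt:lip_special_03} only provides an \emph{inner-product} estimate for $\dotprod{\nabla f(\xx+\pp),[\HH(\xx+\pp)-\HH(\xx)]\pp}$; it gives no control on the norm $\vnorm{[\HH(\xx+\pp)-\HH(\xx)]\pp}$. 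Taking the inner product of $\rr_k$ with $\bggkk/\vnorm{\bggkk}$, as you suggest, bounds only the component of $\rr_k$ along $\bggkk$ (by Cauchy--Schwarz it is a \emph{lower} bound for $\vnorm{\rr_k}$, not an upper bound), so the step ``estimate $\vnorm{\rr_k}$'' cannot be completed from the stated assumption. The repair is to avoid the triangle inequality altogether: expand $\vnorm{\bggkk}^2=\dotprod{\bggkk,\,\UUk^{\perp}(\UUk^{\perp})^{T}\bggk}+\dotprod{\bggkk,\rr_k}$, bound the first term by $\sqrt{1-\nu}\,\vnorm{\bggkk}\vnorm{\bggk}$ and the second by $\frac{L_{\HH}}{1+\beta}\vnorm{\bggkk}\vnorm{\ppk}^{1+\beta}$ via \cref{assmpt:lip_special_03} applied with $\pp=t\ppk$ (integrating $t^{\beta}$), then divide through by $\vnorm{\bggkk}$. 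This is precisely the paper's proof, and it only ever needs the projection of $\rr_k$ onto $\bggkk$, which is exactly what the assumption supplies. Your ``main obstacle'' paragraph correctly identifies the remaining delicacy — the assumption pins the gradient at the moving point $\xxk+t\ppk$ while the integrand pairs against the fixed $\bggkk$ — and the paper's own proof glosses over this same point, so you are not behind the paper there; but the triangle-inequality assembly must be replaced as above for the argument to go through.
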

\begin{proof}
	Recall that \cref{eq:least_norm_solution} and $ \alphak = 1 $ implies that $ \xxkk = \xxk + \ppk $, where we have $ \ppk = - \left[\HH \left( \xxk \right)\right]^{\dagger} \bgg \left( \xxk \right) $. Throughout the proof, let $\UU_{\xx}$ and $\UU^{\perp}_{\xx}$ denote any orthogonal bases for $\text{Range}(\HH(\xx))$ and its orthogonal complement, respectively.
	\noindent 
	(i) \; From \cref{eq:grad_upper_bound} with $ \alphak = 1 $ and using Assumptions \ref{assmpt:pseudo_regularity} and \ref{assmpt:null_space}, we get
		\begin{align*}
			\vnorm{\bggkk}^{2} &\leq \vnorm{\bggk}^{2} - 2\vnorm{\UU_{k}^{\intercal} \bggk}^{2} + \frac{ 2  L(\xx_{0})}{(\beta+1)[\gamma(\xxo)]^{\beta+1}} \vnorm{\bggk}^{\beta+1} \\
			&\leq (1-2\nu(\xxo)) \vnorm{\bggk}^{2} + \frac{2 L(\xx_{0})}{(1+\beta)[\gamma(\xxo)]^{1+\beta}} \vnorm{\bggk}^{1+\beta}.
		\end{align*}
		
	\noindent 
	(ii) \; Since by assumption $ \bgg(\xx) $ is continuously differentiable, using mean-value theorem for vector-valued functions (\cite[Theorem 7.9-1(d)]{ciarlet2013linear}) for $ \bggkk = \bgg \left(\xxk + \ppk \right) $, we have
	\begin{align}
		\label{eq:mvt}
		\vnorm{\bggkk}^{2} &= \dotprod{\bggkk,\bggkk} = \dotprod{\bggkk,\bggk + \int_{0}^{1} \left[\HH\left(\xxk + t \ppk \right) \ppk \right] \df t }.
	\end{align}
	Note that
	\begin{align*}
		\bgg \left( \xxk \right) &= \HH \left( \xxk \right) \left[\HH \left( \xxk \right)\right]^{\dagger} \bgg \left( \xxk \right) + \UU_{\xx}^{\perp} \left[\UU_{\xx}^{\perp}\right]^{\intercal} \bgg \left( \xxk \right)  = -\HH \left( \xxk \right) \ppk + \UU_{\xx}^{\perp} \left[\UU_{\xx}^{\perp}\right]^{\intercal} \bgg \left( \xxk \right). 
	\end{align*}
	Hence, it follows that 
	\begin{align*}
		\vnorm{\bggkk}^{2} &= \dotprod{\bggkk, -\HH \left( \xxk \right) \ppk + \int_{0}^{1} \left[\HH\left(\xxk + t \ppk \right) \ppk \right] \df t } + \dotprod{\bggkk, \UU_{\xx}^{\perp} \left[\UU_{\xx}^{\perp}\right]^{\intercal} \bgg \left( \xxk \right)} \\
		&\leq \dotprod{\bggkk, \int_{0}^{1} \left[ \left( \HH\left(\xxk + t \ppk \right) -\HH \left( \xxk \right) \right) \ppk \right] \df t } + \vnorm{\bggkk}\vnorm{\left[\UU_{\xx}^{\perp}\right]^{\intercal} \bgg \left( \xxk \right)} \\
		&\leq \int_{0}^{1} t^{-1} \left[ \dotprod{\bggkk, \left( \HH\left(\xxk + t \ppk \right) -\HH \left( \xxk \right) \right) t \ppk} \right] \df t + \sqrt{1-\nu(\xxo)} \vnorm{\bggkk} \vnorm{\bggk},
	\end{align*}
	where the inequality follows by \cref{lemma:null_space_grad}. Using Assumptions \ref{assmpt:pseudo_regularity} and \ref{assmpt:lip_special_03}, we get
	\begin{align*}
		\vnorm{\bggkk}^{2} &\leq L_{\HH} \vnorm{\bggkk} \vnorm{\ppk}^{1+\beta} \int_{0}^{1} t^{\beta} \df t   + \sqrt{1-\nu(\xxo)} \vnorm{\bggkk} \vnorm{\bggk},
	\end{align*}
	and hence,
	\begin{align*}
		\vnorm{\bggkk} &\leq \frac{L_{\HH}}{(1+\beta)} \vnorm{\ppk}^{1+\beta} + \sqrt{1-\nu(\xxo)} \vnorm{\bggk} \leq \frac{L_{\HH}}{(1+\beta)} \vnorm{\left[\HHk\right]^{\dagger} \bggk}^{1+\beta}   + \sqrt{1-\nu(\xxo)} \vnorm{\bggk} \\
		& \leq \frac{L_{\HH}}{(1+\beta) [\gamma(\xxo)]^{1+\beta}} \vnorm{\bggk}^{1+\beta}  + \sqrt{1-\nu(\xxo)} \vnorm{\bggk}. 
	\end{align*}
\end{proof}

\begin{remark}
	\label{rem:local_super_linear}
	\cref{thm:exact_local}-\ref{cond:thm:exact_local_02} implies convergence in gradient norm with a local rate that is linear when $ \nu(\xxo) < 1 $, super-linear when $ \nu(\xxo) = 1 $, and quadratic when $ \nu(\xxo) = \beta = 1 $. For example, when $ \nu(\xxo) < 1 $, for any given $ \sqrt{1-\nu(\xxo)} < c < 1 $, if $\vnorm{\bggk} \leq \left( {(c - \sqrt{1-\nu(\xxo)}) (1+\beta) [\gamma(\xxo)]^{1+\beta}}/{L_{\HH}} \right)^{1/\beta}$, we get $\vnorm{\bggkk} \leq c \vnorm{\bggk}$.
	In other words, the local convergence rate is \emph{problem-independent}, which is a similar characteristic to that of exact Newton-CG; see \cite{ssn2018}. 
	If $ \nu(\xxo) = \beta = 1 $, then \cref{thm:exact_local}-\ref{cond:thm:exact_local_02} implies that
	\begin{align}
		\label{eq:quadratic_grad}
		\vnorm{\bggkk} \le \frac{L_{\HH}}{2 \gamma(\xxo)^{2}} \vnorm{\bggk}^{2},
	\end{align}
	which greatly resembles the quadratic convergence of Newton-CG, but in terms of $ \vnorm{\bggk} $ in lieu of $ \vnorm{\xxk - \xx^{\star}} $. For strongly-convex objectives, \cref{eq:quadratic_grad} coincides exactly with the well-known bound on $ \vnorm{\bgg} $ in the literature, e.g., see \cite[Eqn.\ (9.33)]{boyd2004convex}. 
	For comparison, the local convergence rate of the Newton-type method proposed in \cite{mishchenko2021regularized} for strongly-convex objectives is superlinear.
\end{remark}

\subsubsection{Inexact Update}
\label{sec:inexact}
Clearly, in almost all practical application, it is rather unreasonable to assume that \cref{eq:least_norm_solution} can be solved exactly. Approximations to \cref{eq:least_norm_solution}, in the similar literature, is typically done by requiring 
\begin{align}
	\label{eq:update_inexact_cg}	
	\vnorm{\HHk \ppk + \bggk} \leq \theta \vnorm{\bggk},
\end{align} 
for some appropriate $ 0 \leq \theta < 1 $. In other words, we can simply require that an approximate solution $ \ppk $ is, at least, better than ``$ \pp = \bm{0} $'' by some factor. Such inexactness conditions have long been used in the analysis of Newton-CG, e.g., see \cite{nocedal2006numerical,byrd2011use,ssn2018,bollapragada2016exact}. However, Newton-MR allows for further relaxation of this condition. Indeed,  as we will later see in this section, we can further loosen \cref{eq:update_inexact_cg} by merely requiring that an approximate solution satisfies
\begin{align}
	\label{eq:update_inexact}
	\dotprod{\HHk \ppk, \bggk } \leq -\frac{(1-\theta)}{2} \vnorm{\bggk}^{2}.
\end{align} 
It is easy to see that \cref{eq:update_inexact_cg} implies \cref{eq:update_inexact}. In our theoretical analysis below, we will employ \cref{eq:pseudo_regularity_lower} and hence we need to ensure that for the update directions, $ \ppk $, we have $ \ppk \in \range (\HHk) $. In this light, we introduce the following relaxation of \cref{eq:update_inexact_cg}
\begin{align}
	\label{eq:update_inexact_range}
	\text{Find} ~~ \pp \in \textnormal{Range}(\HHk) \quad \text{s.t.} \quad  \text{$ \pp  $ satisfies \cref{eq:update_inexact}}.
\end{align}

The inexactness condition in \cref{eq:update_inexact_range} involves two criteria for an approximate solution $ \pp $, namely feasibility of $ \pp $ in \cref{eq:update_inexact} and that $ \pp \in \textnormal{Range}(\HHk) $. To seamlessly enforce the latter, recall that the $ t\th $ iteration of MINRES-QLP can be described as follows \cite[Table 5.1 and Eqn (3.1)]{choi2011minres}
\begin{align}
	\label{eq:minres_qlp}
	\pp^{(t)} = \argmin_{\pp \in \real^{d}} & \; \|\pp\|, \quad \text{subject to} \hspace{0.5cm} \pp \in \Argmin_{\widehat{\pp} \in \mathcal{K}_{t}(\HH, \bgg)} \vnorm{\HH \widehat{\pp} + \bgg},
\end{align}
where $ \mathcal{K}_{t}(\AA, \bb) = \text{Span}\left\{\bb,\AA \bb, \ldots, \AA^{t-1} \bb \right\}$ denotes the $t\th$-Krylov sub-space generated by $ \AA $ and $ \bb$ with $ t \leq r \triangleq \text{rank}\left(\AA\right) $. 
If $ \nu(\xxo) = 1 $ in \cref{assmpt:null_space}, then we necessarily have $ \bgg \in \textnormal{Range}(\HH) $, and hence $ \pp^{(t)} \in \textnormal{Range}(\HH), \forall t $. 
Otherwise, when $ \nu(\xxo) < 1 $ in \cref{assmpt:null_space}, which implies $ \bgg \notin \textnormal{Range}(\HH) $, we cannot necessarily expect to have $ \pp^{(t)} \in \textnormal{Range}(\HH) $. However, one can easily remedy this by modifying MINRES-QLP iterations to incorporate $ \mathcal{K}_{t}(\HH, \HH\bgg) $ instead of $ \mathcal{K}_{t}(\HH, \bgg) $. Compared with \cref{eq:minres_qlp}, this essentially boils down to performing one additional matrix-vector product to compute the vector $ \HH\bgg $, which is then normalized and used as the initial vector within the Lanczos process; see \cite{hanke2017conjugate,calvetti2000curve} for similar modifications applied to MINRES. 
%
Clearly, this will not change the guarantees of MINRES-QLP regarding the monotonicity of the residuals as well as the final solution at termination, i.e., $ \pp^{\dagger} = - \left[\HH \right]^{\dagger} \bgg  $. 
Since for all $ t $, we have $ \pp_{t} \in \textnormal{Range}(\HH) $, it follows that any feasible $ \pp_{t} $ satisfies \cref{eq:update_inexact_range}. 
We note that, in general, the Krylov subspace methods that define their iterates in $\HH \cdot \mathcal{K}_t(\HH, \bgg)$, as opposed to $\mathcal{K}_t(\HH, \bgg)$, can have a slower convergence, e.g., see \cite{estrin2019euclidean}. However, in our experience, such a side-effect can be negligible given the relatively small number of iterations that is often required to satisfy the inexactness condition \cref{eq:update_inexact}, as opposed to \cref{eq:update_inexact_cg}; see also \cref{rem:rel_res}.

For the analysis of Newton-type methods, to the best of our knowledge, \cref{eq:update_inexact_range} has never been considered before and constitutes the most relaxed inexactness condition on the sub-problems in the similar literature. 
\begin{remark}[\cref{eq:update_inexact} is more relaxed than \cref{eq:update_inexact_cg}]
	\label{rem:rel_res}
	For a fixed $ \theta $, the relative residual of any solution to \cref{eq:update_inexact} is often much larger than that required by \cref{eq:update_inexact_cg}. Indeed, from \cref{eq:minres_qlp} and \cite[Lemma 3.3]{choi2011minres}, we have $\dotprod{\pp_{k}, \HHk \left( \HHk \ppk + \bggk \right)} = 0$. Now, from \cref{eq:update_inexact} we get 
	\begin{align*}
		(1+\theta) \vnorm{\bggk}^{2} &\geq 2 \dotprod{\bggk, \HHk \ppk + \bggk} & \text{(From \cref{eq:update_inexact})}\\
		& = 2 \dotprod{\bggk, \HHk \ppk + \bggk} + 2 \dotprod{\HHk \ppk, \left( \HHk \ppk + \bggk \right)} & \text{($ \dotprod{\ppk, \HHk \left( \HHk \ppk + \bggk \right)} = 0 $)}\\
		&= 2 \dotprod{\bggk+\HHk \ppk, \HHk \ppk + \bggk} = 2 \vnorm{\HHk \ppk + \bggk}^{2},
	\end{align*}
	which implies 
	\begin{align}
		\label{eq:rel_res_theta}
		\vnorm{\HHk \ppk + \bggk} &\leq \sqrt{\frac{1+\theta}{2}} \vnorm{\bggk}.
	\end{align}
	Hence, \cref{eq:update_inexact} is equivalent to requiring the relative residual condition albeit with the tolerance of $ \sqrt{(1+\theta)/2} $.
	This in turn implies that for a given $ \theta $, we can satisfy \cref{eq:update_inexact_range} in roughly less than \emph{half} as many iterations as what is needed to ensure the more stringent \cref{eq:update_inexact_cg}.
\end{remark}

The inexactness tolerance has to be chosen with regards to \cref{assmpt:null_space}. Indeed, under \cref{assmpt:null_space}, for the exact solution, $ \ppk = -\left[\HHk\right]^{\dagger} \bggk $, we have $\dotprod{\HHk \ppk, \bggk } = - \langle \HHk \left[\HHk\right]^{\dagger} \bggk, \bggk \rangle \leq -\nu(\xxo) \vnorm{\bggk}^{2}$.
Hence, it suffices to choose $ \theta $ in \cref{eq:update_inexact} such that $ \theta \geq 1-2\nu(\xxo)  $. 
\begin{condition}[Inexactness Tolerance $ \theta $]
	\label{cond:inexact_tolerance}
	The inexactness tolerance in, $ \theta $, in \cref{eq:update_inexact} is chosen such that $\theta \in \left[1-2\nu(\xxo),1\right)$, where $ \nu(\xxo) $ is as in \cref{assmpt:null_space}. 
\end{condition}
Of course, because of $ \nu(\xxo) $, the interval in \cref{cond:inexact_tolerance} also depends on $ \xxo $, which consequently affect the range of possible choices for $ \theta $. In this sense, the proper notation for inexactness tolerance is $ \theta(\xxo) $. However, to simplify our results, we drop the dependence of $ \theta(\xxo) $ on $ \xxo $.  Also, note that if $ \nu(\xxo) > 1/2 $, then we can take $ \theta $ to be negative.

Replacing the exact update \cref{eq:least_norm_solution} in \cref{alg:NewtonMR_Ex} with the inexact condition \cref{eq:update_inexact_range} and considering \cref{cond:inexact_tolerance}, we get an inexact variant of Newton-MR, depicted in \cref{alg:NewtonMR}, and \cref{thm:inexact} provides its convergence properties.
\begin{algorithm}
	\caption{Inexact Newton-MR}
	\begin{algorithmic}
		\vspace{1mm}
		\STATE \textbf{Input:} 
		\begin{itemize}[label=-]
			\item Initial iterate $\xx_{0} \in \real^{d}$, Inexactness tolerance $ \theta $ as in \cref{cond:inexact_tolerance}, Line-search parameter $0 < \rho < 1$, Termination tolerance $0 < \epsilon$
		\end{itemize}
		\vspace{1mm}
		\FOR {$k = 0,1,2, \cdots$ until $ \vnorm{\bggk} \leq \epsilon $} 
		\vspace{1mm}
		\STATE Find $ \ppk $ using \cref{eq:update_inexact_range} with inexactness tolerance $ \theta $ 
		\vspace{1mm}
		\STATE \label{alg:step:alphak:inexact} Find step-size, $\alphak$, such that \eqref{eq:armijo_gen} holds with $ \rho $
		\vspace{1mm}
		\STATE Set $\xxkk  =  \xxk + \alphak \ppk$
		\vspace{1mm}
		\ENDFOR
		\vspace{1mm}
		\STATE \textbf{Output:} $\xx$ for which $ \vnorm{\bgg(\xx)} \leq \epsilon$
	\end{algorithmic}
	\label{alg:NewtonMR}
\end{algorithm}

\begin{theorem}[Convergence of \cref{alg:NewtonMR}]
	\label{thm:inexact}
	Consider \cref{assmpt:diff,assmpt:lipschitz_special,assmpt:pseudo_regularity,assmpt:null_space}. For the iterates of \cref{alg:NewtonMR}, we have 
	\begin{align}
		\label{eq:master_convergence_inexact}
		\vnorm{\bggkk}^2 \le \left( 1 - 2 \rho \widehat{\tau}(\xxo)\vnorm{\bggk}^{(1-\beta)/\beta} \right)\vnorm{\bggk}^2,
	\end{align}
	where $\widehat{\tau}(\xxo) \triangleq \tau(\xxo) \left[ \left(1-\theta\right)/\left( 2 \nu(\xxo)\right) \right]^{{(1+\beta)}/{\beta}}$,	and $ \rho $ is the line-search parameter of \cref{alg:NewtonMR}, $ \nu(\xxo) $ is defined in \cref{assmpt:null_space}, $\tau(\xxo)$ is as defined in \cref{thm:exact} and $ \theta $ is given in \cref{cond:inexact_tolerance}.
\end{theorem}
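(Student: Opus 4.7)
The plan is to mirror the proof of \cref{thm:exact}, replacing the two places that used the closed-form identity $\ppk=-[\HHk]^{\dagger}\bggk$ by appeals to the inexactness criteria \cref{eq:update_inexact} together with the range membership $\ppk\in\text{Range}(\HHk)$, which is built into \cref{eq:update_inexact_range}. First, I would observe that combining the Armijo condition \cref{eq:armijo_gen} with the angle bound \cref{eq:update_inexact_angle} already yields
\begin{align*}
\vnorm{\bggkk}^{2} \le \vnorm{\bggk}^{2} + 2\rho\alphak\dotprod{\ppk,\HHk\bggk} \le \vnorm{\bggk}^{2} - 2\rho\alphak(1-\theta)\vnorm{\bggk}^{2},
\end{align*}
so all that remains is to produce a lower bound $\alphak\ge\alpha_{0}$ with $\alpha_{0}$ proportional to $\vnorm{\bggk}^{(1-\beta)/\beta}$.

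To secure that lower bound, apply \cref{lemma:aux} with $h(\xx)=\vnorm{\bgg(\xx)}^{2}/2$ under \cref{assmpt:lipschitz_special} to get
\begin{align*}
\vnorm{\bggkk}^{2} \le \vnorm{\bggk}^{2} + 2\alpha\dotprod{\HHk\bggk,\ppk} + \frac{2\alpha^{\beta+1}L(\xx_{0})}{\beta+1}\vnorm{\ppk}^{\beta+1}.
\end{align*}
The sole adaptation relative to \cref{thm:exact} is the bound on $\vnorm{\ppk}$. Since $\ppk\in\text{Range}(\HHk)$, \cref{lemma:pseudo_regularity} gives $\gamma\vnorm{\ppk}\le\vnorm{\HHk\ppk}$, and the residual bound \cref{eq:update_inexact_norm} then yields $\vnorm{\ppk}\le(1+\theta)\vnorm{\bggk}/\gamma$. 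Inserting this and again using \cref{eq:update_inexact_angle} on the middle term, the sufficient condition for \cref{eq:armijo_gen} at step-size $\alpha$ reduces to
\begin{align*}
\alpha^{\beta} \le \frac{(1-\rho)(1-\theta)(1+\beta)\gamma^{\beta+1}}{L(\xx_{0})(1+\theta)^{\beta+1}}\vnorm{\bggk}^{1-\beta},
\end{align*}
so the $\beta$-th root of the right-hand side furnishes $\alpha_{0}$. Plugging $\alphak\ge\alpha_{0}$ back into the first display and simplifying gives exactly $2\rho(1-\theta)\alpha_{0}=2\rho\hat\tau\vnorm{\bggk}^{(1-\beta)/\beta}$, because the factor $\nu^{1+\beta}$ hidden inside $\tau$ is cancelled by the $\nu^{-(1+\beta)}$ coefficient sitting outside $\tau$ in the definition of $\hat\tau$, leaving only a $(1-\theta)^{\beta+1}/(1+\theta)^{\beta+1}$ residue, which is precisely \cref{eq:master_convergence_inexact}.

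The one subtle step, and the main obstacle, is guaranteeing $\ppk\in\text{Range}(\HHk)$: without this, \cref{lemma:pseudo_regularity} is unavailable and $\vnorm{\ppk}$ cannot be controlled by the residual bound, so an inexact solver could in principle return a direction that blows up the $\vnorm{\ppk}^{\beta+1}$ term. This is precisely the reason for the modified Krylov recurrence $\mathcal{K}_{t}(\HHk,\HHk\bggk)$ described in \cref{sec:inexact}; once range membership is certified at every inner iteration, the remainder of the argument is routine. A minor bookkeeping issue is that a backtracking line-search may return $\alphak$ a constant factor below $\alpha_{0}$, which is harmless as it only rescales $\hat\tau$ without altering the $\vnorm{\bggk}^{(1-\beta)/\beta}$ scaling of the rate.
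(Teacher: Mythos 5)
Your proposal is correct and follows essentially the same route as the paper's proof: it reuses the bound \cref{eq:grad_upper_bound}, controls $\vnorm{\ppk}$ via $\ppk\in\text{Range}(\HHk)$ together with \cref{lemma:pseudo_regularity} and \cref{eq:update_inexact_norm}, and derives the same step-size lower bound before combining with \cref{eq:update_inexact_angle} and \cref{eq:armijo_gen}. The final bookkeeping identifying $2\rho(1-\theta)\alpha_{0}$ with $2\rho\hat\tau\vnorm{\bggk}^{(1-\beta)/\beta}$ checks out against the definition of $\hat\tau$.
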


\begin{proof}
	Similar to the proof of \cref{thm:exact}, we get \cref{eq:grad_upper_bound}. By \cite[Lemma 3.3 and Section 6.6]{choi2014algorithm}, we know that $ \vnorm{\HHk \ppk} $ is monotonically non-decreasing in MINRES-type solvers, i.e., $\vnorm{\HHk \ppk} \leq \vnorm{\HHk \left[\HHk\right]^{\dagger} \bggk} \leq \vnorm{\bggk}$. Since $ \ppk \in \textnormal{Range}(\HHk) $, from \cref{assmpt:pseudo_regularity}  it follows that 
	\begin{align}
		\label{eq:update_upper_bound}
		\vnorm{\ppk} \leq \frac{1}{\gamma(\xxo)} \vnorm{\HHk \ppk} \leq  \frac{1}{\gamma(\xxo) } \vnorm{\bggk}.
	\end{align}
	This, in turn, implies that
	\begin{align*}
		\vnorm{\bggkk}^{2} \leq \vnorm{\bggk}^{2} + 2 \alpha \dotprod{\HHk \bggk, \ppk}  + \frac{2 L(\xx_{0}) \alpha^{\beta+1} }{(\beta+1) [\gamma(\xxo)]^{\beta+1}} \vnorm{\bggk}^{\beta+1}.
	\end{align*}
	Now, as in the the proof of \cref{thm:exact}, to obtain a lower bound on the step-size returned from the line-search, using the above and \cref{eq:armijo_gen}, we consider $ \alpha $ satisfying $
	2 L(\xx_{0}) \alpha^{\beta+1} \vnorm{\bggk}^{\beta+1} \leq (\beta+1) [\gamma(\xxo)]^{\beta+1} \alpha (1-\rho) (1-\theta)\vnorm{\bggk}^2$.
	This, in turn, implies that the step-size returned from the line-search \cref{eq:armijo_gen} must be such that $\alpha \geq \left( {(1-\theta)(1-\rho) (1+\beta) [\gamma(\xxo)]^{1+\beta} }/{(2 L(\xx_{0}))} \right)^{1/\beta} \vnorm{\bggk}^{(1-\beta)/\beta}$.
	With this lower-bound on the step-size, we obtain the desired result by noting that $\vnorm{\bggk}^{2} + 2 \rho \alpha \dotprod{\ppk, \HHk \bggk} \leq \left( 1 - \rho \alpha (1-\theta)\right) \vnorm{\bggk}^2$.
\end{proof}

Recalling that $ 1-\theta \leq 2 \nu(\xxo) $ from \cref{cond:inexact_tolerance}, we can obtain a bound similar to \cref{eq:meaningful_rate} with $ \widehat{\tau}(\xxo) $ replacing $ \tau(\xxo) $, for the rate given by \cref{thm:inexact}.

\begin{remark}
	\label{rem:global_linear_inexact}
	For $ \theta = 1 - 2 \nu(\xxo) $, i.e., when sub-problems are solved exactly, \cref{thm:inexact} coincides with \cref{thm:exact}. More generally, for any $ \nu(\xxo) \in (0,1]$ and $ \theta \in [1-2 \nu(\xxo),1)$, we get a rate similar, up to a constant factor of $ (1-\theta)/(2 \nu(\xxo)) $,  to that of the exact update in \cref{thm:exact}, i.e., \emph{the effects of the problem-related quantities such as $ L(\xx_{0}) $ and $ \gamma(\xxo) $ remain the same}. This is in contrast to Newton-CG, for which in order to obtain a rate similar to that of the exact algorithm, one has to solve the linear system to a high enough accuracy, i.e., $ \theta \leq \sqrt{\kappa} $, where $ \kappa $ is the condition number of the problem. Otherwise, the dependence of the convergence rate on the problem-related quantities is significantly worsened; see \cite[Theorem 2]{ssn2018}. 
\end{remark}

Similar local convergence results as in \cref{thm:exact_local} can also be obtained for the case where the update direction, $ \ppk $, is obtained approximately. Note that, again as in \cref{rem:global_linear_inexact}, when $ \theta = 1-2 \nu(\xxo) $, the results of \cref{thm:inexact_local} coincide with those of \cref{thm:exact_local}.

\begin{theorem}[Error Recursion of \cref{alg:NewtonMR} with $ \alphak = 1 $]
	\label{thm:inexact_local}
	Under the same assumptions of \cref{thm:exact_local}, suppose $ \xxk \in \sX_{0} $, and consider one iteration of \cref{alg:NewtonMR} with $ \alphak = 1 $. 
	\begin{enumerate}[label = (\roman*)]
		\item \label{cond:thm:inexact_local_01} If \cref{assmpt:lipschitz_special} holds, then
		\begin{align*}
			\vnorm{\bggkk}^{2} \leq \frac{2 L(\xx_{0})}{(1+\beta)[\gamma(\xxo)]^{1+\beta}} \vnorm{\bggk}^{1+\beta} + \theta \vnorm{\bggk}^{2}.
		\end{align*}
		\item \label{cond:thm:inexact_local_02} If \cref{assmpt:lip_special_03} holds and $ \HH(\xx) $ is continuous, then
		\begin{align*}
			\vnorm{\bggkk} &\leq \frac{L_{\HH}}{(1+\beta) [\gamma(\xxo)]^{1+\beta}} \vnorm{\bggk}^{1+\beta} + \sqrt{\frac{1+\theta}{2}} \vnorm{\bggk}.
		\end{align*}
	\end{enumerate}
	Here, $ L(\xx_{0}), \gamma(\xxo) $ and $ L_{\HH} $ are defined, respectively, in Assumptions \ref{assmpt:lipschitz_special}, \ref{assmpt:pseudo_regularity}, and \ref{assmpt:lip_special_03}, $\beta$ refers to the respective constants of Assumptions  \ref{assmpt:lipschitz_special} and \ref{assmpt:lip_special_03}, and $ \theta $ is an inexactness tolerance chosen according to \cref{cond:inexact_tolerance}.
\end{theorem}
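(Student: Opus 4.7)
The plan is to replay the proofs of \cref{thm:exact_local}, substituting for the two estimates that used the exact update $\ppk = -[\HHk]^{\dagger} \bggk$ their inexact analogues implied by \cref{eq:update_inexact_range_02}. Starting from $\vnorm{\HHk \ppk + \bggk} \le \theta \vnorm{\bggk}$, I first extract three ingredients: squaring and rearranging gives $2\dotprod{\HHk \ppk, \bggk} \le (\theta^2-1)\vnorm{\bggk}^2 - \vnorm{\HHk \ppk}^2$, which in particular implies the descent inequality $\dotprod{\HHk \bggk, \ppk} \le -(1-\theta)\vnorm{\bggk}^2$ (precisely the angle condition \cref{eq:update_inexact_angle}); the triangle inequality yields $\vnorm{\HHk \ppk} \le (1+\theta)\vnorm{\bggk}$; and, because $\ppk \in \textnormal{Range}(\HHk)$, combining \cref{eq:pseudo_regularity_lower} with the previous bound gives $\vnorm{\ppk} \le (1+\theta)\vnorm{\bggk}/\gamma$.

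For Part (i), I would start from inequality \cref{eq:grad_upper_bound} evaluated at $\alpha = 1$, which is the direct consequence of \cref{assmpt:lipschitz_special} via \cref{lemma:aux} applied to $h(\xx) = \vnorm{\bgg(\xx)}^2/2$, namely $\vnorm{\bggkk}^2 \le \vnorm{\bggk}^2 + 2\dotprod{\HHk \bggk, \ppk} + \tfrac{2 L(\xx_0)}{\beta+1}\vnorm{\ppk}^{\beta+1}$. Substituting the descent estimate into the cross term and the bound on $\vnorm{\ppk}$ into the last term, and collecting via $1 - 2(1-\theta) = 2\theta - 1$, produces exactly the stated inequality.

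For Part (ii), I would reproduce the mean-value-theorem argument used in the proof of \cref{thm:exact_local}, the one structural change being that the range/null decomposition $\bggk = -\HHk \ppk + \UU_{\xx}^{\perp}[\UU_{\xx}^{\perp}]^{T}\bggk$ employed there (valid only because $\ppk$ was the minimum-norm exact solution) is replaced by the purely algebraic decomposition $\bggk = -\HHk \ppk + (\bggk + \HHk \ppk)$. Writing $\bggkk = \bggk + \int_0^1 \nabla^2 f(\xxk + t\ppk)\,\ppk\, \df t$ via the vector-valued mean-value theorem and combining these two identities yields $\vnorm{\bggkk}^2 = \dotprod{\bggkk,\, \int_0^1 [\nabla^2 f(\xxk + t\ppk) - \HHk]\ppk\,\df t} + \dotprod{\bggkk,\, \bggk + \HHk \ppk}$. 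The integral term is controlled, as in the exact proof, by $\tfrac{L_{\HH}}{1+\beta}\vnorm{\bggkk}\vnorm{\ppk}^{1+\beta}$ using \cref{assmpt:lip_special_03} (with $t\ppk \in \textnormal{Range}(\HHk)$); the residual term is bounded by Cauchy--Schwarz together with \cref{eq:update_inexact_range_02} by $\theta\vnorm{\bggkk}\vnorm{\bggk}$, replacing the $\sqrt{1-\nu}\vnorm{\bggkk}\vnorm{\bggk}$ appearing in the exact analysis. Dividing through by $\vnorm{\bggkk}$ and inserting $\vnorm{\ppk} \le (1+\theta)\vnorm{\bggk}/\gamma$ delivers the claim.

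The only real obstacle is bookkeeping: tracking how the $\sqrt{1-\nu}$ factor of the exact proof (which arose from bounding the component of $\bggk$ orthogonal to $\textnormal{Range}(\HHk)$ via \cref{assmpt:null_space}) is cleanly swapped for the tolerance $\theta$, and how the extra factor of $(1+\theta)$ enters every occurrence of $\vnorm{\ppk}$. This swap is licit precisely because \cref{cond:inexact_tolerance} requires $\theta \ge \sqrt{1-\nu}$, so the inexact residual dominates the exact one; beyond this, no new analytic ingredients beyond those already used in the proofs of \cref{thm:exact_local} and \cref{thm:inexact} are needed.
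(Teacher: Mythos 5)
Your proposal is correct and follows essentially the same route as the paper: part (i) is \cref{eq:grad_upper_bound} at $\alphak=1$ combined with the angle/norm consequences of \cref{eq:update_inexact_range_02} and the bound $\vnorm{\ppk}\leq(1+\theta)\vnorm{\bggk}/\gamma$ from \cref{eq:update_upper_bound}, and part (ii) is the mean-value-theorem argument of \cref{thm:exact_local} with the decomposition $\bggk = -\HHk\ppk + (\HHk\ppk+\bggk)$ replacing the range/null-space splitting, exactly as the paper does. The only cosmetic remark is that \cref{cond:inexact_tolerance} ($\theta\geq\sqrt{1-\nu}$) is needed for feasibility of the sub-problem rather than for the validity of the $\sqrt{1-\nu}\to\theta$ swap, which follows directly from Cauchy--Schwarz and the residual bound.
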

\begin{proof}
	The proof is very similar to that of \cref{thm:exact_local}. (i) This follows immediately from \cref{eq:grad_upper_bound} with $ \alphak = 1 $ and using \cref{eq:update_inexact} coupled with \cref{eq:update_upper_bound}. (ii) We first replace $ \bggk = -\HHk \ppk +\left(\HHk \ppk + \bggk\right) $ in \cref{eq:mvt}. As in the proof \cref{thm:exact_local}-\ref{cond:thm:exact_local_02}, from \cref{eq:rel_res_theta}, we get $\vnorm{\bggkk} \leq L_{\HH} \vnorm{\ppk}^{1+\beta}/(1+\beta) + \sqrt{(1+\theta)/2} \vnorm{\bggk}$, which using \cref{eq:update_upper_bound} gives the result. 
\end{proof}

\begin{remark}
	\label{rem:local_super_linear_inexact}
	Here, as in \cref{rem:local_super_linear}, one can obtain local linear convergence rate that is problem-independent. 
	For example, from \cref{thm:inexact_local}-\ref{cond:thm:inexact_local_02}, it follows that if $ \sqrt{(1+\theta)/2} < c < 1 $ and the gradient is small enough, we get $\vnorm{\bggkk} \leq c \vnorm{\bggk}$;  see \cite[Section 6]{kelley1995iterative} for a similar statement in the context of more general nonlinear equations.
	This implies that local convergence in the norm of the gradient is very fast even with a very crude solution of the sub-problem. 
	We also note that if $ \nu(\xxo)=\beta=1 $, and $ \theta_{k \in \mathcal{O}(\vnorm{\bggk}^{2}) - 1} $, then \cref{thm:inexact_local}-\ref{cond:thm:inexact_local_02} implies a quadratic convergence rate. 
\end{remark}

Finally, we can obtain iteration complexity to find a solution satisfying $ \vnorm{\bgg(\xx)} \leq \epsilon $ for a desired $ \epsilon > 0 $. 
\begin{corollary}[Iteration Complexity of \cref{alg:NewtonMR}]
	\label{corollary:iter_complexity}
	Under the same assumptions as in \cref{thm:inexact}, consider finding an iterate for which $ \vnorm{\bggk} \leq \epsilon $ for a desired $ \epsilon > 0 $. Then, we have the following two case:
	\begin{enumerate}[label = \textbf{(\roman{*})}]
		\smallskip \item With $\beta \ge 1$ in \cref{assmpt:lipschitz_special} then $\displaystyle k \in \bigO{\log({1}/{\epsilon})}$.
		\smallskip \item With $0 < \beta < 1$ in \cref{assmpt:lipschitz_special}, then $\displaystyle k \in \bigO{\epsilon^{{(\beta-1)}/{\beta}} \log({1}/{\epsilon}) }$. 
	\end{enumerate} 
\end{corollary}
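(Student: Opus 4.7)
The plan is to bootstrap the per-iteration recursion
\begin{align*}
\vnorm{\bggkk}^{2} \le \left(1 - 2\rho\hat\tau \vnorm{\bggk}^{(1-\beta)/\beta}\right)\vnorm{\bggk}^{2}
\end{align*}
of \cref{thm:inexact} into an iteration count, splitting into the two regimes according to the sign of the exponent $(1-\beta)/\beta$. Throughout, I would use the monotonicity \cref{eq:grad_descent}, i.e., $\vnorm{\bggk}\le\vnorm{\bgg_{0}}$ for all $k$, and invoke \cref{rem:meaningful_rate} to keep the rate factor strictly inside $(0,1)$.

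For case (i), $\beta\ge 1$, the exponent $(1-\beta)/\beta$ is nonpositive, so monotonicity gives
\begin{align*}
\vnorm{\bggk}^{(1-\beta)/\beta} \;\ge\; \vnorm{\bgg_{0}}^{(1-\beta)/\beta} \;\defeqr\; C_{0} > 0,
\end{align*}
and the recursion becomes a uniform geometric contraction $\vnorm{\bggkk}^{2}\le(1-2\rho\hat\tau C_{0})\vnorm{\bggk}^{2}$, with the factor in $(0,1)$ by \cref{rem:meaningful_rate}. Telescoping and asking for $\vnorm{\bggk}\le\epsilon$ immediately yields $k\in\bigO{\log(1/\epsilon)}$.

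For case (ii), $0<\beta<1$, the exponent is positive, so the per-step rate degrades as $\vnorm{\bggk}$ shrinks. The idea is a standard ``stopping-time'' argument: for every $k$ preceding the first index with $\vnorm{\bggk}\le\epsilon$, we have $\vnorm{\bggk}>\epsilon$, hence $\vnorm{\bggk}^{(1-\beta)/\beta}>\epsilon^{(1-\beta)/\beta}$, and therefore
\begin{align*}
\vnorm{\bggkk}^{2} \;\le\; \left(1 - 2\rho\hat\tau\,\epsilon^{(1-\beta)/\beta}\right)\vnorm{\bggk}^{2}.
\end{align*}
Telescoping this uniform contraction gives $\vnorm{\bggk}^{2}\le(1-2\rho\hat\tau\,\epsilon^{(1-\beta)/\beta})^{k}\vnorm{\bgg_{0}}^{2}$, and using $-\log(1-x)\ge x$ for $x\in(0,1)$ converts the desired $\vnorm{\bggk}\le\epsilon$ into
\begin{align*}
k \;\ge\; \frac{\log(\vnorm{\bgg_{0}}^{2}/\epsilon^{2})}{2\rho\hat\tau\,\epsilon^{(1-\beta)/\beta}} \;\in\; \bigO{\epsilon^{(\beta-1)/\beta}\log(1/\epsilon)},
\end{align*}
which is the claimed sub-linear bound.

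The only subtlety I anticipate is verifying that the rate factor $(1-2\rho\hat\tau\,\vnorm{\bggk}^{(1-\beta)/\beta})$ remains in $(0,1)$ in both regimes. This is not really an obstacle: the argument in \cref{rem:meaningful_rate} applies verbatim to $\hat\tau$ (up to the same $((1-\theta)/((1+\theta)\nu))^{(1+\beta)/\beta}$ factor that already appears in the definition of $\hat\tau$) and, in case (ii), it suffices to assume $\epsilon$ is small enough that $2\rho\hat\tau\,\epsilon^{(1-\beta)/\beta}<1$, which is automatic as $\epsilon\to 0$ since $(1-\beta)/\beta>0$.
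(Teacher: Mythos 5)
Your proposal is correct and follows essentially the same route as the paper's own proof: both cases reduce the recursion of \cref{thm:inexact} to a uniform geometric contraction (via monotonicity of $\vnorm{\bggk}$ and the sign of the exponent $(1-\beta)/\beta$ in case (i), and via the stopping-time bound $\vnorm{\bggk}>\epsilon$ in case (ii)), then telescope and use $-\log(1-x)\in\mathcal{O}(x)$ to extract the iteration count. Your explicit remark on keeping the contraction factor in $(0,1)$ via \cref{rem:meaningful_rate} is a point the paper leaves implicit, but it is not a departure in method.
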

\begin{proof}
	Recall that we always have $ \vnorm{\bggk} \leq \vnorm{\bgg^{(k-1)}}  \leq \ldots \leq \vnorm{\bgg_{0}}.$ Dropping the dependence of $ \widehat{\tau}(\xxo) $ on $ \xxo $, we have the following.
	\begin{enumerate}[label = \textbf{(\roman{*})}]
		\item For $ \beta \geq 1 $, from \cref{eq:master_convergence_inexact}, we have $\vnorm{\bggk}^{2} \le \left( 1 - 2 \rho \widehat{\tau}\vnorm{\bgg_{0}}^{(1-\beta)/\beta} \right)^{k} \vnorm{\bgg_{0}}^2$, which, in order to obtain $ \vnorm{\bggk}^{2} \leq \epsilon $, implies that we must have
		\begin{align*}
			k \geq \log\left(\epsilon/\vnorm{\bgg_{0}}\right)/\log\left( 1 - 2 \rho \widehat{\tau}\vnorm{\bgg_{0}}^{(1-\beta)/\beta} \right).
		\end{align*}
		Now, noting that $ -\log(1-1/x) \in \mathcal{O}(1/x) $, we obtain the result.
		\item For $ \beta < 1 $, as long as $ \vnorm{\bggk}^{2} \geq \epsilon $, from \cref{eq:master_convergence_inexact}, we have 
		\begin{align*}
			\vnorm{\bggk}^{2} &\le \left( 1 - 2 \rho \widehat{\tau}\vnorm{\bggk}^{(1-\beta)/\beta} \right)^{k} \vnorm{\bgg_{0}}^2 \le \left( 1 - 2 \rho \widehat{\tau} \epsilon^{(1-\beta)/\beta} \right)^{k} \vnorm{\bgg_{0}}^2.
		\end{align*}
	\end{enumerate}	
	Now, we obtain the result as above. 
\end{proof}
\cref{corollary:iter_complexity} implies that (inexact) Newton-MR is guaranteed to converge for functions, which traditionally have not been considered suitable candidates for Newton-type methods. For example, \cref{alg:NewtonMR} can still be applied for optimization of a twice continuously differentiable objective for which we have $ \beta \ll 1 $ in \cref{assmpt:lipschitz_special}. Such functions can be extremely non-smooth, and as a consequence, they have been labeled, rather inaccurately, as cases where the application of curvature might be useless, e.g., \cite{arjevani2017oracle_finite,arjevani2017oracle_general}. 

\subsubsection{Convergence Under Generalized Polyak-\L{}ojasiewicz Inequality}
\label{sec:gpl}
The set of global optima of an invex function, $ f(\xx) $, is characterized by the zeros of the gradient, $ \bgg(\xx) $. If, in addition, the distance to the optimal set, in terms of iterates and/or their respective function values, is also somehow related to $ \bgg(\xx) $, then one can obtain rates at which the iterates and/or their objective values approach optimality. In this section, we consider an important sub-class of invex problems, which allows us to do that. 

\begin{definition}[Generalized Polyak-\L{}ojasiewicz Inequality]
	\label{def:gpl}
	Let $\mathcal{X} \subseteq \real^d$ be an open set. A differentiable function $f$ on $\mathcal{X}$ is said to satisfy the generalized Polyak-\L{}ojasiewicz (GPL) inequality on $ \mathcal{X} $ if there exist $ 1 < \eta < \infty $ and $ 0 < \mu < \infty $ such that
	\begin{align}
		\label{eq:gpl}
		f(\xx) - \inf_{\xx \in \mathcal{X}} f(\xx) &\leq \left( \frac{1}{\mu} \vnorm{\bgg(\xx)}^{\eta}\right)^{{1}/{(\eta-1)}}, \hspace{1cm} \forall \xx \in \mathcal{X}. 
	\end{align}
	The class of functions satisfying \cref{eq:error_bound_property} is denoted by $ \mathcal{F}^{GPL}_{\eta,\mu} $.
\end{definition}

It is clear that $ \mathcal{F}^{GPL}_{\eta,\mu} \subset \mathcal{F} $ (cf.\ \cref{def:invex}). Most often in the literature, Polyak-\L{}ojasiewicz (PL) inequality is referred to as \cref{eq:gpl} with $ \eta = 2$, e.g.,~\cite{karimi2016linear}, which excludes many functions. The generalized notion of Polyak-\L{}ojasiewicz in \cref{eq:gpl} with any $ 1< \eta <\infty $ encompasses many of such functions. For example, the weakly convex function $ f(x) = x^{4} $ clearly violates the typical PL (i.e., with $ \eta = 2 $), but it indeed satisfies \cref{eq:gpl} with $ \eta = 4 $ and $ \mu = 256 $. In fact, any polynomial function of the form $ f(x) = \sum_{i=1}^{p} a_{i} x^{2 i} $ with $ a_{i} \geq 0, i = 1,\ldots,p $ satisfies \cref{eq:gpl}. 

But ``how large is the class $ \mathcal{F}^{GPL}_{\eta,\mu} $ in \cref{def:gpl}''? We aim to shed light on this question by considering other classes of functions that are, in some sense, equivalent to $ \mathcal{F}^{GPL}_{\eta,\mu} $. In doing so, we draw similarities from various relaxations of strong convexity. More specifically, to alleviate the restrictions imposed by making strong convexity assumptions, several authors have introduced relaxations under which desirable convergence guarantees of various algorithms are maintained; e.g., the \emph{quadratic growth condition} \cite{anitescu2000degenerate}, the \emph{restricted secant inequality} \cite{tseng2009block}, and the \emph{error bounds} \cite{luo1993error}. The relationships among these classes of functions have also been established; see  \cite{zhang2017restricted,karimi2016linear,zhang2015restricted,schopfer2016linear,necoara2018linear}. We now give natural extensions of these conditions to invex functions and show that $ \mathcal{F}^{GPL}_{\eta,\mu} $ is an equivalent class of functions. 

Let us define the set of optimal points of the problem \cref{eq:obj} as
\begin{align}
	\label{eq:optimal_set}
	\mathcal{X}^{\star} \triangleq \left\{ \xx^{\star} \in \real^{d} \mid  f(\xx^{\star}) \leq f(\xx), ~\forall \xx \in \real^{d} \right\}.
\end{align}
Further, assume that $ \mathcal{X}^{\star} $ is non-empty, and denote the optimal value of \cref{eq:obj} by $ f^{\star} $. Recall that when $ f  $ is invex, $ \mathcal{X}^{\star} $ need not be a convex set, but it is clearly closed; see \cite[p.\ 14]{mishra2008invexity}.

\begin{definition}[Generalized Functional Growth Property]
	\label{def:fun_growth_property}
	A differentiable function $f$ is said to satisfy the generalized functional growth (GFG) property  if there exist $ 1 < \eta < \infty $, $ 0 < \mu < \infty $ and a vector-valued function $\bm{\phi}: \real^{d} \times \real^{d} \rightarrow \real^{d}$, such that
	\begin{align}
		f(\xx) - f^\star  \geq \mu \min_{\yy \in \mathcal{X}^{\star}} \vnorm{\bm{\phi}(\yy,\xx)}^{\eta}, \hspace{1cm} \forall \xx \in \real^{d}. \label{eq:fun_growth_property}
	\end{align}
	The class of functions satisfying \cref{eq:fun_growth_property} with the same $\bm{\phi}$ is denoted by $ \mathcal{F}^{GFG}_{\bm{\phi},\eta,\mu} $.
\end{definition}

\begin{definition}[Generalized Restricted Secant Inequality] 
	\label{def:secant_property}
	For a vector-valued function $\bm{\phi}: \real^{d} \times \real^{d} \rightarrow \real^{d}$ define $ \mathcal{Y}_{\phi}^{\star}(\xx) \triangleq \left\{ \yy^{\star} \in \mathcal{X}^{\star} \mid  \vnorm{\bm{\phi}(\yy^{\star},\xx)} \leq \vnorm{\bm{\phi}(\yy,\xx)}, ~\forall \yy \in \mathcal{X}^{\star} \right\}$.
	A differentiable function $f$ is said to satisfy the generalized restricted secant (GRS) inequality if there exist $ 1 < \eta < \infty $, $ 0 < \mu < \infty $ and a mapping $\bm{\phi}: \real^{d} \times \real^{d} \rightarrow \real^{d}$, such that
	\begin{align}
		\min_{\yy \in \mathcal{Y}_{\phi}^{\star}(\xx)} \dotprod{-\bgg(\xx), \bm{\phi}(\yy,\xx)} \geq \mu \min_{\yy \in \mathcal{X}^{\star}} \vnorm{\bm{\phi}(\yy,\xx)}^{\eta}, \hspace{1cm} \forall \xx \in \real^{d}. \label{eq:secant_property}
	\end{align}
	The class of functions satisfying \cref{eq:secant_property} with the same $ \bm{\phi} $ is denoted by $ \mathcal{F}^{GRS}_{\bm{\phi},\eta,\mu} $.
\end{definition}
It is easy to see that when $ f $ is convex and $ \bm{\phi}(\yy,\xx) = \yy - \xx $, we have $ \mathcal{Y}_{\phi}^{\star}(\xx) = \left\{[\xx]_{\mathcal{X}^{\star}}\right\}$, where $ [\xx]_{\mathcal{X}^{\star}} \triangleq \arg\min_{\yy \in \mathcal{X}^{\star}} \vnorm{\yy - \xx} $ is the unique orthogonal projection of $ \xx $ onto the set of optimal solutions $ \mathcal{X}^{\star} $ (which, in this case, is convex). Hence, the generalized condition \cref{eq:secant_property} coincides with, and hence is a generalization of, the usual definition of restricted secant inequality for convex functions with $ \eta = 2 $; see~\cite{zhang2017restricted,karimi2016linear,zhang2015restricted,schopfer2016linear}.

\begin{definition}[Generalized Error Bound Property]
	\label{def:error_bound_property}
	A differentiable function $f$ is said to satisfy the generalized error bound (GEB) property  if there exist $ 1 < \eta < \infty $, $ 0 < \mu < \infty $ and a vector-valued function $\bm{\phi}: \real^{d} \times \real^{d} \rightarrow \real^{d}$, such that
	\begin{align}
		\label{eq:error_bound_property}
		\vnorm{\bgg(\xx)} \geq \mu \min_{\yy \in \mathcal{X}^{\star}} \vnorm{\bm{\phi}(\yy,\xx)}^{\eta-1}, \hspace{1cm} \forall \xx \in \real^{d}. 
	\end{align}
	The class of functions satisfying \cref{eq:error_bound_property} with the same $ \bm{\phi} $ is denoted by $ \mathcal{F}^{GEB}_{\bm{\phi},\eta,\mu} $.
\end{definition}

\cref{lemma:gpl} establishes a loose notion of equivalence among the classes of functions mentioned above, as they relate to invexity. However, when restricted to a particular class of invex functions for a given $ \bm{\phi} $, \cref{lemma:gpl} shows that $ \mathcal{F}^{GPL}_{\eta,\mu} \cap \mathcal{F}_{\bm{\phi}}  $ can indeed be larger. In contrast, for when $ \bm{\phi}(\yy, \xx) = \yy - \xx $, equivalence between these classes has been established in \cite{karimi2016linear}. This is in particular so since, unlike \cite{karimi2016linear}, here we have made no assumptions on the smoothness of $ f $ such as Lipschitz-continuity of its gradient. This is a rather crucial distinction as for our main results in this paper, we make smoothness assumptions that are less strict that those typically found in the literature; see \cref{assmpt:lipschitz_special} in \cref{sec:assumption}. 

\begin{lemma}
	\label{lemma:gpl}
	\hfill
	\begin{enumerate}[label = \textbf{(\roman*)}]
		\item For any $ \bm{\phi} $, we have
		\begin{align*}
			\mathcal{F}^{GFG}_{\bm{\phi},\eta,\mu} \cap \mathcal{F}_{\bm{\phi}} \subseteq \mathcal{F}^{GRS}_{\bm{\phi},\eta,\mu} \cap \mathcal{F}_{\bm{\phi}} \subseteq \mathcal{F}^{GEB}_{\bm{\phi},\eta,\mu}\cap \mathcal{F}_{\bm{\phi}} \subseteq 
			\mathcal{F}^{GPL}_{\eta,\mu} \cap \mathcal{F}_{\bm{\phi}} .
		\end{align*}
		\item There exists a $ \widehat{\bm{\phi}} $, for which
		\begin{align*}		
			\mathcal{F}^{GPL}_{\eta,\mu} \equiv \mathcal{F}^{GPL}_{\eta,\mu} \cap \mathcal{F}_{\widehat{\bm{\phi}}} \subseteq
			\mathcal{F}^{GFG}_{\widehat{\bm{\phi}},\eta,\mu} \cap \mathcal{F}_{\widehat{\bm{\phi}}}.
		\end{align*}
		\item These classes are equivalent in the sense that 
		\begin{align*}		
			\bigcup_{\bm{\phi}} \left\{\mathcal{F}^{GFG}_{\bm{\phi},\eta,\mu} \cap \mathcal{F}_{\bm{\phi}}\right\} \equiv \bigcup_{\bm{\phi}} \left\{\mathcal{F}^{GRS}_{\bm{\phi},\eta,\mu} \cap \mathcal{F}_{\bm{\phi}} \right\} \equiv \bigcup_{\bm{\phi}} \left\{\mathcal{F}^{GEB}_{\bm{\phi},\eta,\mu}\cap \mathcal{F}_{\bm{\phi}} \right\} \equiv
			\mathcal{F}^{GPL}_{\eta,\mu}.
		\end{align*}
	\end{enumerate}
\end{lemma}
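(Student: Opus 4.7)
The argument has three blocks, one per item of the lemma. For part \textbf{(i)} I would establish the three inclusions in order by chasing the definitions. For GFG$\,\cap\mathcal{F}_{\bm{\phi}}\subseteq\,$GRS$\,\cap\mathcal{F}_{\bm{\phi}}$, I would take any $\yy\in\mathcal{Y}_{\phi}^{\star}(\xx)\subseteq\mathcal{X}^{\star}$ and apply invexity \cref{eq:invex} at the pair $(\yy,\xx)$ to get $\dotprod{-\nabla f(\xx),\bm{\phi}(\yy,\xx)}\ge f(\xx)-f(\yy)=f(\xx)-f^{\star}$; then the GFG bound $f(\xx)-f^{\star}\ge\mu\min_{\yy\in\mathcal{X}^{\star}}\vnorm{\bm{\phi}(\yy,\xx)}^{\eta}$ chains through the minimum on the left, yielding \cref{eq:secant_property}. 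For GRS$\,\cap\mathcal{F}_{\bm{\phi}}\subseteq\,$GEB$\,\cap\mathcal{F}_{\bm{\phi}}$, I would observe that every $\yy\in\mathcal{Y}_{\phi}^{\star}(\xx)$ has $\vnorm{\bm{\phi}(\yy,\xx)}=\min_{\zz\in\mathcal{X}^{\star}}\vnorm{\bm{\phi}(\zz,\xx)}$, so Cauchy–Schwarz gives $\min_{\yy\in\mathcal{Y}_{\phi}^{\star}(\xx)}\dotprod{-\nabla f(\xx),\bm{\phi}(\yy,\xx)}\le\vnorm{\nabla f(\xx)}\min_{\zz\in\mathcal{X}^{\star}}\vnorm{\bm{\phi}(\zz,\xx)}$; combining with \cref{eq:secant_property} and cancelling one factor of the minimum (trivial when it vanishes, since $\eta>1$) delivers \cref{eq:error_bound_property}. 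For GEB$\,\cap\mathcal{F}_{\bm{\phi}}\subseteq\,$GPL$\,\cap\mathcal{F}_{\bm{\phi}}$, invexity plus Cauchy–Schwarz yields $f(\xx)-f^{\star}\le\vnorm{\nabla f(\xx)}\min_{\yy\in\mathcal{X}^{\star}}\vnorm{\bm{\phi}(\yy,\xx)}$; substituting the GEB lower bound on $\vnorm{\nabla f(\xx)}$ and consolidating exponents gives exactly \cref{eq:gpl}.

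For part \textbf{(ii)} the key is to build, for each $f\in\mathcal{F}^{GPL}_{\eta,\mu}$, an explicit map $\widehat{\bm{\phi}}$ tailored to $f$. I would set
\[
\widehat{\bm{\phi}}(\yy,\xx)\defeq\begin{cases}\dfrac{f(\yy)-f(\xx)}{\vnorm{\nabla f(\xx)}^{2}}\,\nabla f(\xx),& \nabla f(\xx)\neq\bm 0,\\ \bm 0,& \nabla f(\xx)=\bm 0.\end{cases}
\]
This choice forces \cref{eq:invex} to hold with equality whenever $\nabla f(\xx)\ne\bm 0$. At a stationary point the GPL bound \cref{eq:gpl} forces $f(\xx)=f^{\star}$, so $f(\yy)-f(\xx)\ge 0$ for all $\yy$, and \cref{eq:invex} holds trivially; hence $f\in\mathcal{F}_{\widehat{\bm{\phi}}}$, which proves the equivalence $\mathcal{F}^{GPL}_{\eta,\mu}\equiv\mathcal{F}^{GPL}_{\eta,\mu}\cap\mathcal{F}_{\widehat{\bm{\phi}}}$. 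For the GFG inclusion, at any $\xx$ with $\nabla f(\xx)\ne\bm 0$, $\min_{\yy\in\mathcal{X}^{\star}}\vnorm{\widehat{\bm{\phi}}(\yy,\xx)}=(f(\xx)-f^{\star})/\vnorm{\nabla f(\xx)}$, so checking \cref{eq:fun_growth_property} reduces to $\vnorm{\nabla f(\xx)}^{\eta}\ge\mu(f(\xx)-f^{\star})^{\eta-1}$, which is just GPL \cref{eq:gpl} rearranged; the stationary-point case is trivial since both sides vanish.

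For part \textbf{(iii)} I would simply assemble the previous two items. Part (i) gives the chain $\bigcup_{\bm{\phi}}\{\mathcal{F}^{GFG}_{\bm{\phi},\eta,\mu}\cap\mathcal{F}_{\bm{\phi}}\}\subseteq\bigcup_{\bm{\phi}}\{\mathcal{F}^{GRS}_{\bm{\phi},\eta,\mu}\cap\mathcal{F}_{\bm{\phi}}\}\subseteq\bigcup_{\bm{\phi}}\{\mathcal{F}^{GEB}_{\bm{\phi},\eta,\mu}\cap\mathcal{F}_{\bm{\phi}}\}\subseteq\mathcal{F}^{GPL}_{\eta,\mu}$ (the last step drops the $\mathcal{F}_{\bm{\phi}}$ factor, which is allowed because GPL carries no $\bm{\phi}$). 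Part (ii) closes the loop by showing $\mathcal{F}^{GPL}_{\eta,\mu}\subseteq\bigcup_{\bm{\phi}}\{\mathcal{F}^{GFG}_{\bm{\phi},\eta,\mu}\cap\mathcal{F}_{\bm{\phi}}\}$ via the explicit $\widehat{\bm{\phi}}$.

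The expected obstacle is part (ii): all three inclusions of (i) are routine once one commits to unpacking the definitions, but (ii) requires inventing the ``right'' $\widehat{\bm{\phi}}$ and, more subtly, handling the singular case $\nabla f(\xx)=\bm 0$. The trick there is to exploit the fact that GPL already certifies that stationary points are global optima, which makes \cref{eq:invex} and \cref{eq:fun_growth_property} degenerate in the desired direction at those points; without GPL, the construction would break. I would also note that the construction makes $\widehat{\bm{\phi}}$ depend on $f$, which is why (ii) is phrased with an existential quantifier on $\widehat{\bm{\phi}}$ and why (iii) must take a union over $\bm{\phi}$ to recover an unconditional equivalence.
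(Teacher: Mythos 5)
Your proposal is correct and follows essentially the same route as the paper's proof: the same definition-chasing chain (invexity plus Cauchy--Schwarz) for the three inclusions in part (i), the identical construction of $\widehat{\bm{\phi}}$ in part (ii), and assembly of the two for part (iii). Your explicit verification that $f\in\mathcal{F}_{\widehat{\bm{\phi}}}$ at stationary points (via GPL forcing $f(\xx)=f^{\star}$ there) is a small point the paper leaves implicit, but it does not change the argument.
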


\begin{proof}
	\begin{enumerate}[label = \textbf{(\roman*)}]
		\item We start by showing that, for any $ f \in \mathcal{F}_{\bm{\phi}} $, \cref{eq:fun_growth_property} implies \cref{eq:secant_property}. Consider any $ \xx \in \real^{d} $ and $ \yy^{\star} \in \mathcal{Y}_{\phi}^{\star}(\xx) $. By \cref{eq:fun_growth_property} and  invexity as well as noticing that $ \mathcal{Y}_{\phi}^{\star}(\xx) \subseteq \mathcal{X}^{\star} \Rightarrow f(\yy^{\star}) = f^{\star} $, we have $\mu \min_{\yy \in \mathcal{X}^{\star}} \vnorm{\bm{\phi}(\yy,\xx)}^{\eta} \leq f(\xx) - f(\yy^{\star})  \leq -\dotprod{\bm{\phi}(\yy^{\star}, \xx),\bgg(\xx)}$.
		Since the last inequality holds for all $ \yy^\star \in \mathcal{Y}_{\phi}^{\star}(\xx) $, we can minimize the right-hand side over all $ \mathcal{Y}_{\phi}^{\star}(\xx) $ to get
		\begin{align*}
			\mu \min_{\yy \in \mathcal{X}^{\star}} \vnorm{\bm{\phi}(\yy,\xx)}^{\eta} \leq \min_{\yy \in \mathcal{Y}_{\phi}^{\star}(\xx)} \dotprod{-\bm{\phi}(\yy,\xx),\bgg(\xx)},
		\end{align*}
		which is exactly \cref{eq:secant_property}. A simple application of Cauchy-Schwarz inequality on \cref{eq:secant_property} and noting that $ \min_{\yy \in \mathcal{Y}_{\phi}^{\star}(\xx)} \vnorm{\bm{\phi}(\yy,\xx)} = \min_{\yy \in \mathcal{X}^{\star}} \vnorm{\bm{\phi}(\yy,\xx)} $, it follows that $\mu \min_{\yy \in \mathcal{X}^{\star}} \vnorm{\bm{\phi}(\yy,\xx)}^{\eta} \leq \min_{\yy \in \mathcal{X}^{\star}} \vnorm{\bm{\phi}(\yy,\xx)} \vnorm{\bgg(\xx)}$. If $ \xx $ is such that $ \min_{\yy \in \mathcal{X}^{\star}} \vnorm{\bm{\phi}(\yy, \xx)} = 0 $, then the inequality \cref{eq:error_bound_property} trivially holds, otherwise dividing both sides by $ \min_{\yy \in \mathcal{X}^{\star}} \vnorm{\bm{\phi}(\yy,\xx)} $ gives \cref{eq:error_bound_property}.
		
		To get \cref{eq:gpl} from 	\cref{eq:error_bound_property}, note that by invexity, for any $ \yy \in \mathcal{X}^{\star}$, we have $ f(\xx) - f^\star \leq \vnorm{\bm{\phi}(\yy,\xx)} \vnorm{\bgg(\xx)}$, hence $f(\xx) - f^\star \leq \min_{\yy \in \mathcal{X}^{\star}} \vnorm{\bm{\phi}(\yy,\xx)} \vnorm{\bgg(\xx)}$, which using \cref{eq:error_bound_property} gives \cref{eq:gpl}. 
		
		\item We will show that \cref{eq:gpl} implies that there exists a $ \widehat{\bm{\phi}} $ for which \cref{eq:fun_growth_property} holds. Indeed, for any invex function $ f $, we can always define a corresponding $ \widehat{\bm{\phi}} $ as
		\begin{align*}
			\widehat{\bm{\phi}}(\yy,\xx) = \left\{
			\begin{array}{ll}
				0, \quad &\bgg(\xx) = \bm{0}\\ \\
				\left({\left(f(\yy) - f(\xx)\right)}/{\vnorm{\bgg(\xx)}^{2}}\right) \bgg(\xx), \quad &\bgg(\xx) \neq \bm{0}
			\end{array}
			\right..
		\end{align*}
		Now suppose $ f \in \mathcal{F}^{GPL}_{\eta,\mu} $. For any $ \xx \in \mathcal{X}^{\star} $, the inequality \cref{eq:fun_growth_property} trivially holds. Suppose $ \xx \notin \mathcal{X}^{\star} $, which implies $ \bgg(\xx) \neq \bm{0} $. For any $ \yy \in \mathcal{X}^{\star} $, we have
		\begin{align*}
			\vnorm{\widehat{\bm{\phi}}(\yy,\xx)}^{\eta} = \frac{|f(\yy) - f(\xx)|^{\eta}}{\vnorm{\bgg(\xx)}^{\eta}} = \frac{\left(f(\xx) - f^{\star}\right)^{\eta}}{\vnorm{\bgg(\xx)}^{\eta}} \leq \frac{1}{\mu} \left(f(\xx) - f^{\star}\right), 
		\end{align*}
		where the last inequality follows since by \cref{eq:gpl}, we have $\left(f(\xx) - f^{\star}\right)^{(1-\eta)} \geq {\mu}/{\vnorm{\bgg(\xx)}^{\eta}}, \; \forall \xx \in \mathcal{X}$.
		\item The is a simple implication of the first two parts.  
	\end{enumerate}
\end{proof}

If $ f \in \mathcal{F}^{GPL}_{\eta,\mu} $, where $ \mathcal{F}^{GPL}_{\eta,\mu} $ is the class of Generalized Polyak-\L{}ojasiewicz (GPL) functions as defined in \cref{def:gpl}, we immediately have the following convergence guarantee of \cref{alg:NewtonMR} in terms of objective value $ f(\xxk) $. Similar results for \cref{alg:NewtonMR_Ex} can also easily be  obtained.

\begin{theorem}[Convergence of \cref{alg:NewtonMR} under GPL inequality \cref{eq:gpl}]
	\label{thm:global_GPL}
	Under the same assumptions as in \cref{thm:inexact}, if $f(\xx)$ satisfies GPL inequality \cref{eq:gpl}, there are constants $ 0 < C < \infty $, $ 0< \zeta < 1 $ and $ \omega > 0 $, such that for the iterates of \cref{alg:NewtonMR} we have the following.
	\begin{enumerate}[label = \textbf{(\roman*)}]
		\item With $ \beta \geq 1 $ in \cref{assmpt:lipschitz_special}, we get R-linear convergence as 
		\begin{align*}
			f(\xxk) - \inf_{\xx \in \real^{d}} f(\xx) \leq C \zeta^{k},
		\end{align*}
		where $ C = C(\xx_{0},\eta,\mu) $, and $ 0 < \zeta = \zeta(\xx_{0},\beta,\eta,\rho,\widehat{\tau}(\xxo)) < 1 $.
		\item With $ 0 < \beta < 1 $ in \cref{assmpt:lipschitz_special}, we get R-sublinear convergence as  
		\begin{align*}
			f(\xxk) - \inf_{\xx \in \real^{d}} f(\xx) \leq C \left( \frac{1}{k+1} \right)^{\omega},
		\end{align*}
		where $ C = C(\xx_{0},\eta,\mu,\rho,\beta,\widehat{\tau}(\xxo)) $, and $ \omega = \omega(\eta,\beta) > 0 $.
	\end{enumerate}
	Here, $ \rho $ is the line-search parameter of \cref{alg:NewtonMR}, $ \eta $ and $ \mu $ are as in \cref{def:gpl}, and $ \widehat{\tau}(\xxo) $ is as in \cref{thm:inexact}.
\end{theorem}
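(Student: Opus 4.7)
The plan is to convert the gradient-norm recursion of Theorem~\ref{thm:inexact} into a bound on the objective-value gap via the GPL inequality \cref{eq:gpl}. Write $a_k \triangleq \vnorm{\bggk}^2$. By Theorem~\ref{thm:inexact},
\begin{align*}
a_{k+1} \le \bigl( 1 - 2\rho \hat\tau\, a_k^{(1-\beta)/(2\beta)} \bigr) a_k.
\end{align*}
Because \cref{alg:NewtonMR} monotonically decreases the gradient norm (cf.\ \cref{eq:grad_descent}), the sequence $\{a_k\}$ is non-increasing and hence $a_k \le a_0 = \vnorm{\bgg_0}^2$ for all $k$. From \cref{eq:gpl}, whenever we control $\vnorm{\bggk}$ we obtain
\begin{align*}
f(\xxk) - f^{\star} \le \mu^{-1/(\eta-1)} \vnorm{\bggk}^{\eta/(\eta-1)} = \mu^{-1/(\eta-1)}\, a_k^{\eta/(2(\eta-1))}.
\end{align*}
So the task reduces to deriving a rate for $a_k$ itself in each regime and then composing with the above monotone transformation.

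For case~(i) with $\beta \geq 1$, the exponent $(1-\beta)/(2\beta) \leq 0$ is non-positive, so $a_k^{(1-\beta)/(2\beta)} \ge a_0^{(1-\beta)/(2\beta)}$. Substituting into the recursion gives $a_{k+1} \le \zeta_0\, a_k$ with the problem-dependent but iteration-independent constant $\zeta_0 \triangleq 1 - 2\rho \hat\tau\, \vnorm{\bgg_0}^{(1-\beta)/\beta} \in (0,1)$ (positivity of $\zeta_0$ follows from \cref{rem:meaningful_rate}). Iterating yields $a_k \le \zeta_0^k\, a_0$, and then composing with the GPL bound above gives
\begin{align*}
f(\xxk) - f^{\star} \le \mu^{-1/(\eta-1)} \vnorm{\bgg_0}^{\eta/(\eta-1)} \zeta_0^{k\eta/(2(\eta-1))} \eqqcolon C\, \zeta^k,
\end{align*}
with $\zeta = \zeta_0^{\eta/(2(\eta-1))} \in (0,1)$, which establishes R-linear convergence.

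For case~(ii) with $0 < \beta < 1$, the exponent is strictly positive, $(1-\beta)/(2\beta) > 0$, so the recursion takes the superlinear-residual form
\begin{align*}
a_{k+1} \le a_k - 2\rho \hat\tau\, a_k^{(1+\beta)/(2\beta)},
\end{align*}
with $p \triangleq (1+\beta)/(2\beta) > 1$. This is the main technical obstacle: unlike case~(i) the ``per-step factor'' degrades as $a_k \to 0$. The standard route is to invoke the classical lemma that any non-negative sequence satisfying $a_{k+1} \leq a_k - c\, a_k^p$ with $p>1$ and $c>0$ obeys $a_k \le C'(k+1)^{-1/(p-1)}$; the elementary proof divides the inequality by $a_k a_{k+1}$, uses monotonicity to get $a_{k+1}^{1-p} - a_k^{1-p} \ge c(p-1)$, and telescopes. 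Applied here with $1/(p-1) = 2\beta/(1-\beta)$, this yields $a_k \in \mathcal{O}\bigl((k+1)^{-2\beta/(1-\beta)}\bigr)$. Composing again with the GPL bound then produces
\begin{align*}
f(\xxk) - f^{\star} \le C \left(\frac{1}{k+1}\right)^{\omega}, \qquad \omega = \frac{\beta\,\eta}{(1-\beta)(\eta-1)} > 0,
\end{align*}
as claimed, where $C$ absorbs $\mu$, $\rho$, $\hat\tau$, $\beta$, $\eta$ and the initial gap. The dependence of $C$ and $\zeta$ on $(\xx_0, \beta, \eta, \rho, \hat\tau, \mu)$ matches the statement of the theorem.
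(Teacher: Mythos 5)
Your proof is correct, and part (i) is essentially identical to the paper's argument (bound $\vnorm{\bggk}^{(1-\beta)/\beta}\ge\vnorm{\bgg_0}^{(1-\beta)/\beta}$ using monotonicity, iterate the geometric contraction, compose with \cref{eq:gpl}). For part (ii), however, you take a genuinely different route. The paper telescopes the decrements $\vnorm{\bgg_{j}}^{(1+\beta)/\beta}$, bounds their sum by $\vnorm{\bgg_0}^2/(2\rho\hat\tau)$, and lower-bounds the sum by $(k+1)$ copies of the smallest term; this yields $\vnorm{\bggk}\in\bigO{(k+1)^{-\beta/(1+\beta)}}$ and hence $\omega=\beta\eta/\bigl((1+\beta)(\eta-1)\bigr)$. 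You instead invoke the classical lemma for recursions $a_{k+1}\le a_k-c\,a_k^{p}$ with $p=(1+\beta)/(2\beta)>1$, which yields the strictly sharper decay $\vnorm{\bggk}\in\bigO{(k+1)^{-\beta/(1-\beta)}}$ and the larger exponent $\omega=\beta\eta/\bigl((1-\beta)(\eta-1)\bigr)$. Both arguments establish the theorem as stated (it only asserts existence of some $\omega(\eta,\beta)>0$), but yours buys a better sublinear rate at the cost of a slightly heavier lemma, while the paper's telescoping is more elementary. Two small points to tighten: your parenthetical proof sketch of the lemma (``divide by $a_k a_{k+1}$'') is the $p=2$ special case and does not directly give $a_{k+1}^{1-p}-a_k^{1-p}\ge c(p-1)$ for general $p$; for that you should use convexity of $x\mapsto x^{1-p}$ (or the mean value theorem) together with $a_{k+1}\le a_k$. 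Also, you should note that the recursion is consistent with $a_{k+1}\ge 0$ (i.e., $c\,a_k^{p-1}<1$), which is exactly what \cref{eq:meaningful_rate} guarantees.
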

\begin{proof}
	\begin{enumerate}[label = \textbf{(\roman*)}]
		\item 
		For notational simplicity, we drop the dependence of $ \widehat{\tau}(\xxo) $ on $ \xxo $. Consider $ \beta \geq 1 $. From \cref{eq:gpl,eq:master_convergence_inexact}, we have
		\begin{align*}
			f(\xxk) - \inf_{\xx \in \real^{d}} f(\xx) &\leq \frac{1}{\mu^{1/(\eta-1)}}  \left(\vnorm{\bggk}^{2}\right)^{{\eta}/{\left(2(\eta-1)\right)}} \\
			&\leq \frac{1}{\mu^{1/(\eta-1)}}  \left(\left( 1 - 2 \rho \widehat{\tau}\vnorm{\bgg_{0}}^{(1-\beta)/\beta} \right)^{k} \vnorm{\bgg_{0}}^2 \right)^{{\eta}/{\left(2(\eta-1)\right)}} \\
			&= \underbrace{\frac{\vnorm{\bgg_{0}}^{{\eta}/{(\eta-1)}} }{\mu^{1/(\eta-1)}}}_{C} \left(\underbrace{\left( 1 - 2 \rho \widehat{\tau}\vnorm{\bgg_{0}}^{(1-\beta)/\beta} \right)^{{\eta}/{\left(2(\eta-1)\right)}}}_{\zeta}\right)^{k},
		\end{align*}
		where $ \widehat{\tau} $ is as in \cref{thm:inexact}, and by \cref{eq:meaningful_rate} we also have that $ 0 < \zeta < 1 $.
		\item Now take $ 0 < \beta < 1 $. From \cref{eq:master_convergence_inexact}, we have
		\begin{align*}
			\vnorm{\bggkk}^2 &\leq \left( 1 - 2 \rho \widehat{\tau}\vnorm{\bggk}^{(1-\beta)/\beta} \right) \vnorm{\bggk}^2 = \vnorm{\bggk}^2 - 2 \rho \widehat{\tau} \vnorm{\bggk}^{(1+\beta)/\beta},
		\end{align*}
		which, implies $\vnorm{\bggkk}^2 \leq \vnorm{\bgg_{0}}^2 - 2 \rho \widehat{\tau}  \sum_{j=0}^{k} \vnorm{\bgg^{j}}^{(1+\beta)/\beta}$.	Rearranging the last inequality gives $2 \rho \widehat{\tau}    \sum_{j=0}^{k} \vnorm{\bgg^{j}}^{(1+\beta)/\beta} \leq \vnorm{\bgg_{0}}^2 - \vnorm{\bggkk}^2 \leq \vnorm{\bgg_{0}}^2$. From \cref{eq:grad_descent}, we get $(k+1) 2 \rho \widehat{\tau}    \vnorm{\bgg^{k}}^{(1+\beta)/\beta} \leq \vnorm{\bgg_{0}}^2$, which gives
		\begin{align*}
			\vnorm{\bgg^{k}} \leq \left((2 \rho \widehat{\tau})^{-1} \vnorm{\bgg_{0}}^2 \right)^{\beta/(1+\beta)}  \left(\frac{1}{k+1}\right)^{\beta/(1+\beta)}.
		\end{align*}
		Using \cref{eq:gpl}, we get 
		\begin{align*}
			f(\xxk) - \inf_{\xx \in \real^{d}} f(\xx) &\leq \frac{1}{\mu^{1/(\eta-1)}}  \vnorm{\bggk}^{{\eta}/{\left((\eta-1)\right)}} \\
			&\leq \frac{1}{\mu^{1/(\eta-1)}}  \left( \left((2 \rho \widehat{\tau})^{-1} \vnorm{\bgg_{0}}^2 \right)^{\beta/(1+\beta)}  \left(\frac{1}{k+1}\right)^{\beta/(1+\beta)} \right)^{{\eta}/{\left((\eta-1)\right)}} \\
			&= \underbrace{\frac{1}{\mu^{1/(\eta-1)}}  \left((2 \rho \widehat{\tau})^{-1} \vnorm{\bgg_{0}}^2 \right)^{\frac{\beta \eta}{(\eta-1)(1+\beta)}} }_{C} \left(\frac{1}{k+1}\right)^{\overbrace{\frac{\beta \eta}{(\eta-1)(1+\beta)}}^{\omega}}. 
		\end{align*}
		
	\end{enumerate}
\end{proof}

The following Lemma shows that, under some assumptions, the norm of the gradient at each point can be estimated using the distance of the point to the optimality set $ \mathcal{X}^{\star} $.
\begin{lemma}
	\label{lemma:lip_grad_star}
	Under \cref{assmpt:lipschitz_special,assmpt:pseudo_regularity,assmpt:null_space}, for any $ \xxo \in \real^{d} $, there is exists a constant $ 0 \leq L_{\bgg}(\xxo) < \infty $, such that 
	\begin{align}
		\label{eq:lip_grad_star}
		\vnorm{\bgg(\xx)} \leq L_{\bgg}(\xxo) \min_{\xx^{\star} \in \mathcal{X}^{\star}} \vnorm{\xx - \xx^{\star}}, \quad \forall \xx \in \sX_{0}, 
	\end{align}
	where $ \mathcal{X}^{\star} $ is as in \cref{eq:optimal_set}.
\end{lemma}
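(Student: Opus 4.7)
The plan is to sandwich $\vnorm{\nabla^{2} f(\yy)\nabla f(\yy)}$ between an upper bound coming from \cref{assmpt:lipschitz_special} anchored at an optimum and a lower bound coming from \cref{assmpt:pseudo_regularity} together with \cref{assmpt:null_space}, and then to divide through to isolate $\vnorm{\nabla f(\yy)}$.

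First I would fix once and for all some $\xx_{0}^{\star} \in \mathcal{X}^{\star}$, which exists since $\mathcal{X}^{\star}$ is assumed non-empty. Because $\vnorm{\nabla f(\xx_{0}^{\star})} = 0$, the sub-level set $\mathcal{X}_{0}$ appearing in \cref{assmpt:lipschitz_special} collapses to $\mathcal{X}^{\star}$ itself, so every $\xx^{\star} \in \mathcal{X}^{\star}$ is admissible as the left argument in \cref{eq:lip_special} and satisfies $\nabla f(\xx^{\star}) = \bm{0}$. For any $\yy \in \reals^{d}$ and any $\xx^{\star} \in \mathcal{X}^{\star}$, inequality \cref{eq:lip_special} therefore collapses to $\vnorm{\nabla^{2} f(\yy)\nabla f(\yy)} \le L(\xx_{0}^{\star})\,\vnorm{\yy - \xx^{\star}}^{\beta}$, and minimizing over $\xx^{\star} \in \mathcal{X}^{\star}$ yields the one-sided upper bound
\[
\vnorm{\nabla^{2} f(\yy)\nabla f(\yy)} \;\le\; L(\xx_{0}^{\star})\,\Bigl(\min_{\xx^{\star} \in \mathcal{X}^{\star}}\vnorm{\yy - \xx^{\star}}\Bigr)^{\beta}.
\]

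Next I would establish the matching lower bound $\vnorm{\nabla^{2} f(\yy)\nabla f(\yy)} \ge \gamma\sqrt{\nu}\,\vnorm{\nabla f(\yy)}$. Decomposing $\nabla f(\yy) = \UU_{\yy}\UU_{\yy}^{T}\nabla f(\yy) + \UU_{\yy}^{\perp}(\UU_{\yy}^{\perp})^{T}\nabla f(\yy)$ and using that $\UU_{\yy}^{\perp}$ spans $\textnormal{Null}(\nabla^{2} f(\yy))$ by symmetry of the Hessian, the second piece is annihilated, so $\nabla^{2} f(\yy)\nabla f(\yy) = \nabla^{2} f(\yy)\UU_{\yy}\UU_{\yy}^{T}\nabla f(\yy)$. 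Since $\UU_{\yy}\UU_{\yy}^{T}\nabla f(\yy) \in \textnormal{Range}(\nabla^{2} f(\yy))$, the range-space lower bound \cref{eq:pseudo_regularity_lower} of \cref{lemma:pseudo_regularity} gives $\vnorm{\nabla^{2} f(\yy)\UU_{\yy}\UU_{\yy}^{T}\nabla f(\yy)} \ge \gamma\,\vnorm{\UU_{\yy}\UU_{\yy}^{T}\nabla f(\yy)} = \gamma\,\vnorm{\UU_{\yy}^{T}\nabla f(\yy)}$, and \cref{eq:null_space_grad_1} of \cref{lemma:null_space_grad} then gives $\vnorm{\UU_{\yy}^{T}\nabla f(\yy)} \ge \sqrt{\nu}\,\vnorm{\nabla f(\yy)}$.

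Chaining the two inequalities produces $\gamma\sqrt{\nu}\,\vnorm{\nabla f(\yy)} \le L(\xx_{0}^{\star})\,(\min_{\xx^{\star} \in \mathcal{X}^{\star}}\vnorm{\yy - \xx^{\star}})^{\beta}$, so in the case $\beta = 1$ of \cref{assmpt:lipschitz_special} (which is exactly the regime produced by the classical smoothness condition \cref{eq:lip_usual} via \cref{lemma:lipschitz_special}, and is the setting under which \cref{eq:lip_grad_star} is invoked) the conclusion holds with $L_{\bgg} \triangleq L(\xx_{0}^{\star})/(\gamma\sqrt{\nu})$. The main subtlety is recognizing that anchoring moral-smoothness at an optimum kills the term $\nabla^{2} f(\xx^{\star})\nabla f(\xx^{\star})$ and thereby converts the two-sided Hölder bound of \cref{eq:lip_special} into a one-sided control of $\vnorm{\nabla^{2} f(\yy)\nabla f(\yy)}$ by the distance to $\mathcal{X}^{\star}$; once this reduction is in place, the remaining work is purely algebraic combination of \cref{assmpt:pseudo_regularity} and \cref{assmpt:null_space}.
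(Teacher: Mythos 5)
Your proof is correct and follows essentially the same route as the paper: anchor the moral-smoothness bound at a point of $\mathcal{X}^{\star}$ (where $\nabla^{2}f(\xx^{\star})\nabla f(\xx^{\star})=\bm{0}$) to control $\vnorm{\nabla^{2}f(\xx)\nabla f(\xx)}$ by the distance to the optimal set, then invert via Assumptions \ref{assmpt:pseudo_regularity} and \ref{assmpt:null_space}. The differences are cosmetic — you fix $\xx_{0}$ at an optimum and obtain the constant $L(\xx_{0}^{\star})/(\gamma\sqrt{\nu})$ where the paper instead minimizes $L(\xx_{0})$ over all $\xx_{0}$ and uses the factor $1/(\nu\gamma)$, and you are more explicit than the paper in flagging that the stated linear (rather than $\beta$-H\"{o}lder) bound requires $\beta=1$ in \cref{assmpt:lipschitz_special}.
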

\begin{proof}
	By \cref{assmpt:lipschitz_special}, since $ \mathcal{X}^{\star} \subseteq \mathcal{X}_{0} $, we have $\vnorm{\HH(\xx) \bgg(\xx)} = \vnorm{\HH(\xx) \bgg(\xx) - \HH(\xx^{\star}) \bgg(\xx^{\star})} \leq L(\xx_{0}) \vnorm{\xx - \xx^{\star}}$, $\; \forall (\xxs,\xx) \in \sX^{\star} \times \real^{d} $. Since the left-hand side of the above inequality is independent of $ \xx_{0}$, we get $\vnorm{\HH(\xx) \bgg(\xx)} \leq L \vnorm{\xx - \xx^{\star}}, \; \forall (\xxs,\xx) \in \sX^{\star} \times \real^{d}$, where $ L \triangleq \inf_{\xx_{0} \in \real^{d}} L(\xx_{0})$.
	Now, from \cref{assmpt:pseudo_regularity,assmpt:null_space}, it follows that for any $(\xx,\xx^{\star}) \in \sX_{0} \times \mathcal{X}^{\star} $, we have
	\begin{align*}
		\vnorm{\bgg(\xx)} \leq \frac{1}{\nu(\xxo)} \vnorm{\left[\HH(\xx)\right]^{\dagger} \HH(\xx) \bgg(\xx)} \leq \frac{1}{\nu(\xxo) \gamma(\xxo)} \vnorm{\HH(\xx) \bgg(\xx)} \leq \frac{L}{\nu(\xxo) \gamma(\xxo)} \vnorm{\xx - \xx^{\star}}. 
	\end{align*}
	The result follows by taking the minimum of the right-hand side over $ \sX^{\star} $.
\end{proof}

Using \cref{lemma:lip_grad_star}, from \cref{thm:inexact_local}-\ref{cond:thm:inexact_local_02}, with any $ \xx^{\star} \in \mathcal{X}^{\star} $, we get
\begin{align*}
	\vnorm{\bggkk} \le \frac{L_{\HH} \left[L_{\bgg}(\xxo)\right]^{1+\beta}}{(1+\beta) [\gamma(\xxo)]^{1+\beta}} \vnorm{\xxk - \xx^{\star}}^{1+\beta} + \sqrt{\frac{1+\theta}{2}} L_{\bgg}(\xxo) \vnorm{\xxk - \xx^{\star}},
\end{align*}
where $\gamma(\xxo),\nu(\xxo), L_{\HH}, \theta$, and $L_{\bgg}(\xxo)$ are the constants defined, respectively in \cref{eq:pseudo_regularity}, \cref{eq:null_space}, \cref{eq:lip_special_03}, \cref{eq:update_inexact} and \cref{eq:lip_grad_star}.
Now, suppose we have \cref{eq:error_bound_property} with $ \bm{\phi}(\yy,\xx) = \yy - \xx $ (perhaps only locally). It follows that
\begin{align*}
	\min_{\xx^{\star} \in \mathcal{X}^{\star}} \vnorm{\xxkk - \xx^{\star}}^{\eta-1} \le \frac{L_{\HH} \left[L_{\bgg}(\xxo)\right]^{1+\beta}}{(1+\beta) [\gamma(\xxo)]^{1+\beta} \mu} \vnorm{\xxk - \xx^{\star}}^{1+\beta} + \frac{\sqrt{(1+\theta)/2} L_{\bgg}(\xxo)}{\mu} \vnorm{\xxk - \xx^{\star}}.
\end{align*}

We can also obtain a similar inequality using \cref{thm:inexact_local}-\ref{cond:thm:inexact_local_01}. We can gather the above in the following corollary. 
\begin{theorem}
	\label{thm:exact_inexact_local_pl}
	Consider \cref{assmpt:diff,assmpt:pseudo_regularity,assmpt:null_space}. Suppose that we have \cref{eq:error_bound_property} with $ \bm{\phi}(\yy,\xx) = \yy - \xx $. For the iterates of \cref{alg:NewtonMR} using update directions from \cref{eq:update_inexact_range} and with $ \alphak = 1 $, we have 
	\begin{align*}
		\min_{\xx^{\star} \in \mathcal{X}^{\star}} \vnorm{\xxkk - \xx^{\star}}^{c_{0}(\eta-1)} \le c_{1} \min_{\xx^{\star} \in \mathcal{X}^{\star}} \vnorm{\xxk - \xx^{\star}}^{1+\beta} + c_{2} \min_{\xx^{\star} \in \mathcal{X}^{\star}} \vnorm{\xxk - \xx^{\star}}^{c_{0}},
	\end{align*}
	where
	\begin{enumerate}[label = (\roman*)]
		\item \label{cond:exact_inexact_local_p1} if \cref{assmpt:lipschitz_special} holds, then
		\begin{align*}
			c_{0} = 2, ~~ c_{1} = \frac{2 L(\xx_{0}) \left[L_{\bgg}(\xxo)\right]^{1+\beta}}{(1+\beta) [\gamma(\xxo)]^{1+\beta} \mu^{2}}, ~~ c_{2} = \frac{\theta L^{2}_{\bgg}(\xxo)}{\mu^{2}},
		\end{align*}
		\item \label{cond:exact_inexact_local_p2} and if \cref{assmpt:lip_special_03} holds and $ \HH(\xx) $ is continuous, then
		\begin{align*}
			c_{0} = 1, ~~ c_{1} = \frac{L_{\HH} \left[L_{\bgg}(\xxo)\right]^{1+\beta}}{(1+\beta) [\gamma(\xxo)]^{1+\beta} \mu}, ~~ c_{2} = \frac{L_{\bgg}(\xxo)\sqrt{(1+\theta)/2}}{\mu}.
		\end{align*}
	\end{enumerate}
	Here, $ L(\xx_{0}), \gamma(\xxo), L_{\HH}$ and $\theta$ are defined, respectively, in \cref{assmpt:lipschitz_special,assmpt:pseudo_regularity,assmpt:lip_special_03,cond:inexact_tolerance}, $\mathcal{X}^{\star} $ is as in in \cref{eq:optimal_set}, $ L_{\bgg}(\xxo) $ is as in \cref{lemma:lip_grad_star}, $ \mu $ and $ \eta $ are as in \cref{def:error_bound_property}, and $\beta$ refers to the respective constants of \cref{assmpt:lipschitz_special,assmpt:lip_special_03}.
\end{theorem}

\begin{remark}
	\label{rem:local_optimal_set}
	Assuming \cref{eq:error_bound_property} with $ \bm{\phi}(\yy,\xx) = \yy - \xx$ (locally), is weaker than requiring to have isolated (local) minimum. For example, suppose we have $ \nu(\xxo) = 1 $ and $ \eta = 2 $. By setting $ \theta_{k} \in \bigO{\min_{\xx^{\star} \in \mathcal{X}^{\star}} \vnorm{\xxk - \xx^{\star}}^{2 \beta}} -1 $, from \cref{thm:exact_inexact_local_pl}-\ref{cond:exact_inexact_local_p2}, we get super-linear convergence to the \emph{set of optimal solutions}, $ \mathcal{X}^{\star} $. In fact, the rate is quadratic for $ \beta = 1 $ as a special case, which matches those from many prior works, e.g., \cite{zhou2006convergence,dembo1982inexact,li2004regularized}. 
	Clearly, convergence in terms of $ \min_{\xx^{\star} \in \mathcal{X}^{\star}} \vnorm{\xxk - \xx^{\star}} $ relaxes the notion of isolated minimum, which is at times used for the local convergence analysis of Newton-CG. Hence, error bound conditions similar to that used in \cref{thm:exact_inexact_local_pl} has been extensively used in similar literature to establish convergence to non-isolated (local) minima \cite{fan2005quadratic,yamashita2001rate,fan2012modified,li2009truncated,zhou2006convergence,zhou2005superlinear,li2004regularized}.
\end{remark}

\section{Numerical Experiments}
\label{sec:experiments}
In this section, we evaluate the empirical performance of inexact Newton-MR (\cref{alg:NewtonMR}) as compared with several other Newton-type optimization methods on some machine learning problems. 
Empirical comparisons to a variety of first-order methods are left for future work.
\paragraph{Optimization Methods.}
In the following performance evaluations, we compare Newton-MR with some widely used optimization methods as listed below; see \cite{nocedal2006numerical} for the details of each of these algorithm.
\begin{itemize}[label = -]
	\item \textit{Newton-CG} with Armijo line-search.
	\item \textit{L-BFGS} using strong Wolfe conditions. 
	\item \textit{Nonlinear-CG} using strong Wolfe conditions. In all of experiments, among the many variants of nonlinear-CG, FR-PR variant \cite[Chapter 5]{nocedal2006numerical} performed better on almost all instances.
	\item \textit{Gauss-Newton} with Armijo line-search.
	\item \textit{Trust-region} with CG-Steihaug sub-problem solver as described in \cite[Algorithms 6.1.1]{conn2000trust} and \cite[Algorithm 7.2]{nocedal2006numerical}.
\end{itemize}

The main hyper-parameters for these methods, used in our simulations, can be found in \cref{table:para}. In all of our experiments, we run each method until the norm of the gradient falls below  1E-10, maximum number of iterations are reached, or the algorithm fails to make progress. The latter case is detected when Newton-CG encounters a negative curvature and its CG inner iterations are terminated, or when, for any method, the maximum number of corresponding line-search has been reached and no step-size has been accepted. These scenarios are depicted by a cross ``$\bm{\times}$'' on all the plots. 

\begin{remark}
	Although Newton-CG is not meant to be used on problems where Hessian can become singular or indefinite, we run plain Newton-CG on our examples without any modifications, e.g., we do not attempt to employ the encountered negative curvature as in \cite{royer2018newton,yao2021inexact}. As a result, we terminate its iterations if CG fails. This is a judicious choice to highlight a significant difference between Newton-MR and Newton-CG. Specifically, this serves to distinguish between cases where the trajectory of Newton-CG remains in regions that are locally strongly-convex and those where it enters areas with high degree of weak-convexity or non-convexity. 
	For example, in \cref{sec:gmm}, more often than not, Newton-CG fails at the very outset with $ \xx_{0} $, whereas Newton-MR makes continuous progress. 
\end{remark}

\begin{table}[htbp]
	\begin{center}
		\scalebox{0.9}{
			\begin{tabular}{|c|c|c|c|c|c|}
				\hline
				\multicolumn{1}{|m{2.5cm}|}{\centering CG/MINRES-QLP/Steihaug inexactness tolerance} & \multicolumn{1}{m{1.5cm}|}{\centering History size of L-BFGS} & \multicolumn{1}{m{2cm}|}{\centering Armijo line-search parameter} & \multicolumn{1}{m{3.5cm}|}{\centering Wolfe curvature condition parameter} &
				\multicolumn{1}{m{2cm}|}{\centering Maximum line-search iterations} & \multicolumn{1}{m{2.5cm}|}{\centering Trust-region parameters}\\ [2ex]
				\hline &&&&& \\ [-2ex]
				0.01 & 20 & $10^{-4}$ & \multicolumn{1}{m{3.5cm}|}{\centering 0.9 (L-BFGS) \\ 0.1 (Nonlinear-CG)} & 1,000 & \cite[(6.1.6)]{conn2000trust}\\ [1ex]
				\hline
		\end{tabular}}
	\end{center}
	\caption{\small Hyper-parameters used in optimization methods. The initial trial step-size for all methods, with the exception of nonlinear-CG is set to $ \alpha = 1 $. For nonlinear-CG, we use the strategy described in \cite[Section 3.5]{nocedal2006numerical}. The initial trust-region size is also set to one. \label{table:para}}
\end{table}

\paragraph{Performance Evaluation Measure.}
In all of our experiments, we plot the objective value vs.\ the total
number of oracle calls of function, gradient and Hessian-vector product. This is so since measuring ``wall-clock'' time can be greatly affected by individual implementation details. In contrast, counting the number of oracle calls, as an implementation and system independent unit of complexity, is most appropriate and fair.
More specifically, after computing each function value, computing the corresponding gradient is equivalent to one additional function evaluation. Our implementations are Hessian-free, i.e., we merely require Hessian-vector products instead of using the explicit Hessian. For this, each Hessian-vector product amounts to two additional function evaluations, as compared with gradient evaluation. The number of such oracle calls per iteration for all the algorithms is given in \cref{table:oracle_calls}. 

\begin{table}[htbp]
	\begin{center}
		\scalebox{0.85}{
			\begin{tabular}{|c|c|c|c|c|c|}
				\hline &&&&& \\ [-2ex]
				Newton-MR & L-BFGS & Newton-CG & Nonlinear-CG & Gauss-Newton & TR CG-Steihaug\\ [1ex]
				\hline &&&&& \\ [-2ex]
				$2 + 2 \times N_s + 2  \times N_l$ & $2 + 2 \times N_l$ & $2 + 2 \times N_s + N_l$ & $ 2+ 2 \times N_l$ & $2 + 2  \times  N_s + N_l$ & $2 + \left( 2  \times  N_s + 1 \right) \times N_l$ \\ [1ex]
				\hline
		\end{tabular}}
	\end{center}
	\caption{\small Complexity measure per iteration for each of the algorithms. $N_s$ and $N_l$ denote, respectively, the total number of iterations for the corresponding inner solver and that resulting from performing the line search or the trust-region radius update. \label{table:oracle_calls}}
\end{table}

\subsection{Softmax Regression}
\label{sec:softmax}

Here, we consider the softmax cross-entropy minimization problem without regularization, which is used in machine learning for multi-class classification applications. More specifically, we have
\begin{align}
	\label{eq:softmax}
	f(\xx) \triangleq \mathcal{L}(\xx_{1},\xx_{2},\ldots,\xx_{C-1})  = \sum_{i=1}^{n} \left(\log \left(1+\sum_{c' = 1 }^{C-1} e^{\dotprod{\aa_{i}, \xx_{c'}}}\right)  - \sum_{c = 1}^{C-1}\mathbf{1}(b_{i} = c) \dotprod{\aa_{i},\xx_{c}}\right),
\end{align}
where $\{\aa_i, b_i\}_{i=1}^{n}$ with $\aa_i \in \real^{p}$, $b_i \in \{0, 1, \dots, C\}$ denote the 
training data, $ C $ is the total number of classes for each input data $ \aa_{i} $ and $\xx = (\xx_{1},\xx_{2},\dots,\xx_{C-1})$. Note that, in this case, we have $ d = (C-1) \times p $. It can be shown that, depending on the data, \cref{eq:softmax} is either strictly convex or merely weakly convex.
In either case, however, by a similar analysis as that in \cref{example:erm}, one can show that $ \nu(\xx) = 1 $. Hence, it follows that $\bgg(\xxk) \in \text{Range}\left(\HH(\xxk)\right)$ and CG iterations within Newton-CG are well-defined. 
The datasets use for our experiments in this section are listed in \cref{table:softmax_data} and the performance of each method is depicted in \cref{fig:softmax_obj}.

\begin{table}[!htbp]
	\centering
	\begin{tabular}{|c|c|c|c|c|c|} 
		\hline
		Name & $ n $ & $ p $ & C & $ d = (C-1) \times p $ & Convexity of \cref{eq:softmax} \\ 
		\hline \hline &&&&& \\ [-2ex]
		\texttt{20 Newsgroups} & 10,142 & 53,975 & 20 & 1,025,525 & Weak \\ [1ex] 
		\hline &&&&& \\ [-2ex]
		\texttt{cifar10} & 50,000  &3,072 & 10 & 27,648 & Strict\\ [1ex] 
		\hline &&&&& \\ [-2ex]
		\texttt{covetype} & 435,759 & 54 & 7 & 324 & Strict\\ [1ex] 
		\hline &&&&& \\ [-2ex]
		\texttt{gisette} & 6,000 & 5,000 & 2 & 5,000 & Strict\\ [1ex] 
		\hline &&&&& \\ [-2ex]
		\texttt{mnist} & 60,000  & 784 & 10 & 7,065 & Strict\\ [1ex] 
		\hline &&&&& \\ [-2ex]
		\texttt{UJIIndoorLoc} & 19,937 & 520 & 5 & 2,080 & Strict\\ [1ex] 
		\hline
	\end{tabular}
	\caption{\small Data sets used for example of \cref{sec:softmax}. All data sets are publicly available from \cite{libsvm,Dua:2017}.\label{table:softmax_data}}
\end{table}

\begin{figure}[!h]\centering
	\begin{minipage}[htbp]{0.32\linewidth}
		\subfigure[\texttt{20 Newsgroups}]
		{\includegraphics[scale=0.36]{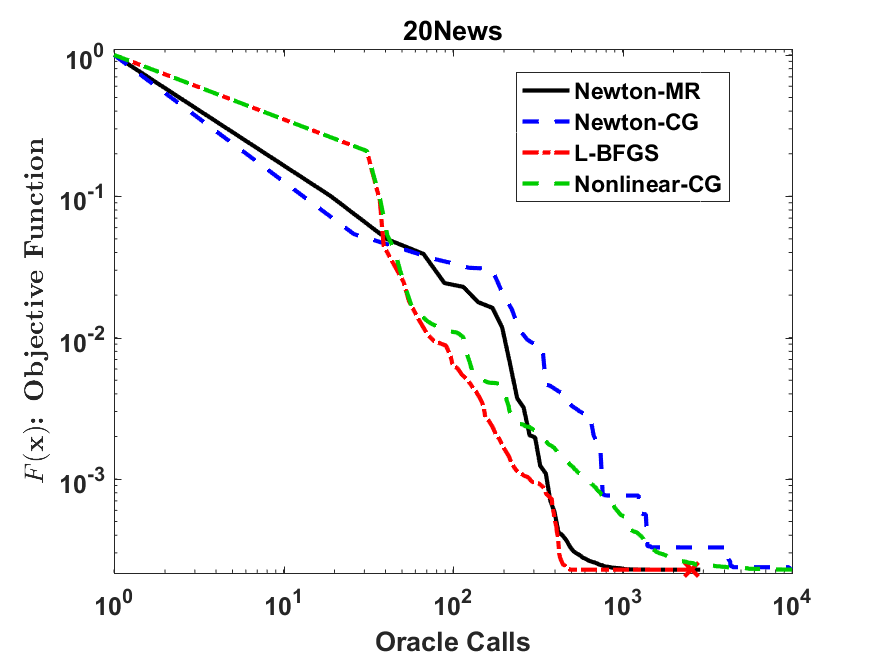}}
	\end{minipage}
	\begin{minipage}[htbp]{0.32\linewidth}
		\subfigure[\texttt{cifar10}]
		{\includegraphics[scale=0.36]{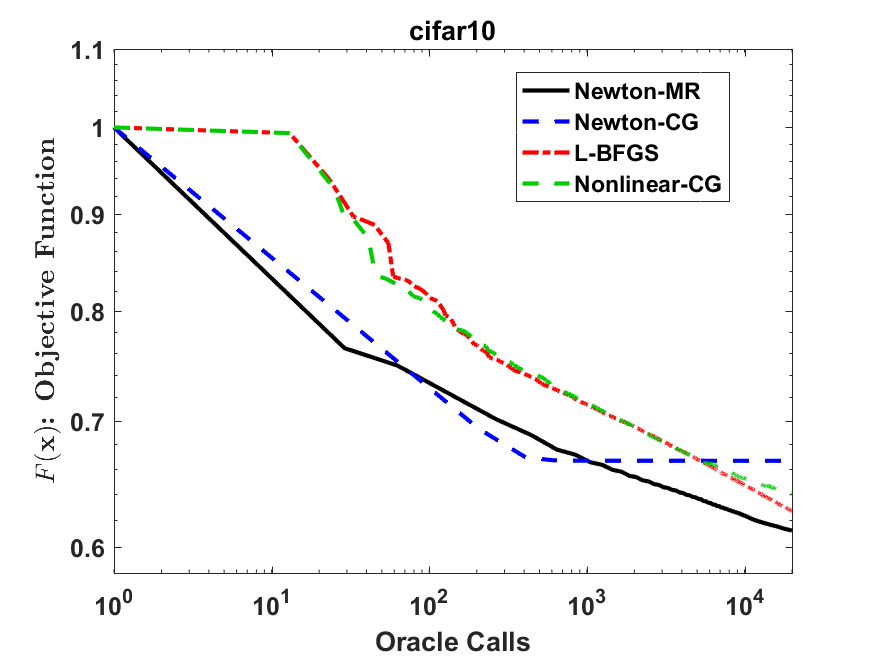}}
	\end{minipage}
	\begin{minipage}[htbp]{0.32\linewidth}
		\subfigure[\texttt{covetype}]
		{\includegraphics[scale=0.36]{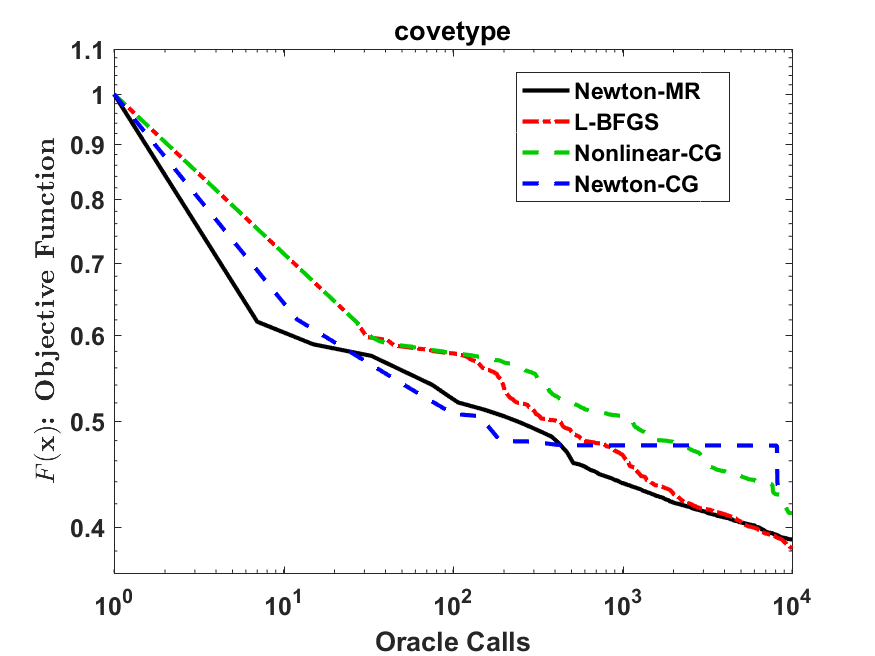}}
	\end{minipage}
	\begin{minipage}[htbp]{0.32\linewidth}
		\subfigure[\texttt{gisette}]
		{\includegraphics[scale=0.36]{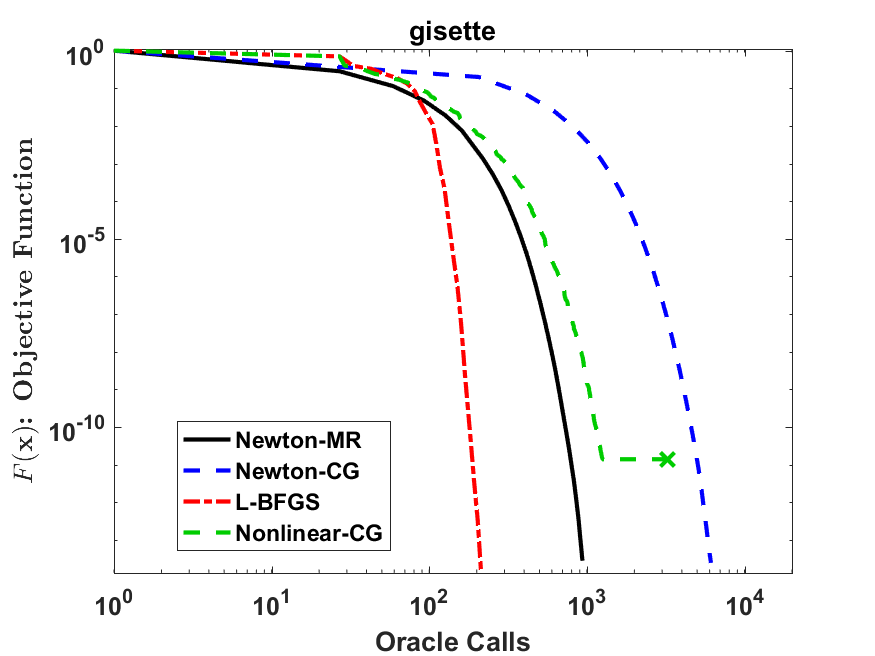}}
	\end{minipage}
	\begin{minipage}[htbp]{0.32\linewidth}
		\subfigure[\texttt{mnist}]
		{\includegraphics[scale=0.36]{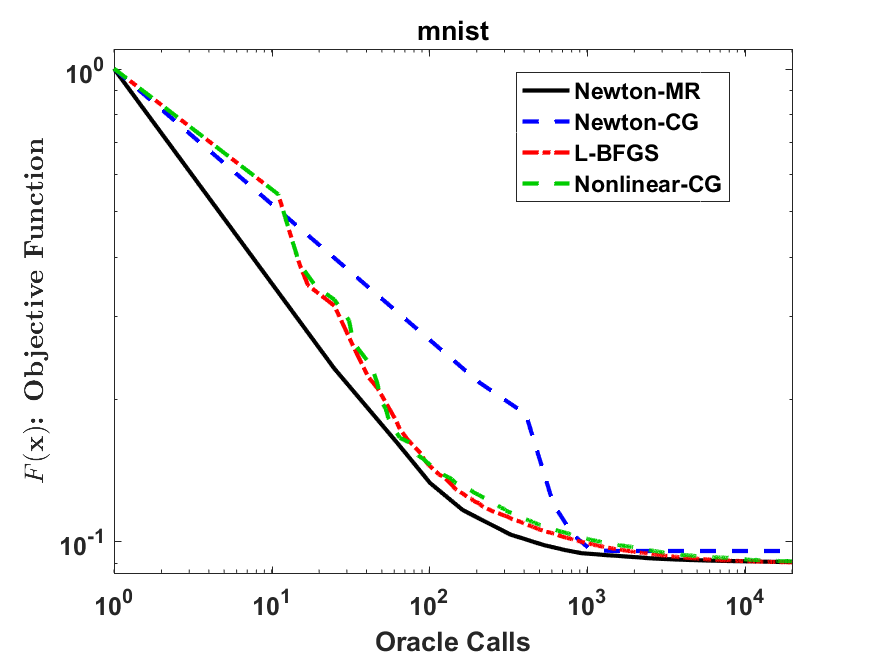}}
	\end{minipage}
	\begin{minipage}[htbp]{0.32\linewidth}
		\subfigure[\texttt{UJIIndoorLoc}]
		{\includegraphics[scale=0.36]{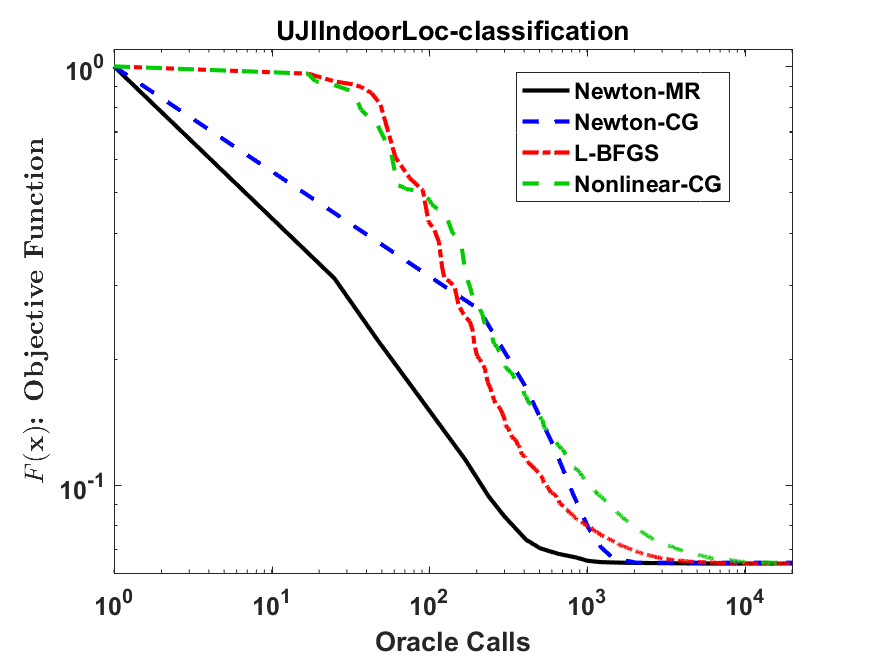}}
	\end{minipage}
	\caption{\small Objective value, $ f(\xx) $ vs.\ number of oracle calls as in \cref{table:oracle_calls} for datasets of \cref{table:softmax_data} on the model problem \cref{eq:softmax}. As for this convex problem, Gauss-Newton is mathematically equivalent to Newton-CG, it is naturally excluded from comparisons. All algorithms are always initialized at $ \xx_{0} = \bm{0} $. As it can be clearly seen, Newton-MR almost always performs very competitively, and in particular, it greatly outperforms Newton-CG. \label{fig:softmax_obj}}
\end{figure}

\subsection{Gaussian Mixture Model}
\label{sec:gmm}
Here, we consider an example involving a mixture of Gaussian densities where the goal is to recover some mixture parameters such as the mean vectors and the mixture weights. Although, this problem is generally not invex, it has been shown in \cite{mei2016landscape} that it indeed exhibits features that are close to being invex, e.g., small regions of saddle points and large regions containing global minimum. Nonetheless, the non-convexity of this problem results in a Hessian matrix that can become indefinite and/or singular across iterations. 

For simplicity, we consider a mixture model with two Gaussian components as 
\begin{align}
	\label{eq:gmm}     
	f(\xx) \triangleq \mathcal{L}(x_0, \xx_1, \xx_2) =  -\sum_{i=1}^n \log\Big(\omega(x_0) \Phi \left(\aa_i; \xx_{1}, \bm{\Sigma}_{1}\right) + (1-\omega(x_0))\Phi \left(\aa_i; \xx_{2},\bm{\Sigma}_{2}\right)\Big), 
\end{align}
where $\Phi$ denotes the density of the p-dimensional standard normal distribution, $\aa_i \in \real^{p}$ are the data points, $\xx_{1} \in \real^{d}, \xx_{2} \in \real^{p}, \bm{\Sigma}_{1} \in \real^{p \times p}$, and $\bm{\Sigma}_{2} \in \real^{p \times p}$ are the corresponding mean vectors and the covariance matrices of two the Gaussian distributions, $x_0 \in \real$, and $\omega(t) = 1/(1+e^{-t}) $ to ensure that the mixing weight lies within $ [0,1] $. Note that, here, $\xx \triangleq [x_{0}, \xx_{1}^{\intercal}, \xx_{2}^{\intercal}]^{\intercal} \in \real^{2p+1}$ and $ d = 2p + 1 $. 

We run the experiments 500 times, and plot the performance profile \cite{dolan2002benchmarking,gould2016note} of each method; see \cref{fig:gmm_profile}. For each run, we generate $1,000$ random data points, generated from the mixture distribution \cref{eq:gmm} with $ p = 100 $, and ground truth parameters as $ x_{0}^{\star} \sim \bm{\mathcal{U}}_{1}[0,1]$, $\xx^{\star}_{1} \sim \bm{\mathcal{U}}_{p}[-1,1],$ and $\xx^{\star}_{2} \sim \bm{\mathcal{U}}_{p}[3,4] $, where $ \bm{\mathcal{U}}_{p}[a,b] $ denotes the distribution of a $p$-dimensional random vector whose independent components are drawn uniformly over the interval $[a,b]$. 
Covariance matrices are constructed randomly, at each run, with controlled condition number, such that they are not axis-aligned. For this, we first randomly generate two $p \times p$ matrices, $\WW_1, \WW_2$, whose elements are iid drawn standard normal distribution. We then find corresponding orthogonal basis, $ \QQ_{1}, \QQ_{2} $, using QR factorizations. This is then followed by forming $ \bm{\Sigma}_{i}^{-1} = \QQ_{i}^{\intercal} \DD \QQ_{i} $ where $ \DD $ is a diagonal matrix whose diagonal entries are chosen equidistantly from the interval $ [0,100] $, i.e., the condition number of each $ \bm{\Sigma}_{i} $ is $ 100 $.

The performance profiles of all the methods are gathered in \cref{fig:gmm_profile}. As it can be seen, Newton-MR greatly outperforms all alternative second-order methods in this example. The trust-region method also shows a reasonable performance. The L-BFGS and Nonlinear-CG methods both have comparable performances. It is worthwhile to highlight that Newton-CG fails to converge in many of the runs. This is  because, for this non-convex problem, the Hessian matrix can become indefinite and, as a result, the CG iterations can fail before the underlying inexactness condition is met. Furthermore, the Gauss-Newton method performed extremely poorly, which can be an indication that the underlying Gauss-Newton matrix is perhaps a low quality approximation to the true Hessian in such a model.

\begin{figure}[!h]\centering
	\subfigure[$ f(\xxk) $]
	{\includegraphics[scale=0.55]{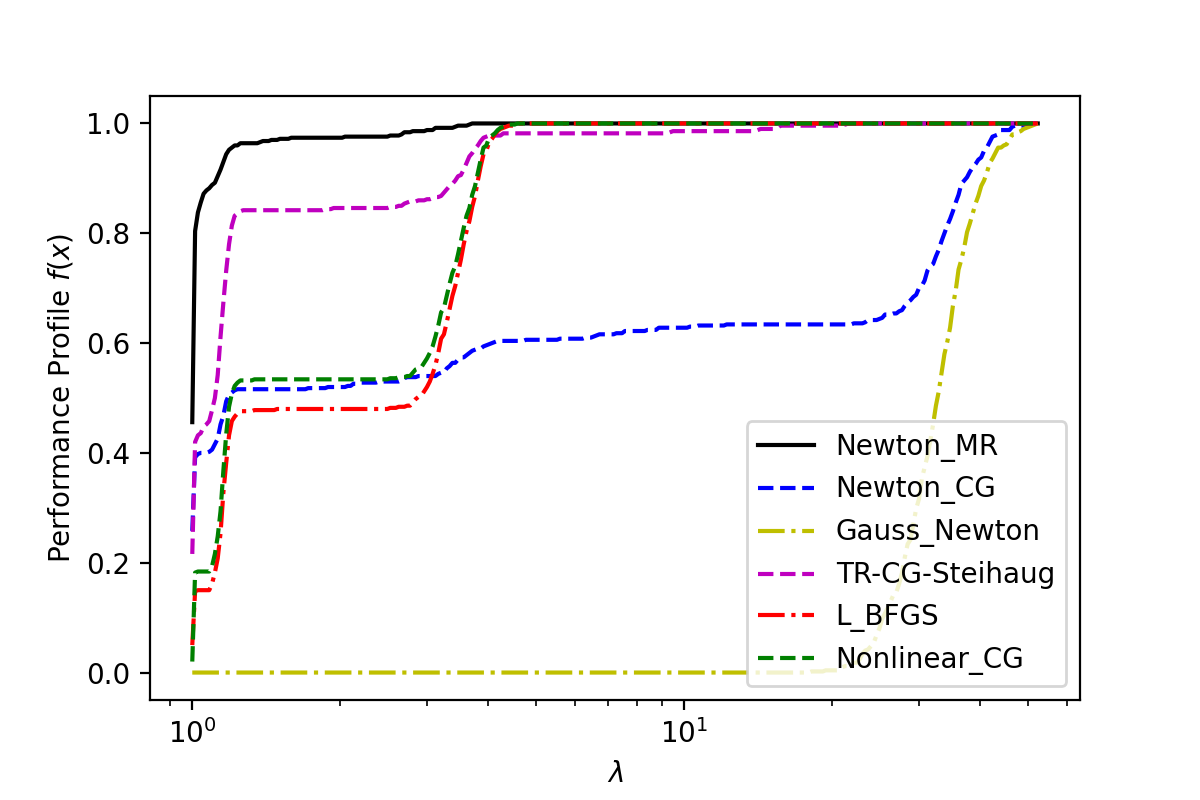}}
	\caption{\small Performance profile for 500 runs of various methods for solving \cref{eq:gmm} as detailed in \cref{sec:gmm}. For a given $ \lambda $ in the x-axis, the corresponding value on the y-axis is the proportion of times that a given solver's performance lies within a factor $\lambda$ of the best possible performance over all runs. All algorithms are always initialized at random using a multivariate normal distribution with mean zero and identity covariance. As it can be seen, Newton-MR, greatly outperforms alternative second-order methods in this example. \label{fig:gmm_profile}}
\end{figure}

\subsection{Nonlinear Least-squares}
\label{sec:nlls}
The examples of \cref{sec:softmax,sec:gmm} show that, when the problem is (approximately) invex, the inexact Newton-MR method, depicted in \cref{alg:NewtonMR}, can be a highly effective optimization algorithm.
We now study a potential drawback of using the plain Newton-MR method for the optimization of the objectives that are highly non-invex. We set out to show that for such functions, \cref{alg:NewtonMR,alg:NewtonMR_Ex}, by virtue of the monotonic reduction of the gradient norm, run the risk of converging to undesirable saddle points or local maxima. For this, following \cite{xuNonconvexEmpirical2017}, we consider the simple, yet illustrative, non-linear least squares problems arising from the task of binary classification with squared loss.\footnote{Logistic loss, the ``standard'' loss used in this task, leads to a convex objective. We use squared loss to obtain a nonconvex objective.} 
More specifically, given the data $\{\aa_i,b_i\}_{i=1}^n$ with $\aa_i\in\bbR^d, b_i\in\{0,1\}$, we consider the regularized objective
\begin{align}
	\label{eq:NLS}	
	f(\xx) = \sum_{i=1}^{n}  \left(b_i - \phi(\dotprod{\aa_i, \xx}) \right)^2 + \psi(\xx),
\end{align}
where $ \phi(z) = 1/(1+e^{-z}) $ is the sigmoid function and $ \psi(\xx) $ is some regularization term. To make the objective highly non-invex, we add a non-convex regularization in the form of $  \psi(\xx) = \sum_{i=1}^{d} x_i^2 / (1+x_i^2) $, where $ x_i $ is the $ i\th $ component of the vector $ \xx $. 

We compare the performance of the inexact variant of Newton-MR, \cref{alg:NewtonMR}, the Gauss-Newton method, as well as the trust-region algorithm with CG-Steihaug solver. The Gauss-Newton method has traditionally been a method of choice for non-linear least-squares problems (of course without a non-convex regularization). The trust-region algorithm can take advantage of the negative curvature directions that arise as part of the Steihaug-CG iterations, and it is an effective method for most-non-convex problems. The approximate Hessian for the Gauss-Newton method is taken as $ \JJ_{\rr}^{\transpose}(\xx) \cdot  \JJ_{\rr}(\xx) + \nabla^{2} \psi(\xx) $, where $ \JJ_{\rr}(\xx) \in \real^{n \times d} $ is the Jacobian of the sigmoid mapping within the squared loss. Since this matrix is not necessarily positive semi-definite (due to non-convexity of $ \psi(\xx) $), the CG inner iterations might fail for this problem in which case we terminate the Gauss-Newton method. We initialize the algorithms using $ \xxo = \zero $, $ \xxo = \one $, and one drawn from the multivariate normal distribution with mean zero and identity covariance. We evaluate the performance of the methods on three datasets from \cref{table:softmax_data}, namely \texttt{20 Newsgroups}, \texttt{mnist} and \texttt{UJIIndoorLoc}. For our binary classification setting, we have relabeled the odd and even classes, respectively, as $ 0 $ and $1 $. The results are depicted in \cref{fig:nnls}.

\begin{figure}[!h]\centering
	\begin{minipage}[htbp]{0.32\linewidth}
		\subfigure[\texttt{20 Newsgroups} ($ \xxo = \zero $)]
		{\includegraphics[scale=0.36]{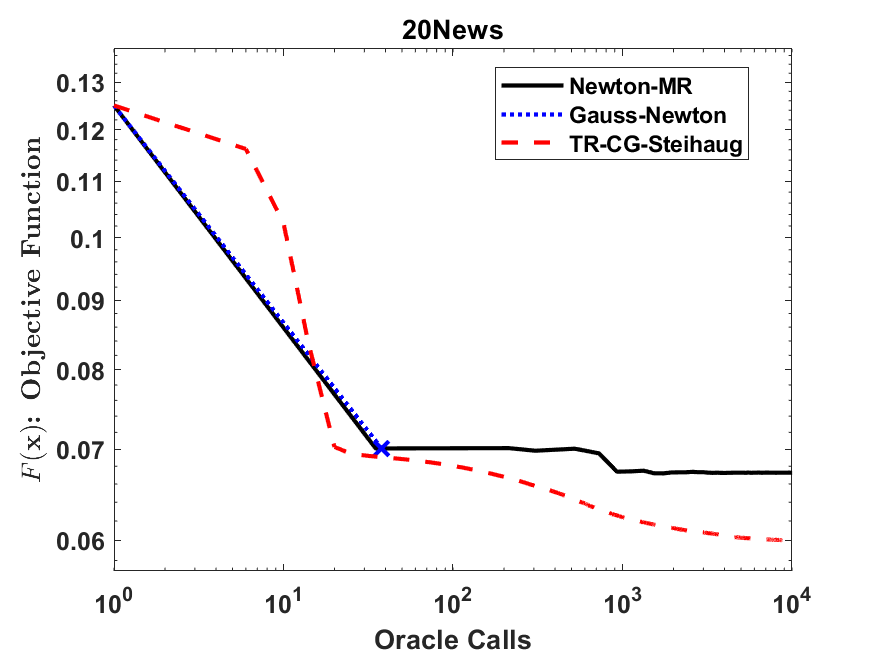}}
	\end{minipage}
	\begin{minipage}[htbp]{0.32\linewidth}
		\subfigure[\texttt{20 Newsgroups} ($ \xxo = \one $)]
		{\includegraphics[scale=0.36]{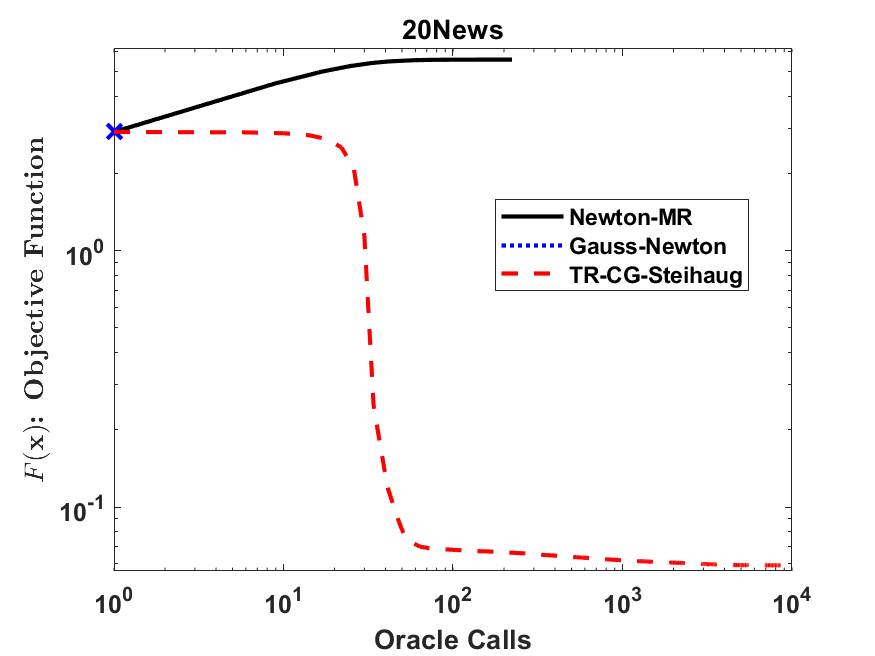}}
	\end{minipage}
	\begin{minipage}[htbp]{0.32\linewidth}
		\subfigure[\texttt{20 Newsgroups} (random)]
		{\includegraphics[scale=0.36]{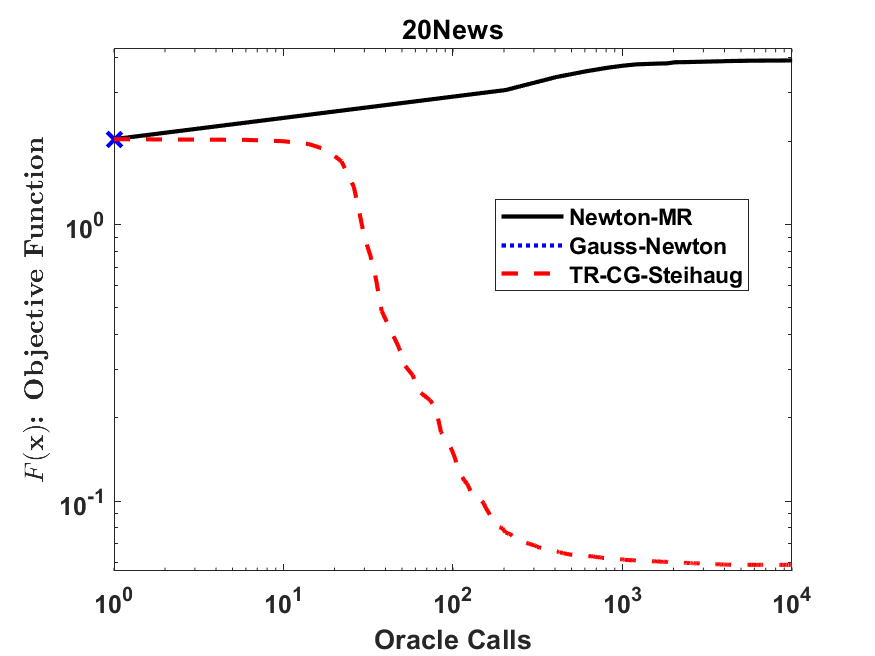}}
	\end{minipage}
	\begin{minipage}[htbp]{0.32\linewidth}
		\subfigure[\texttt{mnist} ($ \xxo = \zero $)]
		{\includegraphics[scale=0.36]{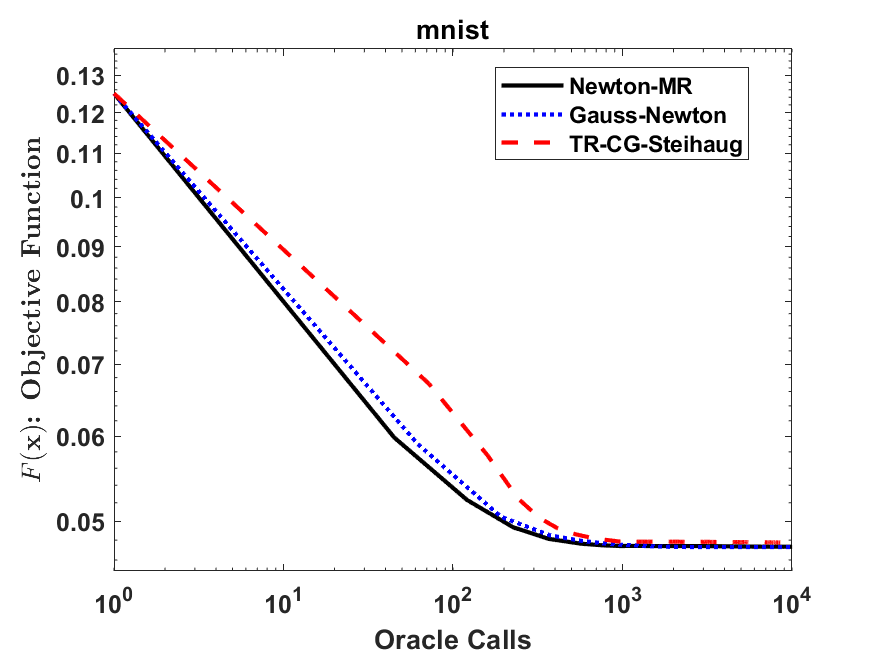}}
	\end{minipage}
	\begin{minipage}[htbp]{0.32\linewidth}
		\subfigure[\texttt{mnist} ($ \xxo = \one $)]
		{\includegraphics[scale=0.36]{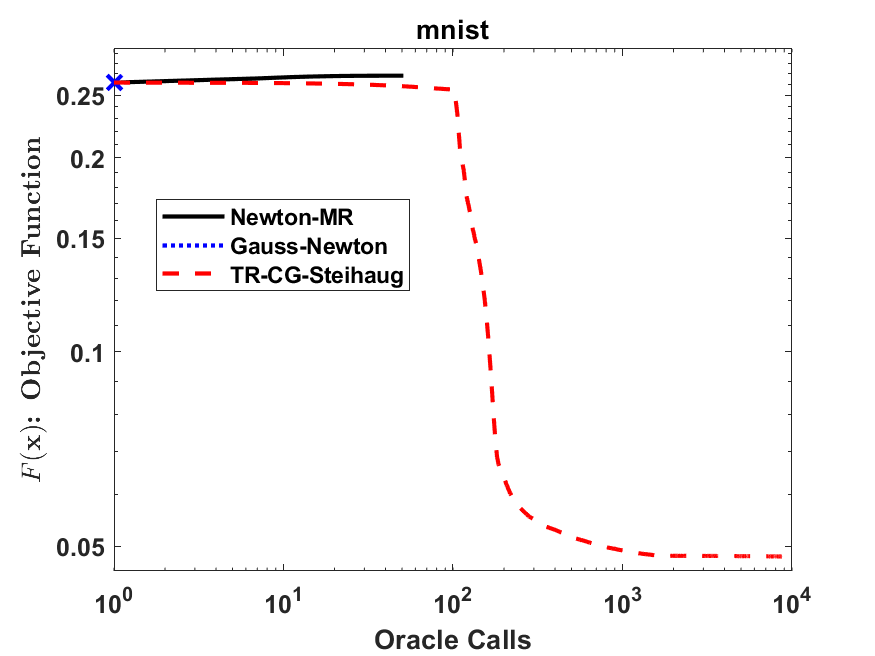}}
	\end{minipage}
	\begin{minipage}[htbp]{0.32\linewidth}
		\subfigure[\texttt{mnist} (random)]
		{\includegraphics[scale=0.36]{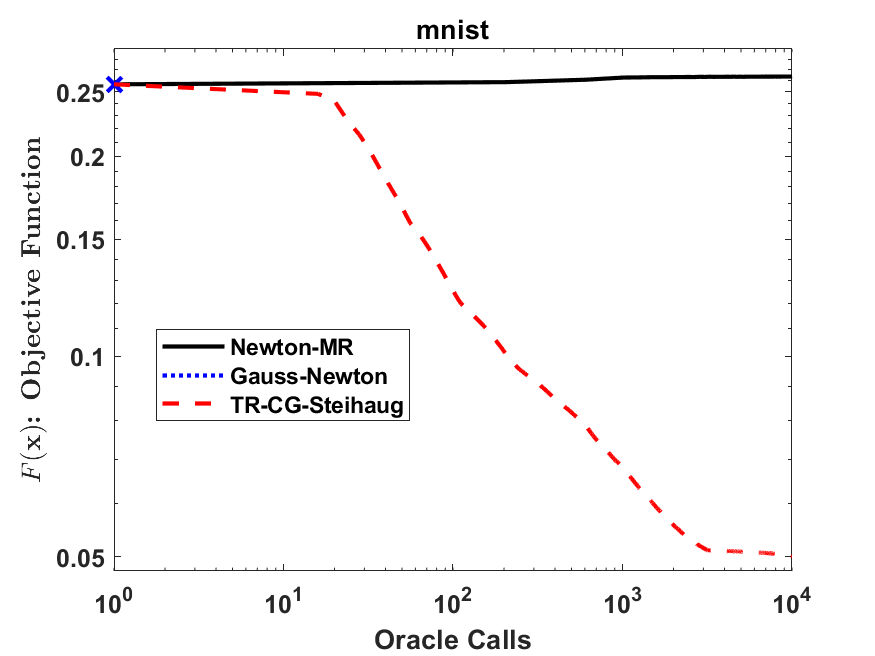}}
	\end{minipage}
	\begin{minipage}[htbp]{0.32\linewidth}
		\subfigure[\texttt{UJIIndoorLoc} ($ \xxo = \zero $)]
		{\includegraphics[scale=0.36]{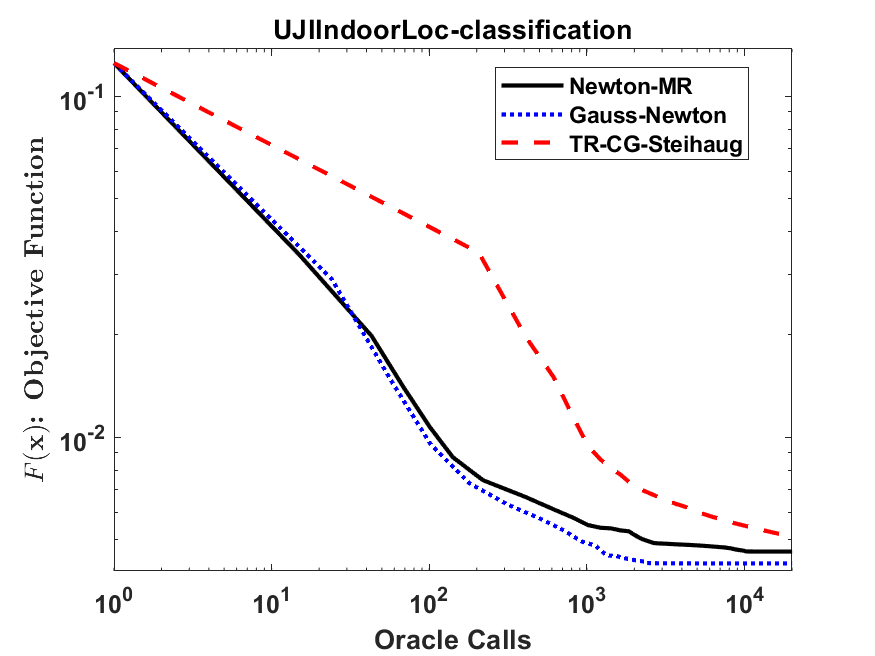}}
	\end{minipage}
	\begin{minipage}[htbp]{0.32\linewidth}
		\subfigure[\texttt{UJIIndoorLoc} ($ \xxo = \one $)]
		{\includegraphics[scale=0.36]{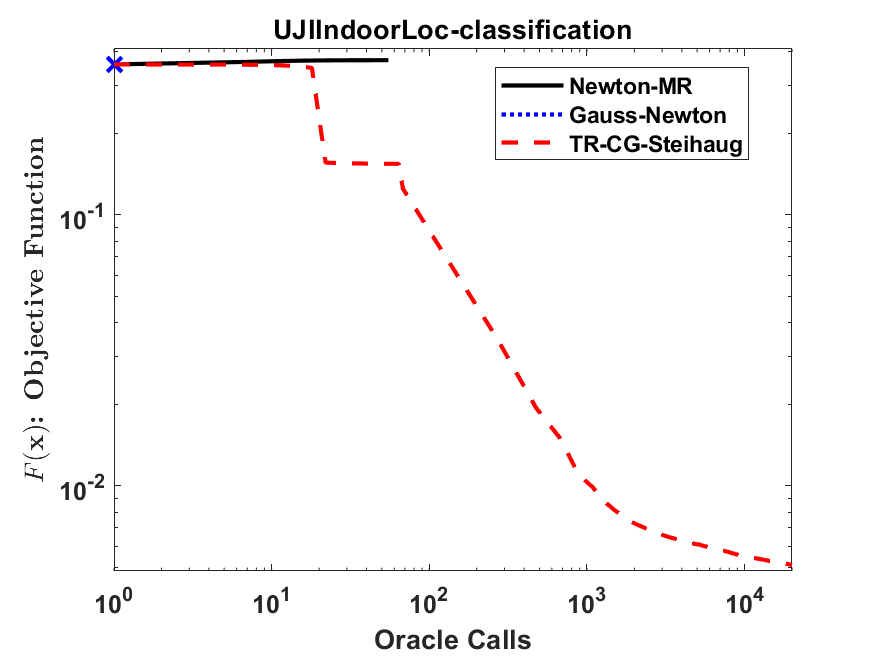}}
	\end{minipage}
	\begin{minipage}[htbp]{0.32\linewidth}
		\subfigure[\texttt{UJIIndoorLoc} (random)]
		{\includegraphics[scale=0.36]{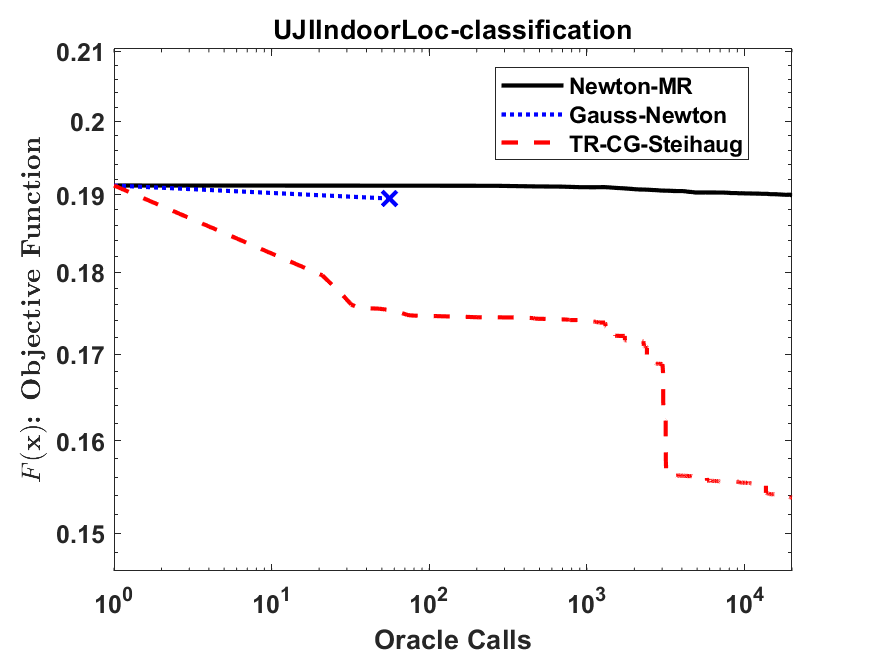}}
	\end{minipage}
	\caption{\small Comparison of the Newton-MR, Gauss-Newton, and trust-region algorithms for a highly non-invex optimization problem, namely the problem \cref{eq:NLS} on three datasets from \cref{table:softmax_data}. We consider three initialization strategies: $ \xx_{0} = \zero $, $ \xxo = \one $, and one that is randomly drawn from the multivariate normal distribution with mean zero and identity covariance. Since this problem is highly non-invex, with unfortunate initialization, Newton-MR can converge to saddle points or even local maxima, whereas the trust-region algorithm, by leveraging the negative curvature direction arising as part of the CG-Steihaug iterations, can navigate its way out of these undesirable regions. The mark ``$ \bm{\times} $'' indicates the Gauss-Newton iteration where the CG method fails due to the indefiniteness of the underlying approximate Hessian matrix.  \label{fig:nnls}}
\end{figure}

It is clear that, when the optimization landscape is somewhat less rugged near the initialization, e.g., the origin on most of the datasets, Newton-MR (as well as Gauss-Newton) can perform reasonably well. However, near more ``hostile'' regions with a high degree of non-convexity, Newton-MR fails to find a good solution and can converge to saddle points or local maxima. In such highly non-convex regions, the matrix used within the Gauss-Newton iterations can also become indefinite, leading to the underlying CG iterations to fail in which case the Gauss-Newton method is terminated (indicated by ``$ \bm{\times} $''). In contrast, by leveraging the negative curvature directions, the trust-region method can reliably converge to a local minima.

Experiments such as those depicted in \cref{fig:nnls} highlight the need for further refinement of the vanilla Newton-MR method studied in this paper to design more general purpose variants, that just like trust-region, can leverage the negative curvature directions that could arise as part of the inner solver. This way, one can extend \cref{alg:NewtonMR_Ex,alg:NewtonMR} far beyond invex problems for arbitrary non-convex objectives. We leave this important venture to future work.

\section{Conclusions}
\label{sec:conclusion}

Motivated to extend the simplicity of the iterations of Newton-CG to beyond strongly convex problems, we consider an alternative algorithm, called Newton-MR, which can be readily used for unconstrained optimization of invex objectives. The iterations of Newton-MR merely involve ordinary least-squares problems, which are (approximately) solved by minimum residual iterative methods, followed by Armijo-type line-search to determine the step-size. We show that under mild assumptions and weak inexactness conditions for sub-problems, we can obtain a variety of local/global convergence results for Newton-MR.

\paragraph{Acknowledgments.}
We are thankful to Prof.\ Sou-Cheng Choi, Prof.\ Michael Saunders and Dr. Ron Estrin for their invaluable help and advice. 
F. Roosta was partially supported by the Australian Research Council through a Discovery Early Career Researcher Award (DE180100923) as well as an Industrial Transformation Training Centre for Information Resilience (IC200100022).

%
%

\appendix
\section{More on \cref{assmpt:null_space}}
This section contains some more examples and insights relating to \cref{assmpt:null_space}.
 
\subsection{\cref{assmpt:null_space} with $ \nu < 1 $}
\cref{example:erm} gives a function that satisfies \cref{assmpt:null_space} with $ \nu = 1 $. We now provide a non-trivial example of a function that satisfies \cref{assmpt:null_space} with $ \nu < 1 $. 
Consider the fractional programming problem of the form 
\begin{align*}
	\min_{(x_{1},x_{2}) \in \mathcal{X}} f(x_{1},x_{2}) = \frac{a x_{1}^2}{b - x_{2}},
\end{align*}
where $ \mathcal{X} = \big\{(x_{1}, x_{2}) \;|\; x_{1} \in \mathbb{R}, \; x_{2} \in (-\infty,b) \cup (b, \infty)\big\} $. The Hessian matrix and the gradient of f can, respectively, be written as
\begin{align*}
	\bgg(\xx) = \left(\begin{array}{c}
		\frac{-2 a x_{1}}{x_{2}-b} \\ \\
		\frac{a x_{1}^{2}}{(x_{2}-b)^{2}}
	\end{array} \right),\quad 	
	\HH(\xx) = \left(\begin{array}{cc}
		\frac{- 2 a }{x_{2} - b} & \frac{2 a x_{1}}{(x_{2}-b)^{2}} \\ \\
		\frac{2 a x_{1}}{(x_{2}-b)^{2}} & \frac{- 2 a x_{1}^{2}}{(x_{2}-b)^{3}}
	\end{array} \right).
\end{align*}
Hence, for a given $\xx$, the range and the null-space of $\HH(\xx)$ are, respectively, given by
\begin{align*}
	\uu_{\xx} = \left(\begin{array}{c}
		-1 \\ \\
		\frac{x_{1}}{x_{2} - b}
	\end{array} \right), \quad 
	\uu^{\perp}_{\xx} = \left(\begin{array}{c}
		\frac{x_{1}}{x_{2} - b} \\ \\
		1
	\end{array} \right).
\end{align*}
Now, we get 
\begin{align*}
	\left|\dotprod{\uu_{\xx},\bgg(\xx)}\right| &= \left|\frac{2 a x_{1}}{x_{2}-b} + \frac{a x_{1}^{3}}{(x_{2}-b)^{3}}\right|, \quad \left|\dotprod{\uu^{\perp}_{\xx},\bgg(\xx)}\right| = \frac{a x_{1}^{2}}{(x_{2}-b)^{2}},
\end{align*}
and
\begin{align*}
	\frac{|\dotprod{\uu_{\xx},\bgg(\xx)}|}{|\dotprod{\uu^{\perp}_{\xx},\bgg(\xx)}|} =  \left|\frac{2 (x_{2}-b)}{x_{1}} + \frac{x_{1}}{(x_{2}-b)}\right| \geq 2 \sqrt{2}, \quad \forall \xx \in \mathcal{X},
\end{align*}
where the lower bound is obtained by considering the minimum of the function $h(y) = {2}/{y} + y$ for $y = {x_{1}}/{(x_{2}-b)}$. Hence, it follows that $\nu = 8/9$. Clearly, $ f $ is unbounded below and, admittedly, this example is of little interest in optimization. Nonetheless, it serves as a non-trivial example of functions that satisfy \cref{assmpt:null_space}.

\subsection{Composition of functions and \cref{assmpt:null_space}}
We now consider \cref{assmpt:null_space}  in the context of the composition of functions. More specifically, consider 
\begin{align}
	\label{eq:comp}
	f(\xx) = h(\rr(\xx)),
\end{align}
where $\rr:\mathbb{R}^{d} \rightarrow \mathbb{R}^{p}$ and $h: \mathbb{R}^{p} \rightarrow \mathbb{R}$ are smooth functions. We have 
\begin{align*}
	\bgg(\xx) \defeq \nabla f(\xx)  &= \JJ^{\transpose}_{\rr} (\xx) \cdot  \nabla h(\rr(\xx)), \\
	\HH(\xx) \defeq \nabla^{2} f(\xx) &= \JJ^{\transpose}_{\rr}(\xx) \cdot  \nabla^2 h(\rr(\xx)) \cdot  \JJ_{\rr}(\xx) + \partial\JJ_{\rr}(\xx) \cdot  \nabla h(\rr(\xx)).
\end{align*}
where $\JJ_{\rr}  (\xx) \in \real^{p \times d}$ and $\partial \JJ_{\rr}(\xx) \in\real^{d\times d\times p}$ are, respectively, the Jacobian and the tensor of all second-order partial derivatives of $ \rr $, and $\nabla h(\rr(\xx)) \in\real^p$, and $\nabla^2 h(\rr(\xx)) \in\real^{p\times p}$ are the gradient and Hessian of $ h $, respectively. \cref{lemma:perturbation} gives a sufficient condition for $ f $ to satisfy \cref{assmpt:null_space}.

\begin{lemma}
	\label{lemma:perturbation}
	Suppose \cref{assmpt:pseudo_regularity} holds, $\nabla^2 h(\rr(\xx))$ is full rank, and also $\textnormal{rank}(\HH(\xx)) \geq \textnormal{rank}(\JJ_{\rr} (\xx))$. If for any $ \xxo \in \real^{d}$, there is some $\nu(\xxo) \in (0,1]$, such that
	\begin{align}
		\label{eq:gn_nu}
		\vnorm{\partial \JJ_{\rr}(\xx) \cdot \nabla h(\rr(\xx))} \le \frac{\gamma(\xxo) \sqrt{1 - \nu(\xxo)}}{2}, \quad \forall \xx \in \mathcal{X}_{0}, 
	\end{align}
	then Assumption \ref{assmpt:null_space} holds with $ \nu(\xxo) $. Here, $\gamma(\xxo) $ is as in \cref{assmpt:pseudo_regularity}.
	
\end{lemma}

\begin{proof}
	Let $ \xxo \in \real^{d} $ be given and take any $ \xx \in \mathcal{X}_{0} $. Define $ \BB \triangleq \AA + \EE $ where
	\begin{align*}
		\BB & = \HH(\xx),\;	\quad \AA =  \JJ^{\transpose}_{\rr}(\xx) \cdot  \nabla^2 h(\rr(\xx)) \cdot \JJ_{\rr}(\xx), \quad	\text{ and } \quad \EE = \partial \JJ_{\rr}(\xx) \cdot \nabla h(\rr(\xx)).
	\end{align*}
	Let $ \UU_{\AA} $ and $ \UU_{\BB} $ be, respectively, arbitrary orthogonal bases for $ \text{Range}(\AA)$ and $\text{Range}(\BB) $ and denote $ \UU^{\perp}_{\AA} $ and $ \UU^{\perp}_{\BB} $ as their respective orthogonal complement. 
	
	By assumption, we have $ r_{\AA} = \text{rank}(\AA) \leq \text{rank}(\BB) = r_{\BB} $. Let $ \widehat{\UU}_{\AA} \in \real^{d \times r_{\BB}}$ be $ \UU_{\AA} $ augmented by any $ r_{\BB} - r_{\AA} $ vectors from $ \UU^{\perp}_{\AA} $ such that $ \text{rank}(\widehat{\UU}_{\AA}) = \text{rank}(\UU_{\BB}) $. By the matrix perturbation theory applied to $\BB = \AA + \EE $, \cite[Theorem 19]{o2018random}, we have
	\begin{align*}
		\vnorm{\widehat{\UU}_{\AA}\widehat{\UU}_{\AA}^{\intercal}  - \UU_{\BB}\UU_{\BB}^{\intercal} } \le \frac{2\vnorm{\EE}}{\sigma_{r_{\BB}}(\BB)} \le \frac{2 \vnorm{ \EE }}{\gamma(\xxo)} \le \sqrt{1 - \nu(\xxo)},
	\end{align*}
	where $ \sigma_{r_{\BB}}(\BB) $ is the smallest non-zero singular value of $ \BB $. This, in turn, implies (see \cite[Theorem 2.5.1]{golub2012matrix})
	\begin{align*}
		\vnorm{\left[\UU^{\perp}_{\BB}\right]^{\intercal} \widehat{\UU}_{\AA}}^2 = \vnorm{\widehat{\UU}_{\AA} \widehat{\UU}_{\AA}^{\intercal}  - \UU_{\BB}\UU_{\BB}^{\intercal} }^2 \le 1 - \nu(\xxo). 
	\end{align*}
	Now since $\nabla^2 h(\rr(\xx))$ is full rank, then $\text{Range}(\AA) = \text{Range}(\JJ^{\transpose}_{\rr}(\xx))$. Therefore $\bgg(\xx) = \UU_{\AA} \UU_{\AA}^{\intercal}  \bgg(\xx)$ and hence $ \UU^{\perp}_{\AA} \left[\UU^{\perp}_{\AA}\right]^{\intercal}  \bgg(\xx) = \bm{0} $, which implies $ \bgg(\xx) = \widehat{\UU}_{\AA} \widehat{\UU}_{\AA}^{\intercal}  \bgg(\xx) $ and $\vnorm{\bgg(\xx)} = \vnorm{\widehat{\UU}_{\AA}^{\intercal}  \bgg(\xx)}$. 
	Now we have
	\begin{align*}
		\vnorm{\left[\UU^{\perp}_{\BB}\right]^{\intercal} \bgg(\xx)}^2 & = \vnorm{\left[\UU^{\perp}_{\BB}\right]^{\intercal}  \widehat{\UU}_{\AA} \widehat{\UU}_{\AA}^{\intercal}  \bgg(\xx)}^2 
		\le \vnorm{\left[\UU^{\perp}_{\BB}\right]^{\intercal}  \widehat{\UU}_{\AA}}^2 \vnorm{\widehat{\UU}_{\AA}^{\intercal} \bgg(\xx)}^2 \\
		& \le \big(1 - \nu(\xxo)\big) \vnorm{\bgg(\xx)}^2 = \big(1 - \nu(\xxo)\big)\left(\vnorm{\UU_{\BB}^{\intercal} \bgg(\xx)}^2 + \vnorm{\left[\UU^{\perp}_{\BB}\right]^{\intercal} \bgg(\xx)}^2\right).
	\end{align*}
	Rearranging the terms above, we obtain
	\begin{align*}
		\vnorm{\left[\UU^{\perp}_{\BB}\right]^{\intercal}  \bgg(\xx)}^2 \le \frac{1 - \nu(\xxo)}{\nu(\xxo)} \vnorm{\UU_{\BB}^{\intercal} \bgg(\xx)}^2,
	\end{align*}
	which completes the proof.  
\end{proof}

\vspace{2mm}
The requirement for $\nabla^2 h(\rr(\xx))$ being full rank is satisfied in many applications, e.g., nonlinear least squares where $ h(\zz) = \vnorm{\zz}^{2}/2 $, statistical parameter estimations using negative log-likelihood where $ h(z) = - \log (z) $, or multi-class classifications using cross-entropy loss where $ h(\zz) = - \sum_{i=1}^{p} y_{i} \log(z_{i})  $ for some $ \sum_{i=1}^{p} y_{i} = 1 $ and $ y_{i} \geq 0 $. The requirement for $\textnormal{rank}(\HH(\xx)) \geq \textnormal{rank}(\JJ_{\rr} (\xx))$, however, might not hold for many problems. Nonetheless, \cref{lemma:perturbation} can give a qualitative guide to understanding the connection between Newton-MR and Gauss-Newton, when both are applied to $f(\xx) = h(\rr(\xx))$. 

\paragraph{Connection to Gauss-Newton.}
Suppose $ f $ in \cref{eq:comp} has isolated local minima, e.g., if $ \JJ_{\rr}(\xxs) $ is full-rank at a local minimum $ \xxs $. Define $ \bSS(\xx) \defeq \partial \JJ_{\rr}(\xx) \cdot \nabla h(\rr(\xx)) $.
It is well-known that the convergence of Gauss-Newton is greatly affected by the magnitude of $ \vnorm{\bSS(\xxs)} $; see \cite[Section 7.2]{sun2006optimization}. Indeed, when $ \vnorm{\bSS(\xxs)} $ is large, Gauss-Newton can perform poorly, whereas small values of $ \vnorm{\bSS(\xxs)} $ implies Gauss-Newton matrix is a good approximation to the full Hessian. In fact, local quadratic convergence of Gauss-Newton can only be guaranteed when $ \vnorm{\bSS(\xxs)} = 0 $; otherwise, the convergence can degrade significantly. For example, if $ \vnorm{\bSS(\xxs)} $ is too large, the Gauss-Newton method may not even converge, while if $ \vnorm{\bSS(\xxs)} $ is small enough relative to $ \vnorm{\JJ^{\transpose}_{\rr}(\xxs) \cdot  \nabla^2 h(\rr(\xxs)) \cdot  \JJ_{\rr}(\xxs)} $, the convergence is linear. 

\cref{lemma:perturbation} allows us to, at least very qualitatively and superficially, compare the performance of Gauss-Newton with Newton-MR for the minimization of \cref{eq:comp}. 
Roughly speaking, \cref{lemma:perturbation} relates $ \vnorm{\bSS(\xxs)} $ to $ \nu $, which directly affects the performance of Newton-MR, e.g.,  see \cref{thm:exact} where larger values of $ \nu $ implies faster convergence for Newton-MR.  
More specifically, suppose $ \vnorm{\bSS(\xxs)} $ is small. Assume that $\bSS(\xxs)$ is smooth enough such that $ \vnorm{\bSS(\xx)} $ is also small for any $ \xx $ in some compact neighborhood of $ \xxs $, denoted by $ \sC $. Let $ \xxo $ be the point in this compact neighborhood such that $ \vnorm{\bgg(\xx)} \leq \vnorm{\bgg(\xxo)}, \; \forall \xx \in \sC $. With $ \sX_{0} $ defined by this $ \xxo $, consider \cref{eq:gn_nu}. Since the left hand side is assumed small, $ \nu(\xxo) $ can be taken large, which directly implies a better convergence rate for Newton-MR. 

In this light, we can expect Newton-MR to perform well whenever Gauss-Newton exhibits good behaviors. 
Our empirical evaluations of \cref{sec:gmm} indeed support this, while hinting at a possibility that the converse might not necessarily be true. 
Nonetheless, to make a fair and more accurate theoretical comparison, one needs to analyze the Gauss-Newton method under the assumptions of this paper (for example by drawing upon the ideas in \cite{eriksson2005regularization} related to the minimum-norm solution). This endeavor is left for future work.

\bibliographystyle{abbrv}
\bibliography{biblio}

\end{document}